\documentclass{article}
\usepackage{graphicx} % Required for inserting images

\usepackage{amsfonts,amsmath,amsthm,amssymb}
\usepackage{mathtools}
\usepackage[numbers,compress]{natbib}
\usepackage[]{geometry}
\usepackage{xcolor}
\usepackage{enumitem}
\usepackage{hyperref}

\theoremstyle{plain}
\newtheorem{lem}{Lemma}
\newtheorem{prop}[lem]{Proposition}
\newtheorem{assum}[lem]{Assumption}
\newtheorem{eg}[lem]{Example}
\newtheorem{theorem}[lem]{Theorem}
\newtheorem{remark}[lem]{Remark}

\newcommand{\R}{\mathbb{R}}
\newcommand{\Z}{\mathbb{Z}}
\renewcommand{\P}{\mathbb{P}}
\newcommand{\TV}[1]{\|#1\|^{\mathrm{TV}}}

\DeclareMathOperator{\sign}{sign}
\DeclareMathOperator{\Even}{Evn}

\usepackage{etoolbox}
\patchcmd{\bibliography}{volume}{Volume}{}{}
\makeatletter
\renewenvironment{proof}[1][\proofname] {\par\pushQED{\qed}\normalfont\topsep6\p@\@plus6\p@\relax\trivlist\item[\hskip\labelsep\bfseries#1\@addpunct{.}]\ignorespaces}{\popQED\endtrivlist\@endpefalse}
\makeatother

\numberwithin{equation}{section}
\numberwithin{lem}{section}

\usepackage[normalem]{ulem}  % in preamble; normalem keeps \emph as italic

\title{Folding representations of reflected diffusions}
\author{David Itkin\thanks{Department of Statistics, London School of Economics and Political Science, UK (E-mail: \href{mailto:d.itkin@lse.ac.uk}{d.itkin@lse.ac.uk})} \and  Ioannis Karatzas\thanks{Departments of Mathematics and Statistics, Columbia University, USA (E-mail: \href{mailto:ik1@columbia.edu}{ik1@columbia.edu})}}

\begin{document}

\maketitle

\begin{abstract}
    Diffusion processes with reflection, possibly oblique, on the boundary of a given sufficiently regular closed convex domain in Euclidean space, are constructed in a novel way via instantaneous transformations (``foldings”) of suitable unconstrained diffusions. 
\end{abstract}

\paragraph*{Keywords:}  Reflected stochastic differential equations $\cdot$ Reflected Brownian Motion $\cdot$ Folding representations $\cdot$ Semimartingale local time
$\cdot$ Oblique reflection

\paragraph*{MSC 2020 Classification:}
60H10 
$\cdot$ 60J60
$\cdot$ 60J65 
\section{Introduction}
This paper develops a novel approach to the classical problem of constructing solutions $Y_t\,, ~ 0 \le t < \infty $ to stochastic differential equations with values in a specified closed convex domain $D$ of Euclidean space $\R^d$  and  subject to reflecting conditions (possibly oblique) on its boundary $\partial D$, which we assume is of class $C^3_+$. The method proceeds by constructing an appropriate unconstrained diffusion $X_t\,, ~ 0 \le t < \infty $ on the entire space; then by ``folding this diffusion instantaneously” via a suitable mapping $F : \R^d \to D$; and then by ensuring that the folded process $Y_t = F(X_t), ~0 \le t < \infty $ obtained in this manner satisfies the original equation and boundary conditions. 

This novel approach is quite general, and holds considerable promise for the numerical simulation of diffusions with reflection in a way that does not require enforcing boundary conditions at each discretization step. We do not pursue such simulations here, but leave them for further investigation. 

\smallskip 

\noindent \emph{Preview}:  We develop the above methodology in a step-by-step manner, gaining knowledge and insights along the way. Section \ref{sec:one-dimension} treats the one-dimensional case, and deals successively with the non-negative half-line and reflection at the origin (Subsection~\ref{sec:half-line}), then with the unit interval and reflection at both endpoints (Subsection~\ref{sec:unit_interval}). The folding representation we establish takes $X$ to be a Brownian Motion with a carefully selected state-dependent drift.  Subsection~\ref{sec:one-dimension_Z} studies the case in which the stochastic differential equation specifying the scalar reflected diffusion $Y$ is coupled with a multivariate unconstrained diffusion. The positive orthant and the unit hypercube in $\R^d$ are treated respectively in Subsections~\ref{sec:orthant} and \ref{sec:hypercube} of Section~\ref{sec:multidimensional}, using systems of nonlinear ordinary differential equations and Frobenius-type conditions for their solvability. General convex domains with smooth  boundaries and with general, possibly oblique, reflection on them, are discussed in Section~\ref{sec:convex_domain}: there, we deal first with Brownian Motion and normal reflection on the unit ball (Subsection~\ref{sec:unit_ball}), then use this as a ``springboard” for treating oblique reflection on a general smooth, convex domain (Subsection~\ref{sec:convex_oblique}). The construction here decomposes the reflected diffusion $Y$ into two components: a scalar radial process with reflection and a second process, free of reflection terms, that diffuses on the boundary of the domain. Suitable flows of diffeomorphisms play here crucial roles.

\smallskip

\noindent \emph{Historical Overview}:  \citet{feller1952the,feller1954diffusion,feller1957generalized} achieved a complete classification of scalar diffusions according to their scale function, speed measure, and boundary behavior; see also \citet{ito1965diffusion}, particularly Section 5.7.  To the best of our knowledge, the study of reflected diffusions in several dimensions starts with \citet{wentzell1959on}, and proceeds with the seminal formulation and results of \citet{skorokhod1961stochastic,skorokhod1962stochastic} where bounded variation terms representing reflection on the boundary appear for the first time; see also \citet{watanabe1971on} and Section IV.7 in \citet{ikeda1981stochastic}. \citet{stroock1971diffusion} treat diffusions with reflection via suitable martingale problems; \citet{tanaka1979stochastic} extends the work of Skorokhod to general convex domains; \citet{lions1984stochastic} deal with smooth bounded domains using penalization methods, whereas \citet{saisho1987stochastic} extends their results to more general domains and reflection fields.  Brownian Motions with reflection on orthants and/or wedges are studied by \citet{harrison1981reflected,varadhan1985brownian,harrison1987brownian}, among others. Finally, \citet{dupuis1999convex} construct diffusions in convex polyhedral domains with oblique reflection via convex duality and so-called ``Skorokhod maps”, whose properties they develop.

\section{The one-dimensional case} \label{sec:one-dimension}
We start with the one-dimensional case, which is the easiest by far; and deal first with the positive half-line $I = [0,\infty)$ with reflection at the origin, then with the unit interval $I = [0,1]$ with inward reflection at each endpoint.
\subsection{The positive half-line} \label{sec:half-line}
Let us consider a scalar diffusion with reflection, characterized by the Reflected Stochastic Differential Equation (RSDE) 
\begin{equation} \label{eq:RSDE} 
dY_t = b(Y_t)dt + \sigma(Y_t)dW_t + d\Phi_t
\end{equation}
on the positive half line $I = [0,\infty)$,
where $W$ is a standard Brownian Motion.
The term $\Phi =(\Phi_t)_{t \ge 0}$ is the \emph{reflection} term, meaning that it satisfies $\Phi_0 = 0$, is a nonnegative, nondecreasing process, and its associated measure $d\Phi$ is supported on the set $\{t\geq 0: Y_t =0\}$. A solution to the equation \eqref{eq:RSDE} is a pair $(Y,\Phi)$ of processes on some filtered probability space $(\Omega,\mathcal{F}, (\mathcal{F}_t)_{t \geq 0},\P)$, such that the relationship \eqref{eq:RSDE} holds, $Y$ takes values in $I$ almost surely, $\Phi$ satisfies the conditions described above, and all processes $Y,\Phi,W$ in question are adapted to $(\mathcal{F}_t)_{t \geq 0}$. 

The question we wish to tackle is: When can we represent the reflected diffusion $Y$ as a transformation, or \emph{folding}, 
\begin{equation} \label{eq:goal}
    Y = F(X)
\end{equation} of a standard diffusion $X$ (meaning nonreflected) with constant dispersion coefficient, for a suitable \emph{folding function} $F:\R \to I$? 

The most widely studied RSDE of the type \eqref{eq:RSDE} is Reflected Brownian Motion, corresponding to $b \equiv 0$ and $\sigma \equiv 1$, in which case it is known that 
\[d|B|_t = \mathrm{sign}(B_t)dB_t + dL_t^0(B) = dW_t + dL_t^0(B)\]
holds when $B$ is a Brownian Motion. Here, $W = \int_0^\cdot \mathrm{sign}(B_t)dB_t$ is a Brownian Motion constructed from $B$, $L^0_t(B)$ is the Brownian local time accumulated at the origin over $[0,t]$, and $\mathrm{sign}(x) = 1_{(0,\infty)}(x) - 1_{(-\infty,0]}(x)$. 
We see that, with $F(x) = |x|$, the processes $Y = F(B)$ and $\Phi = L^0(B) = \frac{1}{2}L^0(Y)$, together with the Brownian Motion $W$ just constructed, satisfy \eqref{eq:RSDE}. 

We wish to study this question more generally and systematically, beyond the special choices $b \equiv 0$, $\sigma \equiv 1$ and Reflected Brownian Motion, by looking for a suitable folding function $F$ and for a diffusion $X$, such that the process $Y$ in \eqref{eq:RSDE} is given as \eqref{eq:goal}.

\subsubsection{Finding the right folding function}
To tackle this question we restrict attention to Stochastic Differential Equations (SDEs) of the type 
\begin{equation} \label{eq:X_SDE} 
dX_t = \alpha(X_t)dt + dB_t;
\end{equation}
namely, to Brownian Motions with state-dependent drift coefficient $\alpha:\R \to \R$ to be determined.

Next, inspired by the case of reflected Brownian Motion just discussed, we look for a folding function $F$ which is even; i.e., of the form $F(x) = f(|x|)$ for some function $f:I \to I$. The regularity of $f$ that we will need, is for it to be of class $C^1$ with $f'$ locally absolutely continuous with respect to the Lebesgue measure; that is, we look for a function $f$ which belongs to the Sobolev space 
$W^{2,1}_{\mathrm{loc}}(I)$.
The origin $x = 0$ will be the unique point of possible nondifferentiability of $F$, so we enforce the condition $f(0) = 0$ as this will ensure that the reflection term $\Phi$ we will construct is supported on the set $\{t\geq 0:Y_t = 0\}$.

For any function of this type, using the It\^o\,--Tanaka formula \cite[Theorem~VI.1.5]{revuz1999continuous}, we obtain
\begin{equation} \label{eq:Ito_X}
\begin{aligned}
    dF(X_t) & = F'_-(X_t)dX_t + \frac{1}{2}\int_{\R} L_t^a(X) F''(da)\\
    &  = \big(\sign(X_t)f'(|X_t|)\alpha(X_t) + \frac{1}{2}f''(|X_t|)\big)dt + \sign(X_t)f'(|X_t|)dB_t + f'(0)dL_t^0(X),
\end{aligned}
\end{equation}
where $L_t^a(X) = |X_t-a|-|X_0-a| - \int_0^t \mathrm{sign}(X_s-a)dX_s$ denotes the semimartingale local time of $X$ at $a \in \R$, and $F'_-(\text{resp., 
}F'_+)$  the left (resp., right) derivative of $F$. Here, we used $F''(\{0\}) = F'_+(0) - F'_-(0) = 2f'(0)$ to obtain the term involving the local time of $X$ at zero, and the fact that $f'$ is an absolutely continuous function so $F''(da) = f''(|a|)\, da$ holds on $\R\setminus \{0\}$. Indeed, this allowed us to conclude that  
\[\int_{\R\setminus \{0\}} L_t^a(X)F''(da) = \int_\R L_t^a(X)f''(|a|)da = \int_0^t f''(|X_s|)d[X]_s =
\int_0^t f''(|X_s|)ds,\] courtesy of the occupation times formula (see, e.g., \cite[Corollary~VI.1.6]{revuz1999continuous}).

On the other hand, if \eqref{eq:goal} holds, then we can rewrite \eqref{eq:RSDE} as
\begin{align}
\label{eq:dF_halfline}
dF(X_t) & = b\big(F(X_t)\big)dt + \sigma\big(F(X_t)\big)dW_t + d\Phi_t \\
& = b\big(f(|X_t|)\big)dt + \sigma\big(f(|X_t|)\big)\,\mathrm{sign}(X_t)dB_t + d\Phi_t. \nonumber
\end{align}
This is identical to the expression of \eqref{eq:Ito_X}, if the following equations are satisfied:
\begin{align}
    f'(y) & = \sigma\big(f(y)\big), & \text{for } y \geq 0, \label{eq:nonlinear_ODE}\\
     b\big(f(|x|)\big) & = \sign(x)f'(|x|)\alpha(x) + \frac{1}{2}f''(|x|), & \text{for } x \in \R, \label{eq:drift_equation} \\
    \Phi_t & = f'(0)L_t^0(X) , & t\geq 0.\label{eq:Phi_representation}
\end{align}

We conclude (at least formally) that it suffices to find a function $f:I \to I$ belonging to $W^{2,1}_{\mathrm{loc}}(I)$, which satisfies \eqref{eq:nonlinear_ODE} and $f'>0$. Indeed, if this is accomplished, then \eqref{eq:drift_equation} is satisfied with
\begin{equation} \label{eq:alpha_implicit}
    \alpha(x) = \sign(x)\frac{b(f(|x|)) - \frac{1}{2}f''(|x|)}{f'(|x|)}, \qquad \text{for a.e. } x \in \R;
\end{equation} and \eqref{eq:Phi_representation}  provides then a representation for the reflection term. 

As such, we focus our attention on finding a solution to the nonlinear Ordinary Differential Equation (ODE) of \eqref{eq:nonlinear_ODE} with initial condition $f(0) =0$. 
The following result guarantees the existence of such an $f$, which also satisfies additional desirable properties under appropriate conditions on $\sigma$. In particular, we will need to assume that $\sigma$ is a locally absolutely continuous function, so that the representation
\begin{equation} \label{eq:sigma_abs_cont}
    \sigma(y) = \sigma(0) + \int_0^y \sigma'(u)du
\end{equation}holds for all $y \geq 0$ and some locally integrable $\sigma':I \to \R$.   
\begin{lem} \label{lem:ODE}
    Let $\sigma:I \to (0,\infty)$ be a locally absolutely continuous function with $\int_0^\infty \frac{1}{\sigma(u)}du = \infty$.  
    Then the unique solution $f$ to equation \eqref{eq:nonlinear_ODE} with initial condition $f(0) = 0$ is given by the inverse function $f = g^{-1}$ of 
     \begin{equation} \label{eq:g}
        g(\xi) := \int_0^\xi \frac{1}{\sigma(u)}du, \qquad \forall\,  \xi \in I.
    \end{equation}
 Moreover, this solution satisfies 
    \begin{enumerate}[label = (\roman*),noitemsep]
        \item \label{item:f} $f \geq 0$ and $f\in W^{2,1}_{\mathrm{loc}}(I)$;
        \item \label{item:f'} $f$ is strictly increasing with $f'(0) = \sigma(0)$;
        \item \label{item:f''} $f''(y) = \sigma'(f(y))f'(y)$ for a.e.\ $y \in I$;
        \item \label{item:f_lim} $\lim_{y \to \infty} f(y) = \infty$.
    \end{enumerate}
\end{lem}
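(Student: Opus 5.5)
The plan is the classical separation-of-variables argument. Since $\sigma$ is continuous and strictly positive on $I$, the function $1/\sigma$ is continuous and positive. Hence $g$ in \eqref{eq:g} is $C^1$ on $I$, with $g'(\xi)=1/\sigma(\xi)>0$, $g(0)=0$, and $g(\xi)\to\infty$ as $\xi\to\infty$ by the hypothesis $\int_0^\infty du/\sigma(u)=\infty$. So $g$ is a strictly increasing $C^1$ bijection of $I=[0,\infty)$ onto itself. Its inverse $f=g^{-1}:I\to I$ is therefore well defined, strictly increasing, nonnegative, and satisfies $f(0)=0$ and $f(y)\to\infty$ as $y\to\infty$. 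This already gives \ref{item:f_lim} and the first parts of \ref{item:f} and \ref{item:f'}. By the inverse function theorem in one variable, $f$ is $C^1$ with
\[
f'(y)=\frac{1}{g'(f(y))}=\sigma\big(f(y)\big),
\]
so $f$ solves \eqref{eq:nonlinear_ODE} with $f(0)=0$, and $f'(0)=\sigma(0)$.

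For uniqueness, let $h$ be any $C^1$ solution of $h'=\sigma(h)$ with $h(0)=0$ and values in $I$. Then $h'>0$ and $\frac{d}{dy}g(h(y))=h'(y)/\sigma(h(y))=1$. Hence $g(h(y))=y$, that is, $h=g^{-1}=f$. This is the usual trick that avoids any Lipschitz assumption on $\sigma$: positivity of $\sigma$ alone makes the chain-rule computation legitimate.

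The remaining point, and the only mildly delicate one, is the second-order regularity in \ref{item:f} and the formula \ref{item:f''}. We have $f'=\sigma\circ f$, where $\sigma$ is locally absolutely continuous and $f$ is $C^1$ and strictly increasing. A composition of an absolutely continuous function with a monotone absolutely continuous (here $C^1$) function is absolutely continuous, and the chain rule holds almost everywhere. To keep this self-contained I would use \eqref{eq:sigma_abs_cont} and the change of variables $u=f(v)$, which is valid because $f$ is $C^1$ and increasing:
\[
\sigma\big(f(y)\big)=\sigma(0)+\int_0^{f(y)}\sigma'(u)\,du=\sigma(0)+\int_0^{y}\sigma'\big(f(v)\big)f'(v)\,dv .
\]
The integrand is locally integrable, since its integral of $|\cdot|$ over $[0,y]$ equals $\int_0^{f(y)}|\sigma'(u)|\,du<\infty$. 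Thus $f'$ is locally absolutely continuous with $f''(y)=\sigma'(f(y))f'(y)$ for a.e.\ $y$, which gives $f\in W^{2,1}_{\mathrm{loc}}(I)$ and \ref{item:f''}. The change-of-variables step for a merely integrable $\sigma'$ is where care is needed. I would justify it by first checking it for indicators of intervals, which is immediate from the fundamental theorem of calculus for $f$, and then extending by a monotone class argument, or simply by citing the standard substitution formula for monotone absolutely continuous maps.
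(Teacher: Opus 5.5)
Your proposal is correct and follows essentially the same route as the paper: separation of variables gives $f=g^{-1}$ with $f'=\sigma(f)>0$, and item (iii) comes from absolute continuity of $f'=\sigma\circ f$ together with the a.e.\ chain rule. The only difference is that you make two steps explicit, namely uniqueness via $\frac{d}{dy}g(h(y))=1$ and the chain rule via the substitution formula for the locally integrable $\sigma'$, where the paper simply invokes separation of variables and cites a composition result for absolutely continuous functions.
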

\begin{proof}
    Under the stated conditions on $\sigma(\cdot)$, the separation of variables technique shows that the unique solution $f$ to \eqref{eq:nonlinear_ODE} with initial condition $f(0) = 0$, is given by the inverse $f = g^{-1}$ of the continuous, strictly increasing function $g$ of equation \eqref{eq:g}. 
    Since $\sigma$ is positive, we see that $f'(y) = \sigma(f(y)) > 0$. Sending $y \downarrow 0$ yields the expression $f'(0) = \sigma(f(0)) = \sigma(0)$, which establishes item \ref{item:f'}. Moreover, the composition of a locally absolutely continuous function (such as $\sigma$ here) with an increasing function (such as $f$ here) is again locally absolutely continuous (see, e.g., \cite[Corollary~3.65]{leoni2009a}), and the chain rule holds, establishing items \ref{item:f} and \ref{item:f''}. Finally, item \ref{item:f_lim} follows from the representation $f = g^{-1}$ and the fact that $\lim_{\xi \to \infty} g(\xi) = \int_0^\infty \frac{1}{\sigma(u)}du = \infty$ by assumption.
    \end{proof}
 Substituting the expressions for $f'$ in \eqref{eq:nonlinear_ODE}, and for $f''$ in Lemma~\ref{lem:ODE}\ref{item:f''}, into the formula for the drift function $\alpha$ given by \eqref{eq:alpha_implicit}, yields
\begin{equation} \label{eq:alpha_explicit}
    \alpha(x) = \mathrm{sign}(x)\,\zeta\big(f(|x|)\big) \qquad \text{for a.e. } x \in \R,
\end{equation} 
where
\begin{equation} \label{eq:zeta_halfline}
    \zeta (y) := \frac{b(y)}{\sigma(y)} - \frac{1}{2}\sigma'(y), \qquad  y \in I.
\end{equation}
We are now ready to state the main result of this section.  

\begin{theorem} \label{thm:main_half_line}
    Consider functions $b:I \to \R$ measurable, and $\sigma:I \to (0,\infty)$ satisfying the assumptions of Lemma~\ref{lem:ODE}. Suppose also that the function $\zeta$ of \eqref{eq:zeta_halfline} is bounded. Then,
    \begin{enumerate}[label = (\roman*),noitemsep]
        \item the function $f:I \to I$ defined by $f(y) = g^{-1}(y)$ for $y \in I$, with $g$ given by equation \eqref{eq:g}, belongs to $W^{2,1}_{\mathrm{loc}}(I)$, and is the unique solution to the ODE \eqref{eq:nonlinear_ODE} satisfying the initial condition $f(0) = 0$; 
        \item  \label{item:SDE_halfline}the SDE \eqref{eq:X_SDE},
        with drift function $\alpha(\cdot)$ given by \eqref{eq:alpha_explicit},
        has a pathwise unique, strong solution $X$ for every initial value $X_0 \in \R$; and, 
        \item  with $X$ the solution process in \ref{item:SDE_halfline}, the process $Y = f(|X|)$ satisfies the RSDE \eqref{eq:RSDE} with initial condition $Y_0 = f(|X_0|)$, nondecreasing reflection process $\Phi = \sigma(0)L^0(X)$, and  Brownian Motion $W = \int_0^\cdot \mathrm{sign}(X_t)dB_t$. 
    \end{enumerate}  
\end{theorem}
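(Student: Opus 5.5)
Part (i) is immediate from Lemma~\ref{lem:ODE}. That lemma gives uniqueness of the solution to \eqref{eq:nonlinear_ODE} with $f(0)=0$ and the identification $f=g^{-1}$. Items \ref{item:f} and \ref{item:f_lim} give $f\in W^{2,1}_{\mathrm{loc}}(I)$ and show that $f$ maps $I$ onto $I$: $f$ is continuous, strictly increasing, $f(0)=0$ and $f(\infty)=\infty$. So there is nothing more to prove for (i) beyond citing the lemma.

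For part (ii), the drift $\alpha(x)=\sign(x)\,\zeta(f(|x|))$ of \eqref{eq:alpha_explicit} is measurable (a composition of the measurable $\zeta$ with the continuous $f(|\cdot|)$, times $\sign$) and bounded, because $\zeta$ is bounded by hypothesis. I would invoke Zvonkin's theorem (equivalently, Veretennikov's result) for one-dimensional SDEs with unit diffusion coefficient and bounded measurable drift. It yields a pathwise unique strong solution of \eqref{eq:X_SDE} for every $X_0\in\R$, with no explosion since the drift is bounded. Alternatively, one can get weak existence via Girsanov (Novikov holds since $\alpha$ is bounded), then pathwise uniqueness via Nakao's or Le Gall's criterion (constant $\sigma\equiv1$, bounded drift), and conclude with Yamada--Watanabe. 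The only care needed is that $\alpha$ is defined only a.e.; I would fix any measurable version, e.g.\ through a fixed version of $\sigma'$. The choice is irrelevant for the law and the occupation-time computations, since $X$ spends zero Lebesgue time on null sets.

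For part (iii), I would apply the It\^o--Tanaka computation \eqref{eq:Ito_X} to $F(x)=f(|x|)$, which is legitimate because $F$ is a difference of convex functions: $F'$ is locally absolutely continuous away from $0$ and has a jump of $2f'(0)$ at $0$. Into \eqref{eq:Ito_X} I would substitute $f'=\sigma\circ f$ from \eqref{eq:nonlinear_ODE}, $f''=(\sigma'\circ f)f'$ from Lemma~\ref{lem:ODE}\ref{item:f''}, and $\alpha$ from \eqref{eq:alpha_explicit}. Using $\sign(x)^2=1$, the $dt$-coefficient becomes $f'(|X_t|)\zeta(f(|X_t|))+\tfrac12\sigma'(f(|X_t|))f'(|X_t|)=b(Y_t)$ for $t$ outside a Lebesgue-null set of times. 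Here two facts are used. First, the set $\{t: X_t\in N\}$ has zero Lebesgue measure for any Lebesgue-null $N$, by the occupation times formula with $d[X]_t=dt$; this handles the a.e.\ identities. Second, the set $\{t: X_t=0\}$ is Lebesgue-null for the same reason, so the value of $\sign$ at $0$ does not matter.

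The martingale part is $\sigma(Y_t)\sign(X_t)\,dB_t=\sigma(Y_t)\,dW_t$, where $W=\int_0^\cdot\sign(X_t)\,dB_t$ is a Brownian Motion by L\'evy's characterization, since $[W]_t=t$. The local-time term is $f'(0)\,dL^0_t(X)=\sigma(0)\,dL^0_t(X)=d\Phi_t$. The process $\Phi=\sigma(0)L^0(X)$ starts at $0$, is nondecreasing because $\sigma(0)>0$, and has $d\Phi$ supported on $\{X_t=0\}=\{Y_t=0\}$, using that $f$ is strictly increasing with $f(0)=0$. Finally, $Y=f(|X|)\in I$, and all processes are adapted to the filtration of $(X_0,B)$.

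The main obstacle I expect is the careful justification of the generalized It\^o formula with $f$ only in $W^{2,1}_{\mathrm{loc}}$, together with the a.e.-defined coefficients $\alpha$ and $\sigma'$. I would handle both through the occupation times formula as sketched above. The existence and uniqueness in (ii) is a standard citation once boundedness of $\zeta$ is noted.
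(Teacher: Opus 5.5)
Your proposal is correct and follows essentially the same route as the paper. Part (i) is read off from Lemma~\ref{lem:ODE}. Part (ii) follows from boundedness of $\alpha$ via a standard strong-existence and pathwise-uniqueness result for bounded measurable drift with unit diffusion; the paper cites \cite[Proposition~5.5.17]{karatzas1998brownian}. Part (iii) is the It\^o--Tanaka computation \eqref{eq:Ito_X} with $f'=\sigma\circ f$ and $f''=(\sigma'\circ f)f'$ substituted. Your extra care with the a.e.-defined coefficients, via the occupation times formula, and with justifying It\^o--Tanaka for a difference of convex functions, makes explicit details the paper leaves implicit.
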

\begin{proof}
    The first claim is just a restatement of Lemma~\ref{lem:ODE}. For the second claim, the boundedness assumption on $\zeta(\cdot)$ ensures that the function $\alpha(\cdot)$ given by \eqref{eq:alpha_explicit} is bounded. Hence, \cite[Proposition~5.5.17]{karatzas1998brownian} ensures that the SDE \eqref{eq:X_SDE} has a pathwise unique, strong solution. Finally, an application of the It\^o\,--Tanaka formula as in \eqref{eq:Ito_X}, together with the expressions for $f'$ and $f''$ in Lemma~\ref{lem:ODE}, establishes the third claim. Note that, since $f(0) = 0$, the measure $d\Phi$ is indeed supported on the set $\{t\geq 0:Y_t = 0\}$, as required.
\end{proof}
\begin{remark} \label{rem:bounded_drift}
    Inspecting the proof, we see that the requirement that $\zeta$ of \eqref{eq:zeta_halfline} be a bounded function, can be replaced by any other condition which ensures global existence of a strong solution to the SDE \eqref{eq:X_SDE} with drift coefficient $\alpha$ given by \eqref{eq:alpha_explicit}.
\end{remark}
\begin{remark} 
    If one wants to match a particular initial condition $Y_0 = y \in I$, then the SDE \eqref{eq:X_SDE}, with drift function $\alpha(\cdot)$ given by \eqref{eq:alpha_explicit}, should be initiated at $X_0 = g(y)$ or $X_0 = - g(y)$. 
\end{remark}

We conclude this subsection with some examples. 
\begin{eg} \label{eg:half-line}
    \begin{enumerate}
        \item The case $\sigma(y) = 1$ and $b(y) = \beta \in \R$ corresponds to Reflected Brownian Motion with drift leading to $f(y) = y$, $F(x) = |x|$, and the SDE \eqref{eq:X_SDE} becomes
        \[dX_t = \beta \,\mathrm{sign}(X_t)dt + dB_t.\] 
        This diffusion, known as \emph{Brownian Motion with Bang--Bang Drift}, is studied in detail in \cite[Section~6.5]{karatzas1998brownian}.
        \item The case $\sigma(y) = 1+y$ and $b(y) = 0$ leads to the solution $f(y) = e^{y}-1$, $F(x) = e^{|x|}-1$, and the SDE \eqref{eq:X_SDE} becomes
        \[dX_t = - \tfrac{1}{2}\mathrm{sign}(X_t)dt + dB_t. \]
        This is the same SDE as in item 1 when $\beta = -1/2$, although the folding function $f$ and, consequently, the reflected diffusion $Y$ differ.
        \item The case $\sigma(y) = \sqrt{1+y^2}$ and $b(y) = 0$ leads to the solution $f(y) = \sinh(y)$, $F(x) = \sinh(|x|)$, and the SDE \eqref{eq:X_SDE} becomes
        \[dX_t = - \tfrac{1}{2}\mathrm{sign}(X_t)\tanh(|X_t|)dt + dB_t.\]
        \end{enumerate}
\end{eg}

\subsection{The unit interval} \label{sec:unit_interval}

Here we study an analogous problem when $Y$ satisfies \eqref{eq:RSDE} on the unit interval $I = [0,1]$. It is clear that if this case is handled, then by a simple scaling and translation argument we can handle any nonempty and nondegenerate bounded interval $[a,b]$. The main difference in this new setup, from the one of the previous subsection, is the existence of the second reflecting boundary point at $y=1$. Accordingly, we adjust the requirements on the reflection term $\Phi$ in \eqref{eq:RSDE} by requiring that it be continuous, of finite variation on compact intervals, carried on the set $\{t\geq 0:Y_t \in \{0,1\}\}$, and inward-pointing (i.e., $\int_0^\cdot 1_{\{Y_t = 0\}}d\Phi_t$ is increasing and $\int_0^\cdot 1_{\{Y_t = 1\}}d\Phi_t$ is decreasing). 

Unlike the positive half-line case, where defining $F$ as the even extension of $f$ was natural, the structure of a candidate function $F$ is less obvious here, due to the second boundary point at $y=1$.

\subsubsection{Reflected Brownian Motion} \label{sec:unit_interval_RBM}

To gain some inspiration, we consider first the case of Reflected Brownian Motion on the unit interval, corresponding again to $b \equiv 0$ and $\sigma \equiv 1$. We start by introducing the folding function
\begin{equation} \label{eq:unit_interval_F}
\begin{aligned} F(x)
& = |x-\Even(x)|, \qquad x \in \R,
\end{aligned}
\end{equation}
where $\Even(x)$ returns the closest even integer to the real number $x$; that is,
$\Even(x) = 2 \lfloor \frac{x+1}{2}\rfloor.$ Loosely speaking, the function $F$ in \eqref{eq:unit_interval_F} will allow us to implement Lord Kelvin's \emph{method of images} in this new context.

We  demonstrate now that $Y = F(B)$, where $B$ is Brownian Motion, leads to a weak solution to \eqref{eq:RSDE} with $b \equiv 0$ and $\sigma \equiv 1$.
The function $F(\cdot)$ is a triangular wave with amplitude one and period two (see Figure~\ref{fig:exampleF} below). Since $\Even(x)$ is constant on each interval $(n,n+1)$, $n \in \Z$, it is easy to verify that $F \in C^2(\R\setminus \mathbb{Z})$, $0 \leq F(x) \leq 1$, 
\[F'_-(x) = \sign\big(x-\Even(x)\big), \qquad x \in \R,\]
and $F''(x) = 0$ for all $x \in \R\setminus \mathbb{Z}$. 
Applying the It\^o\,--Tanaka formula to $F$ yields
\begin{align*} 
dF(B_t) & = F'_-(B_t)dB_t + \tfrac{1}{2}F''(B_t)dt + \tfrac{1}{2}\sum_{n \in \Z}\big(F'_+(n) - F'_-(n)\big)dL_t^n(B) \\
& = \sign\big(B_t-\Even(B_t)\big)dB_t + \sum_{n \in \Z} \big(dL_t^{2n}(B) - dL_t^{2n+1}(B)\big).
\end{align*}
For each $t \ge 0$, the infinite sum contains only finitely many non-zero terms, since $\sup_{s \in [0,t]} |B_s| < \infty$ holds for a.e.\ Brownian path. 

By comparing terms, we see that the process $Y = F(B)$ satisfies the equation \eqref{eq:RSDE} with $b(\cdot) \equiv 0$, $\sigma(\cdot) \equiv 1$,  standard Brownian Motion $W = \int_0^\cdot \sign(B_t-\Even(B_t))dB_t$, and reflection term 
\[\Phi= \sum_{n \in \Z} \big(L^{2n}(B) - L^{2n+1}(B)\big) = \tfrac{1}{2}L^0(Y) - \tfrac{1}{2}L^1(Y).\]
Here the second equality is readily deduced by applying Tanaka's formula to the processes $|Y|$ and $|Y-1|$ respectively, then matching the obtained local time terms of $Y$ to the reflection terms obtained using the folding function $F(\cdot)$.
  Note that this reflection process $\Phi$ is of finite first variation on compact intervals (being the difference of two nondecreasing, continuous and adapted processes) and, in particular, inward-pointing and supported on $\{t \ge 0: Y_t \in \{0,1\}\}$, as required.  
\subsubsection{General coefficients}
We now return to the problem for general drift $b$ and dispersion $\sigma$. Inspired by the above, and by the constructions in Subsection~\ref{sec:unit_interval_RBM}, we look for a folding function $F:\R \to I=[0,1]$ of the form
\[
F(x) = f\big(|x-\Even(x)|\big)
\]
with some function $f:I \to I$ to be determined. As before, we require $f(0) = 0$, but here we need to impose $f(1) = 1$ to ensure the range of $F$ is $[0,1]$. This additional constraint necessitates an extra degree of freedom. Hence, by analogy with \eqref{eq:X_SDE}, we will now look for diffusions of the type 
\begin{equation} \label{eq:X_SDE_nu}
    dX_t = \alpha(X_t)dt + \nu dB_t,
\end{equation}
once again Brownian Motions with state-dependent drift, where now both the drift function $\alpha:\R \to \R$ and the constant dispersion coefficient $\nu > 0$ need to be determined.
Applying the It\^o\,--Tanaka formula to the process $F(X)$ gives
\begin{align}
   \begin{aligned}
   dF(X_t) & = F'_-(X_t)dX_t + \tfrac{1}{2}\int_{\R} L_t^a(X) F''(da) \\
    & = \big(\sign(\overline X_t)f'(|\overline X_t|)\alpha(X_t) + \tfrac{1}{2}\nu^2f''(|\overline X_t|)\big)dt  +\nu \sign(\overline X_t)f'(|\overline X_t|)dB_t  \\
    & \qquad + \sum_{n \in \Z} \big(f'(0)dL_t^{2n}(X) - f'(1)dL_t^{2n+1}(X)\big), \label{eq:Ito_tanaka_interval}
    \end{aligned}
\end{align}
where we set $\overline X_t = X_t - \Even(X_t)$.
As in Subsection~\ref{sec:half-line}, the key equation that needs to be satisfied is obtained by comparing the dispersion coefficients in \eqref{eq:Ito_tanaka_interval} and in the analogue of \eqref{eq:dF_halfline} with Brownian Motion $W = \int_0^\cdot \sign(\overline X_t)dB_t$. Here, this comparison leads to the relationship 
\[\nu f'(y) = \sigma\big(f(y)\big).\] Thus, we need to find a function $f:I \to I$ and a constant $\nu >0$ such that the following hold:
\begin{equation} \label{eq:nonlinear_ODE_nu}
\begin{cases} 
f'(y) = \frac{1}{\nu} \sigma(f(y)), & y \in I,  \\
f(0) = 0, \quad f(1) = 1.
\end{cases}
\end{equation}
This is possible to do, as the next lemma shows.
\begin{lem} \label{lem:ODE_nu}
    Consider an absolutely continuous function $\sigma: I \to (0,\infty)$ as in \eqref{eq:sigma_abs_cont}, and the strictly increasing function $g:I \to [0,\infty)$ of \eqref{eq:g}. Then there exists a unique pair $(f,\nu)$, with $f:I \to I$ of class $C^1(I)$ and $\nu \in (0,\infty)$, which satisfies \eqref{eq:nonlinear_ODE_nu} as well as 
    \begin{enumerate}[label = (\roman*),noitemsep]
        \item \label{item:f_interval} $f(y) = g^{-1}(y/\nu)$ with $\nu =  (\int_0^1 \frac{1}{\sigma(u)}du)^{-1}$;
        \item \label{item:f''_interval} $f \in W^{2,1}(I)$ with $f''(y) = \frac{1}{\nu}\sigma'(f(y))f'(y)$ for a.e.\ $y \in I$;
        \item \label{item:f'_interval} $f'(y) > 0$ for $y \in I$, $f'(0) = \frac{\sigma(0)}{\nu}$ and $f'(1) =\frac{\sigma(1)}{\nu}$.
    \end{enumerate}
\end{lem}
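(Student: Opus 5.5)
We follow the route of Lemma~\ref{lem:ODE}: solve the ODE by separation of variables, which turns it into the inverse of $g$ rescaled by $\nu$, and then use the boundary condition $f(1)=1$ to fix $\nu$. Since $\sigma$ is absolutely continuous on the compact interval $I=[0,1]$ and strictly positive, it is bounded above and below by positive constants. Hence $g$ of \eqref{eq:g} is a strictly increasing $C^1$ bijection from $[0,1]$ onto $[0,g(1)]$, with $g(1)=\int_0^1 \frac{du}{\sigma(u)}\in(0,\infty)$ and $g'=1/\sigma>0$. Its inverse $g^{-1}:[0,g(1)]\to[0,1]$ is therefore $C^1$, with $(g^{-1})'=\sigma\circ g^{-1}$.

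\emph{Uniqueness and identification of $\nu$.} Suppose $(f,\nu)$ is a $C^1$ solution of \eqref{eq:nonlinear_ODE_nu}. Then $f'>0$, so $f$ is strictly increasing with values in $[0,1]$. Dividing by $\sigma(f(y))$ gives $\frac{d}{dy}\,g(f(y)) = f'(y)/\sigma(f(y)) = 1/\nu$. Integrating from $0$ with $f(0)=0$ yields $g(f(y))=y/\nu$, hence $f(y)=g^{-1}(y/\nu)$. For this to be defined on $I$ and to satisfy $f(1)=1$, we need $g(1)=1/\nu$; this forces $\nu=(\int_0^1 \frac{du}{\sigma(u)})^{-1}$, which is item \ref{item:f_interval} and gives uniqueness of the pair.

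\emph{Existence.} Conversely, with this $\nu$ set $f(y):=g^{-1}(y/\nu)$. The argument $y/\nu$ ranges over $[0,g(1)]$, so $f$ is well defined and maps $I$ onto $I$, with $f(0)=0$ and $f(1)=g^{-1}(g(1))=1$. The chain rule gives $f'(y)=\frac1\nu\sigma(f(y))$, which is continuous and positive, so $f\in C^1(I)$. Evaluating at the endpoints gives $f'(0)=\sigma(0)/\nu$ and $f'(1)=\sigma(1)/\nu$, which is item \ref{item:f'_interval}.

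\emph{Second derivative.} This is the only step needing care. We need $\sigma\circ f$ to be absolutely continuous with the chain rule holding a.e. As in Lemma~\ref{lem:ODE}, we invoke \cite[Corollary~3.65]{leoni2009a}: the composition of the absolutely continuous $\sigma$ with the increasing absolutely continuous $f$ is absolutely continuous, and $(\sigma\circ f)'=\sigma'(f)f'$ a.e. Alternatively, one can check this directly: since $f$ is a $C^1$ diffeomorphism with $f'$ bounded below, the change of variables $u=f(s)$ gives
\[\sigma(f(y))=\sigma(0)+\int_0^{f(y)}\sigma'(u)\,du=\sigma(0)+\int_0^y \sigma'(f(s))f'(s)\,ds.\]
In addition, $\sigma'(f)f'$ is integrable on $I$ because $\sigma'\in L^1(I)$ and $f'$ is bounded. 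Therefore $f'=\frac1\nu\sigma\circ f$ is absolutely continuous with $f''=\frac1\nu\sigma'(f)f'\in L^1(I)$, so $f\in W^{2,1}(I)$. This gives item \ref{item:f''_interval}.
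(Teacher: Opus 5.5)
Your proof is correct and follows essentially the same route as the paper: separation of variables gives $g(f(y)) = y/\nu$, the boundary condition $f(1)=1$ then forces $\nu = 1/g(1)$, and items (ii)--(iii) follow from the chain rule for $\sigma\circ f$ via \cite[Corollary~3.65]{leoni2009a}, exactly as in Lemma~\ref{lem:ODE}. One small slip in your alternative argument: integrability of $\sigma'(f)f'$ follows from the change of variables applied to $|\sigma'|$, or from $f'$ being bounded \emph{below}, not from $f'$ being bounded above.
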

\begin{proof}
    We first consider the equation \eqref{eq:nonlinear_ODE_nu} for fixed $\nu > 0$ and with only the boundary condition $f(0) = 0$ enforced. Akin to the proof of Lemma~\ref{lem:ODE}, this can be solved explicitly, giving rise to the solution $f(y) = h^{-1}(y)$, where
    \[h(\xi) = \int_0^\xi \frac{\nu}{\sigma(u)}du = \nu g(\xi), \qquad \xi \in I, \]
    in terms of the function in \eqref{eq:g}.
    Since $h^{-1}(y) = g^{-1}(y/\nu)$ we obtain the claim $f(y) = g^{-1}(y/\nu)$ in \ref{item:f_interval}. Now dividing both sides of the ODE \eqref{eq:nonlinear_ODE_nu} by $\sigma(f(y))$ and integrating over $[0,1]$ gives
    \[\frac{1}{\nu} = \int_0^1 \frac{f'(y)}{\sigma(f(y))}dy = \int_0^{f(1)} \frac{1}{\sigma(u)}du,\]
    via the change of variables $u = f(y)$ in the final equality. Imposing the condition $f(1) = 1$ leads to the expression for the constant $\nu$ given in  item \ref{item:f_interval}. 
     Proceeding in the same way as in the proof of Lemma~\ref{lem:ODE} yields the formulas in items \ref{item:f''_interval} and \ref{item:f'_interval} of this lemma, and completes the proof.
    \end{proof}
With this lemma proved, we are able to establish the representation \eqref{eq:goal}.

\begin{theorem} \label{thm:main_bounded_interval}
    Consider functions $b:I \to \R$ measurable, and $\sigma:I \to (0,\infty)$ satisfying the assumptions of Lemma~\ref{lem:ODE_nu}. Further, suppose that the function $\zeta$ of \eqref{eq:zeta_halfline} is bounded on $I$. Then, \begin{enumerate}[label = (\roman*),noitemsep]
        \item \label{item:f_nu_interval} the unique global solution $(f,\nu)$, with $f:I \to I$ belonging to  $W^{2,1}(I)$ and $\nu \in (0,\infty)$, to the equation \eqref{eq:nonlinear_ODE_nu}, is given by
        \begin{equation} \label{eq:nu}
           f(y) = g^{-1}\bigg(\frac{y}{\nu}\bigg) \quad \text{for } y \in I, \qquad   \nu = \bigg(\int_0^1 \frac{1}{\sigma(u)}du\bigg)^{-1},
        \end{equation}
         where $g$ is the function in \eqref{eq:g};
        
        \item \label{item:SDE_interval} the SDE \eqref{eq:X_SDE_nu},
        with $\nu$ given by \eqref{eq:nu}, with  
        \begin{equation} \label{eq:alpha_interval}
          \alpha(x) = \nu\sign\big(x-\Even(x)\big)\,\zeta\Big(f\big(|x-\Even(x)|\big)\Big),  \qquad \text{for a.e.~}  x \in \R,
        \end{equation}
        and with $\zeta$ as in \eqref{eq:zeta_halfline},
        has a pathwise unique, strong solution for every initial value $X_0 \in \R$;
        \item the process $Y = f(|X-\Even(X)|)$ satisfies the RSDE \eqref{eq:RSDE} with initial condition $Y_0 = f(|X_0-\Even(X_0)|)$, finite variation process
        \begin{equation} \label{eq:Phi_interval} \Phi = \frac{1}{\nu}\sum_{n \in \Z}\big(\sigma(0)L^{2n}(X) - \sigma(1)L^{2n+1}(X)\big),
        \end{equation} and Brownian Motion $W = \int_0^\cdot \mathrm{sign}(X_t-\Even(X_t))dB_t$. 
    \end{enumerate}  
    \end{theorem}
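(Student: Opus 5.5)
The proof has three parts, mirroring that of Theorem~\ref{thm:main_half_line}.

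\emph{Part (i).} This is Lemma~\ref{lem:ODE_nu}. Uniqueness holds because any $W^{2,1}$ solution with $f(0)=0$ must equal $g^{-1}(\cdot/\nu)$ by separation of variables, and the condition $f(1)=1$ then pins down $\nu$.

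\emph{Part (ii).} Since $\zeta$ is bounded on $I$ and $\nu$ is a finite positive constant, the function $\alpha$ in \eqref{eq:alpha_interval} is bounded and measurable. The diffusion coefficient $\nu$ is constant and nondegenerate. I would rescale $\tilde X=X/\nu$, which satisfies $d\tilde X_t=\nu^{-1}\alpha(\nu\tilde X_t)\,dt+dB_t$ with bounded drift. Then \cite[Proposition~5.5.17]{karatzas1998brownian} gives a pathwise unique strong solution for every initial value. The fact that $\alpha$ is specified only a.e.\ is harmless: changing the drift on a Lebesgue-null set does not alter the solution, by the occupation times formula, since $X$ spends zero Lebesgue time in null sets.

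\emph{Part (iii).} Apply the It\^o--Tanaka formula to $F(x)=f(|x-\Even(x)|)$, which gives \eqref{eq:Ito_tanaka_interval}. Two features of $F$ justify this.
\begin{itemize}
\item $F$ is $C^1$ with absolutely continuous derivative on each interval $(n,n+1)$, so $F''(da)=f''(|\overline a|)\,da$ off $\Z$.
\item At the integers, $F$ has jumps in its derivative: $F'_+(2n)-F'_-(2n)=2f'(0)$ and $F'_+(2n+1)-F'_-(2n+1)=-2f'(1)$.
\end{itemize}
The occupation times formula, with $d[X]_t=\nu^2dt$, turns the absolutely continuous part into $\tfrac12\nu^2f''(|\overline X_t|)\,dt$. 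The local time sum is locally finite, because $X$ is bounded on compacts.

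Now substitute from Lemma~\ref{lem:ODE_nu}: $f'=\sigma(f)/\nu$ and $f''=\nu^{-1}\sigma'(f)f'=\nu^{-2}\sigma'(f)\sigma(f)$. The martingale part becomes $\sigma(Y_t)\sign(\overline X_t)\,dB_t=\sigma(Y_t)\,dW_t$. By L\'evy's characterization, $W$ is a Brownian motion since $\sign^2=1$. The drift becomes
\[
\sign(\overline X_t)\,\frac{\sigma(Y_t)}{\nu}\,\alpha(X_t)+\tfrac12\sigma'(Y_t)\sigma(Y_t)
=\sigma(Y_t)\zeta(Y_t)+\tfrac12\sigma'(Y_t)\sigma(Y_t)=b(Y_t).
\]
Here the first equality uses $\sign^2=1$ and the second uses the definition of $\zeta$. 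The local time terms give \eqref{eq:Phi_interval}, using $f'(0)=\sigma(0)/\nu$ and $f'(1)=\sigma(1)/\nu$.

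Finally, check that $\Phi$ has the required properties. It is a difference of two continuous nondecreasing adapted processes, hence of finite variation. $L^{2n}(X)$ increases only when $X=2n$, where $Y=f(0)=0$. $L^{2n+1}(X)$ increases only when $X=2n+1$, where $Y=f(1)=1$. This gives the inward-pointing and support conditions.

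The main obstacle is the careful handling of the a.e.-defined $\alpha$ and $f''$ inside the It\^o--Tanaka formula. I would resolve it through the occupation-density argument: any Lebesgue-null exceptional set contributes nothing to the $dt$ integrals.
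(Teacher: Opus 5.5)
Your proposal is correct and follows the paper's proof essentially step for step. Part (i) is read off from Lemma~\ref{lem:ODE_nu}, and part (ii) follows from the boundedness of $\alpha$. Part (iii) is the It\^o--Tanaka expansion \eqref{eq:Ito_tanaka_interval} combined with $f'=\sigma(f)/\nu$, $f''=\nu^{-1}\sigma'(f)f'$ and $F(2n)=0$, $F(2n+1)=1$. Your additions (the rescaling by $\nu$, L\'evy's characterization for $W$, and the occupation-density argument for the a.e.-defined $\alpha$ and $f''$) are harmless refinements of details the paper leaves implicit.
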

    \begin{proof} The proof proceeds along essentially the same lines as that of Theorem~\ref{thm:main_half_line}. 
        Item \ref{item:f_nu_interval} is just a restatement of Lemma~\ref{lem:ODE_nu}. For item \ref{item:SDE_interval}, we note that the assumptions on the function $\zeta(\cdot)$ guarantee that $\alpha(\cdot)$ is bounded, so that \eqref{eq:X_SDE_nu} has a unique strong solution, courtesy of \cite[Proposition~5.5.17]{karatzas1998brownian}.
        
        Next, with the function $f:I \to I$ as in \ref{item:f_nu_interval} and $F$ as in \eqref{eq:unit_interval_F}, we apply the It\^o\,--Tanaka formula to $Y = F(X)$ as in equation \eqref{eq:Ito_tanaka_interval}. Courtesy of the fact that $f(\cdot)$ satisfies \eqref{eq:nonlinear_ODE_nu} and $\alpha(\cdot)$ is given by \eqref{eq:alpha_interval}, this shows that $Y = F(X)$ has the dynamics of \eqref{eq:RSDE} with $b(\cdot)$ as its drift coefficient, $\sigma(\cdot)$ as its diffusion coefficient, is driven by the Brownian Motion $W$, and its reflection term equals $\Phi$ as in \eqref{eq:Phi_interval}. Since $F(2n) = 0$ and $F(2n+1) = 1$, for every $n \in \Z$, the process of finite variation on compact intervals $\Phi$ is inward-pointing, and its associated signed measure $d\Phi$ is carried by the set $\{t\geq 0: Y_t \in \{0,1\}\}$ as required.
        \end{proof}
\begin{remark}
    It is easy to see that the construction in Theorem~\ref{thm:main_bounded_interval} continues to work even if $\sigma(0) = 0$ or $\sigma(1) = 0$, as long as $\nu$ defined in \eqref{eq:nu} remains positive. In this case, the representation \eqref{eq:Phi_interval} makes clear that the component of the reflection term $\Phi$, which is supported on the boundary point where $\sigma$ is zero, vanishes.
\end{remark}
We conclude this section with some examples.
 \begin{figure}
        \centering
        \includegraphics[width=0.5\linewidth]{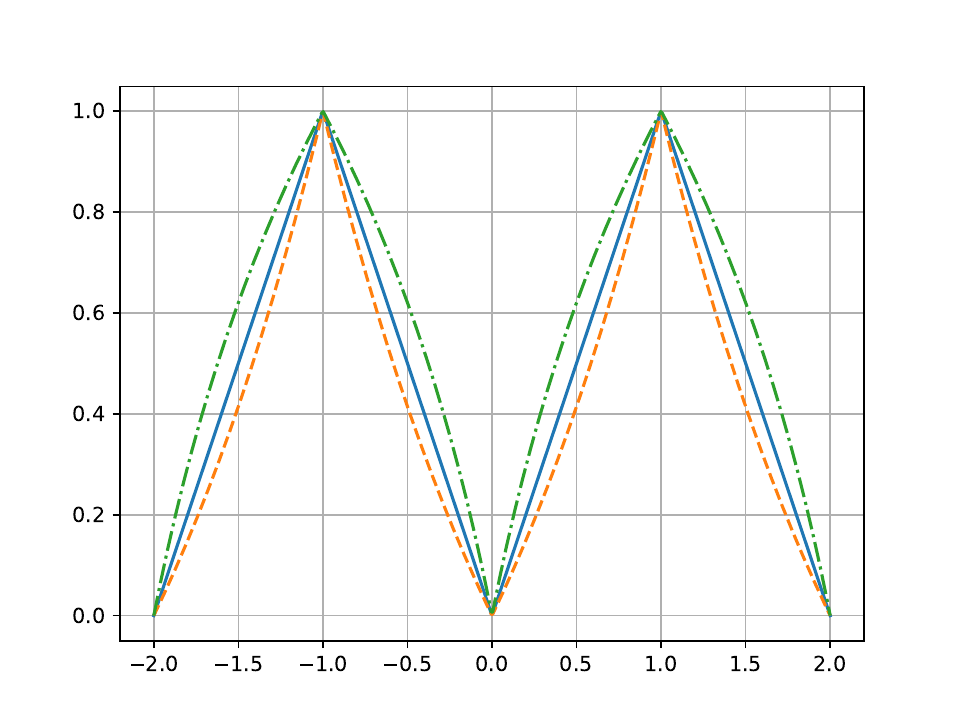}
        \caption{Depicted is $F(x)$ for $x \in [-2,2]$ for  $F(x) = |x-\Even(x)|$ (blue, solid), $F(x) = 2^{|x-\Even(x)|}-1$ (orange, dashed), and $F(x) = \log((e-1)|x-\Even(x)|+1)$ (green, dash-dot).}
        \label{fig:exampleF}
    \end{figure}
\begin{eg}
    \begin{enumerate}[label = (\roman*),noitemsep]
        \item The case $\sigma(y) = 1$ and $b(y) = \beta \in \R$ leads to Reflected Brownian Motion with drift. Here $\nu = 1$, $f(y) = y$, $F(x) = |x - \Even(x)|$, and the SDE \eqref{eq:X_SDE_nu} becomes
         \[dX_t = \beta \,\mathrm{sign}\big(X_t-\Even(X_t)\big)dt + dB_t.\] 
         \item The case $\sigma(y) = 1+y$ and $b(y) = 0$ leads to $\nu = 1/\log 2$, $f(y) = 2^y-1$ and  $F(x) = 2^{|x-\Even(x)|}-1$, whereas the SDE \eqref{eq:X_SDE_nu} becomes
         \[dX_t = -\frac{1}{2\log 2}\sign\big(X_t-\Even(X_t)\big)dt + \frac{1}{\log 2}dB_t.\]
                  \item The case $\sigma(y) = e^{-y}$ and $b(y)  = 0$ leads to $\nu = 1/(e-1)$, $f(y) =\log((e-1)y+1)$ and $F(x) = \log((e-1)|x-\Even(x)|+1)$, whereas the SDE \eqref{eq:X_SDE_nu} becomes
                  \[dX_t =\frac{\sign(X_t - \Even(X_t))}{2(e-1)^2|X_t - \Even(X_t)|+e-1}dt + \frac{1}{e-1}dB_t. \]
    \end{enumerate}
    Figure~\ref{fig:exampleF} plots the three expressions for $F(x)$ obtained above.   
\end{eg}

\subsection{Dependence on a diffusion}\label{sec:one-dimension_Z}

We conclude the one-dimensional case by studying a reflected diffusion $Y$ with values in the positive half-line $I = [0,\infty)$, and coupled with an unconstrained $d$-dimensional diffusion $Z = (Z^1,\dots,Z^d)$ taking values in a closed, bounded subset $E \subset \R^d$. This more general situation will play a pivotal role in Section~\ref{sec:convex_domain} to come.

We start by stating the following assumption. 
\begin{assum} \label{ass:coefficients_Z}
    We fix $d,m \in \mathbb{N}$ and assume coefficients of the following regularity are given:
    \begin{enumerate}[noitemsep,label = (\roman*)]
        \item $b: I \times E \to \R$ and $b_Z:I \times E \to \R^d$ are measurable and bounded,
            \item $\sigma: I \times E \to (0,\infty)$ is bounded, Lipschitz continuous,
            and $\sigma(y,z) \geq \epsilon > 0$ for every $(y,z) \in I \times E$ and some $\epsilon > 0$,
        \item $\sigma_Z:I \times E \to \R^{d \times m}$ is  Lipschitz continuous and $\sigma_Z\sigma_Z^\top$ is uniformly elliptic, 
        \item $\rho:I \times E \to \R^m$ is  Lipschitz continuous and $\|\rho(y,z)\| \le \delta < 1$ for all $(y,z) \in I \times E$ and some $\delta \in (0,1)$,
        \item The function $\widehat \sigma:I \times E \to (0,\infty)$ given by \begin{equation} \label{eq:sigma_hat}
    \widehat \sigma (y,z) = \sigma(y,z)\sqrt{1-\|\rho(y,z)\|^2}
\end{equation}
is such that $\widehat \sigma(y,\cdot)$ and its derivatives $\partial_z \widehat \sigma(y,\cdot)$, $\partial_{zz}\widehat \sigma(y,\cdot)$ are bounded uniformly in $y \in I$.
    \end{enumerate}
\end{assum} 
We study the RSDE system
\begin{equation} \label{eq:RSDE_YZ}
\begin{aligned}
    dY_t & = b(Y_t,Z_t)dt   +  \sigma(Y_t,Z_t)dW_t + d\Phi_t,   \\
    dZ_t & = b_Z(Y_t,Z_t) dt   + \sigma_Z(Y_t,Z_t)d\overline W_t, \\
    d[W,\overline W]_t & = \rho(Y_t,Z_t)dt.  
\end{aligned}
\end{equation}
Here, $W$ is a standard scalar Brownian Motion, $\overline W$ is a standard $m$-dimensional Brownian Motion (with independent components), and the coefficient $\rho(\cdot,\cdot)$ specifies the state-dependent instantaneous correlations between $W$ and $\overline W$. As in Subsection~\ref{sec:half-line}, the reflection term $\Phi$ satisfies $\Phi_0 = 0$, is a nonnegative, nondecreasing process, and its associated measure $d\Phi$ is supported on the set $\{t\geq 0: Y_t =0\}$.

We seek again a folding representation for $Y$, the only process in \eqref{eq:RSDE_YZ} whose dynamics contain a reflection term. Since $Y$ is coupled with $Z$, one can no longer expect a folding representation of the type \eqref{eq:goal} to hold. Instead, here we are looking for a folding function $F:\R \times E \to I$ and an unconstrained real-valued diffusion $X$, such that $Y = F(X,Z)$ holds $\P$-a.s. 

Clearly, this diffusion $X$ will have to be coupled with $Z$, which will take our search for a suitable process $X$ beyond the class \eqref{eq:X_SDE} of Brownian Motion with state-dependent drift. Instead, we will look for a process with dynamics of the form
\begin{equation} \label{eq:dX_Z}
    dX_t = \alpha(X_t,Z_t)dt + dB_t + \sigma_X(X_t,Z_t)^\top d\overline W_t,
\end{equation}
where $B$ is a scalar Brownian Motion independent of $\overline W$, and the coefficients $\alpha:\R\times E \to \R$ and $\sigma_X:\R \times E \to \R^m$ are to be determined. 

By analogy with the approach taken in Subsection~\ref{sec:half-line}, we look here for a folding function of the form $F(x,z) = f(|x|,z)$ for a sufficiently regular function $f:I \times E \to I$. Applying the It\^o\,--Tanaka formula to $F(X,Z)$ leads to
\begin{align}
    dF(X_t,Z_t) & = \sign(X_t)\partial_y f(|X_t|,Z_t)dX_t + \tfrac{1}{2}\partial_{yy}f(|X_t|,Z_t)d[X]_t + \partial_y f(0,Z_t)dL_t^0(X) \nonumber \\
    & \quad + \nabla_z f(|X_t|,Z_t)^\top dZ_t + \tfrac{1}{2}\mathrm{Tr}\big(\nabla_z^2 f(|X_t|,Z_t)d[Z]_t\big) + \partial_y\nabla_zf(|X_t|,Z_t)^\top d[X,Z]_t \nonumber \\
    &  = \bigg(\sign(X_t)\partial_y f(|X_t|,Z_t)\alpha(X_t,Z_t) + \tfrac{1}{2}\partial_{yy}f(|X_t|,Z_t)\big(1+\|\sigma_X(X_t,Z_t)\|^2\big) \nonumber \\
    & \qquad + \nabla_z f(|X_t|,Z_t)^\top b_Z\big(f(|X_t|,Z_t),Z_t\big) + \tfrac{1}{2}\mathrm{Tr}\Big(\nabla_z^2f(|X_t|,Z_t)a_Z\big(f(|X_t|,Z_t),Z_t\big)\Big) \nonumber  \\
    & \qquad + \partial_y \nabla_z f(|X_t|,Z_t)^\top \sigma_Z\big(f(|X_t|,Z_t),Z_t\big)\sigma_X(X_t,Z_t)\bigg)dt \nonumber \\
        & \quad +\Big(\sign(X_t)\partial_yf(|X_t|,Z_t)\sigma_X(X_t,Z_t) + \sigma_Z\big(f(|X_t|,Z_t),Z_t\big)^\top \nabla_z f(|X_t|,Z_t)\Big)^\top d\overline W_t \nonumber \\
    & \quad + \sign(X_t)\partial_yf(|X_t|,Z_t)dB_t + \partial_y f(0,Z_t)dL_t^0(X), \label{eq:dF_Z} 
\end{align}
where we substituted $f(|X_t|,Z_t)$ for $Y_t$, $\partial_y$ refers to derivatives in the first argument of $f$, $\nabla_z$ refers to derivatives in the second argument of $f$, and  we set $a_Z = \sigma_Z\sigma_Z^\top$. To ensure the drift of the process $F(X,Z)$ is equal to $b$, we see that the drift $\alpha$ in \eqref{eq:dX_Z} should be chosen via
\begin{equation} \label{eq:alphaZ}
\alpha(x,z) =\sign(x)\, \zeta_1(x,z), \qquad (x,z) \in \R \times E,
\end{equation}
where for $\nu > 0$ we define
\begin{equation} \label{eq:zetaZ}
\begin{aligned} \zeta_\nu(x& ,z) \\
& = \frac{1}{\partial_y f(|x|,z)}\bigg(b\big(f(|x|,z),z\big) - \tfrac{1}{2}\nu^2\partial_{yy} f(|x|,z)\big(1+\|\sigma_X(x,z)\|^2\big) -\nabla_z f(|x|,z)^\top b_Z\big(f(|x|,z),z\big) \\ 
& \qquad \qquad \qquad - \tfrac{1}{2}\mathrm{Tr}\Big(\nabla^2_z f(|x|,z)a_Z\big(f(|x|,z),z\big)\Big) - \partial_y \nabla_z f(|x|,z)^\top \sigma_Z\big(f(|x|,z),z\big)\sigma_X(x,z)\bigg).
\end{aligned} 
\end{equation}{ }

Next, we wish to write the dispersion terms as integrated against a Brownian Motion $W$, which satisfies $d[W,\overline W]_t = \rho(f(|X_t|,Z_t),Z_t)dt$.
 By inspecting the last two lines in \eqref{eq:dF_Z}, we see that this can be accomplished if the dispersion $\sigma_X$ in \eqref{eq:dX_Z} is chosen via 
\begin{equation} \label{eq:sigmaX}
\sigma_X(x,z) = \sign(x)\bigg(\frac{\rho(f(|x|,z),z)}{\sqrt{1-\|\rho(f(|x|,z),z)\|^2}} - \frac{ \sigma_Z(f(|x|,z),z)^\top \nabla_z f(|x|,z)}{\partial_y f(|x|,z)}\bigg), \qquad (x,z) \in \R \times E.
\end{equation}
With the choices \eqref{eq:alphaZ} and \eqref{eq:sigmaX}, the dynamics \eqref{eq:dF_Z} collapse to
\begin{equation} \label{eq:df_Z}
df(|X_t|,Z_t) = b\big(f(|X_t|,Z_t),Z_t\big)dt + \frac{\partial_y f(|X_t|,Z_t)}{\sqrt{1-\|\rho(f(|X_t|,Z_t),Z_t)\|^2}}dW_t + d\Phi_t,
\end{equation}
where 
\begin{equation} \label{eq:WZ}
    W =  \int_0^\cdot \sign(X_t)\sqrt{1-\big\|\rho\big(f(|X_t|,Z_t),Z_t\big)\big\|^2}\,dB_t + \int_0^\cdot \rho\big(f(|X_t|,Z_t),Z_t\big)^\top d\overline W_t 
\end{equation}
is scalar Brownian Motion, and 
   \begin{equation} \label{eq:Phi_Z_prelim} 
   \Phi = \int_0^\cdot \partial_y f(0,Z_t)dL_t^0(X).
   \end{equation}
As such, for $(Y,Z) = (f(|X|,Z),Z)$ to satisfy \eqref{eq:RSDE_YZ}, it just remains to match the dispersion coefficient in front of $W$ and ensure that $\Phi$ only accumulates on the set $\{t \geq 0: Y_t = 0\}$. This leads to the parameter-dependent initial value problem
\begin{equation}
\label{eq:ODE_Z}
\begin{cases}
    \partial_y f(y,z) = \widehat \sigma(f(y,z),z), & (y,z) \in [0,\infty) \times E, \\
    f(0,z) = 0, & z \in E
\end{cases}
\end{equation}
for the function $f:I \times E \to I$, where $\widehat \sigma$ is given by \eqref{eq:sigma_hat}.
For each fixed $z \in E$, the equation \eqref{eq:ODE_Z} is a nonlinear ODE in the variable $y$, and of the exact same form as \eqref{eq:nonlinear_ODE} with $\widehat \sigma (\cdot,z)$ in place of $\sigma(\cdot)$. As such, it admits the solution 
\begin{equation} \label{eq:f_Z}
    f(\cdot,z) = \kappa^{-1}(\cdot,z), \qquad \text{where} \qquad  \kappa(\xi,z) = \int_0^\xi \frac{1}{\widehat \sigma(u,z)}du \qquad \text{for all } z \in E, \,  \xi \in I.
\end{equation}

This establishes the following result.
\begin{theorem} \label{thm:halfline_Z}
    Let Assumption~\ref{ass:coefficients_Z} on the coefficients of the system \eqref{eq:RSDE_YZ} hold. Then:
    \begin{enumerate}[label = (\roman*),noitemsep]
        \item \label{item:f_halfline_Z} For every $z \in E$, the function $f(\cdot,z)$ of \eqref{eq:f_Z} is the unique solution to the parameter-dependent nonlinear ODE \eqref{eq:ODE_Z}, where $\widehat \sigma(\cdot,z)$ is given by \eqref{eq:sigma_hat}. Moreover, $f$ is of class $C^1$, $f(\cdot,z)$ belongs to the class $W^{2,1}_{\mathrm{loc}}(I)$ for every $z \in E$, and $f(x,\cdot)$ is of class $C^2$ for every $x \in I$;
        \item \label{item:SDE_halfline_Z} Consider the SDE system
        \begin{equation} \label{eq:SDE_system_halfline}
        \begin{aligned}
            dX_t & = \alpha(X_t,Z_t)dt + dB_t + \sigma_X(X_t,Z_t)^\top d\overline W_t, \\
            dZ_t & = \overline b_Z(X_t,Z_t)dt + \overline \sigma_Z(X_t,Z_t)d\overline W_t,
        \end{aligned}
        \end{equation}
        where $\overline b_Z(x,z) = b_Z(f(|x|,z),z)$,  $\overline \sigma_Z (x,z) = \sigma_Z(f(|x|,z),z)$, $B$ and $\overline W$ are independent univariate and $m$-dimensional Brownian Motions respectively, and $\alpha$, $\sigma_X$ are given by \eqref{eq:alphaZ} and \eqref{eq:sigmaX} respectively. 
        This system has a globally defined weak solution, which is unique in law, for any initial values $(X_0,Z_0) = (x,z) \in \R \times E$;
        \item \label{item:folding_halfline_Z} The process $(Y,Z) = (f(|X|,Z),Z)$, with $(X,Z)$ as in \eqref{eq:SDE_system_halfline} and $f$ given by \eqref{eq:f_Z}, satisfies the RSDE system \eqref{eq:RSDE_YZ} on $I \times E$ with initial condition $(Y_0,Z_0) = (f(|X_0|,Z_0),Z_0)$, scalar Brownian Motion $W$ as in \eqref{eq:WZ}, and reflection term
        \begin{equation} \label{eq:Phi_Z}
            \Phi = \int_0^\cdot \sigma(0,Z_t)\sqrt{1-\|\rho(0,Z_t)\|^2}\, dL_t^0(X).
        \end{equation}    \end{enumerate}
\end{theorem}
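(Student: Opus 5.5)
The proof has three parts, matching the three items of Theorem~\ref{thm:halfline_Z}.

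\emph{Item (i).} For each fixed $z\in E$, Assumption~\ref{ass:coefficients_Z} makes $\widehat\sigma(\cdot,z)$ Lipschitz (hence locally absolutely continuous) and bounded below by $\epsilon\sqrt{1-\delta^2}>0$. It is also bounded above, so $\int_0^\infty du/\widehat\sigma(u,z)=\infty$. Lemma~\ref{lem:ODE} therefore applies verbatim with $\widehat\sigma(\cdot,z)$ in place of $\sigma$. It gives uniqueness, the formula \eqref{eq:f_Z}, membership of $f(\cdot,z)$ in $W^{2,1}_{\mathrm{loc}}(I)$, and $\partial_y f(0,z)=\widehat\sigma(0,z)$.

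For regularity in $z$, I would use the implicit function theorem on $\kappa(f(y,z),z)=y$. We have $\partial_\xi\kappa=1/\widehat\sigma>0$. Also $\nabla_z\kappa(\xi,z)=-\int_0^\xi \nabla_z\widehat\sigma/\widehat\sigma^2\,du$ and the analogous second-derivative integral exist and are continuous, because $\widehat\sigma(y,\cdot)$ has bounded first and second $z$-derivatives uniformly in $y$ (item (v)). This gives $f(y,\cdot)\in C^2$ and $f\in C^1$ jointly, with
\[
\nabla_z f=-\widehat\sigma(f,z)\,\nabla_z\kappa(f,z).
\]

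\emph{Item (ii).} First I would check that the coefficients of \eqref{eq:SDE_system_halfline} are bounded and measurable, and that the diffusion matrix is uniformly nondegenerate. The $\rho$-part of $\sigma_X$ is bounded by $\delta/\sqrt{1-\delta^2}$. The second part of $\sigma_X$ and the terms of $\zeta_1$ involve $\nabla_z f/\partial_y f$, $\nabla_z^2 f$, $\partial_y\nabla_z f$ and $\partial_{yy}f/\partial_y f=\partial_y\widehat\sigma(f,z)$.

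Here lies the main obstacle. The bound $|\nabla_z\kappa(\xi,z)|\lesssim \xi$ grows linearly, so boundedness of $\nabla_z f$ in $y$ is not automatic. I would try to get around this by controlling $\nabla_z f$ through the linear ODE obtained by differentiating \eqref{eq:ODE_Z} in $z$:
\[
\partial_y\nabla_z f=\partial_y\widehat\sigma\,\nabla_z f+\nabla_z\widehat\sigma ,
\]
and by using that $E$ is bounded. If global boundedness still fails, I would fall back on local boundedness together with Remark~\ref{rem:bounded_drift}-type non-explosion arguments.

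Granting boundedness, the covariance of $(X,Z)$ is
\[
\begin{pmatrix}1+\|\sigma_X\|^2 & \sigma_X^\top\overline\sigma_Z^\top\\ \overline\sigma_Z\sigma_X & \overline\sigma_Z\overline\sigma_Z^\top\end{pmatrix}.
\]
Its Schur complement with respect to the $Z$-block is at least $1$, because $B$ is independent of $\overline W$. So the matrix is uniformly elliptic, and it is continuous away from $x=0$. Weak existence and uniqueness in law then follow by Stroock--Varadhan for bounded measurable drift and continuous uniformly elliptic diffusion. Alternatively, one can start from the driftless system and remove the bounded drift $\alpha$ via Girsanov. The sign discontinuity of $\sigma_X$ at $x=0$ is handled by noting that the diffusion matrix depends on $x$ only through $|x|$, apart from the off-diagonal sign. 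An alternative is to rewrite the system in terms of $(|X|,Z)$ with $W$ from \eqref{eq:WZ}.

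\emph{Item (iii).} I apply the generalized It\^o--Tanaka formula \eqref{eq:dF_Z} to $F(x,z)=f(|x|,z)$. This is legitimate because $f$ is $C^1$, $C^2$ in $z$, and $W^{2,1}_{\mathrm{loc}}$ in $y$; the $\partial_{yy}$ term is handled through the occupation-times formula as in \eqref{eq:Ito_X}. Substituting \eqref{eq:alphaZ} and \eqref{eq:sigmaX} yields \eqref{eq:df_Z}.

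Next, L\'evy's characterization shows that $W$ in \eqref{eq:WZ} is a Brownian Motion: its quadratic variation is $(1-\|\rho\|^2)+\|\rho\|^2=1$. The same computation gives $d[W,\overline W]=\rho\,dt$. Equation \eqref{eq:ODE_Z} turns the coefficient in front of $dW$ into $\sigma(Y,Z)$.

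Finally, $\partial_y f(0,z)=\widehat\sigma(0,z)=\sigma(0,z)\sqrt{1-\|\rho(0,z)\|^2}$ gives \eqref{eq:Phi_Z}. This $\Phi$ is nondecreasing since the integrand is positive. It is supported on $\{X=0\}=\{Y=0\}$, because $f(\cdot,z)$ is strictly increasing with $f(0,z)=0$.
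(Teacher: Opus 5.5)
Your structure matches the paper's, and items (i) and (iii) are argued the same way. Item (ii), however, has a genuine gap at the uniqueness-in-law step.

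Stroock--Varadhan requires a continuous diffusion matrix, and $A$ is not continuous. Write $\sigma_X(x,z)=\sign(x)\,\psi(|x|,z)$. Since $f(0,\cdot)\equiv0$ forces $\nabla_z f(0,\cdot)\equiv0$, one has $\psi(0,z)=\rho(0,z)/\sqrt{1-\|\rho(0,z)\|^2}$. Hence the off-diagonal block $\overline\sigma_Z\sigma_X$ jumps by $2\sigma_Z(0,z)\psi(0,z)$ across $\{x=0\}$. None of your workarounds handles this:
\begin{itemize}
\item Girsanov only removes the drift.
\item Saying that $A$ depends on $x$ through $|x|$ ``apart from the off-diagonal sign'' just restates the difficulty.
\item Passing to $(|X|,Z)$ would need uniqueness for a reflected system plus an argument identifying the law of the excursion signs, and you give neither.
\end{itemize}
This is not a formality: weak uniqueness can fail for discontinuous uniformly elliptic coefficients in dimension $\ge3$. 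The missing ingredient, which the paper uses, is Krylov's weak-uniqueness result for diffusion matrices with a codimension-one discontinuity \cite[Remark~3.4]{krylov2004on}. It applies after localization, because $A$ is uniformly continuous on $\{x>0\}\times E$ and on $\{x<0\}\times E$. Your Schur-complement proof of uniform ellipticity is correct and goes beyond what the paper writes.

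There are two further points, both of which the paper's own proof shares.

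\emph{The cross term in item (iii).} You quote \eqref{eq:dF_Z} instead of deriving it, and it drops a factor. Since $d[|X|,Z]=\sign(X)\,d[X,Z]$, the cross-variation term is $\sign(X_t)\,\partial_y\nabla_z f^\top\overline\sigma_Z\sigma_X\,dt=\partial_y\nabla_z f^\top\overline\sigma_Z\,\psi\,dt$. With $\zeta_1$ exactly as in \eqref{eq:zetaZ}, the drift of $Y$ on $\{X<0\}$ is therefore $b+2\,\partial_y\nabla_z f^\top\overline\sigma_Z\,\psi$, not $b$. For example, take $\rho\equiv0$, $b\equiv0$, $b_Z\equiv0$, $\sigma_Z\equiv I$, and $\sigma(y,z)=1+z^1/2$ with $|z^1|\le1$ on $E$. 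Then $f(y,z)=(1+z^1/2)y$, and the drift of $Y$ on $\{X_t<0\}$ is $-|X_t|/(2+Z^1_t)$. So the claim ``substituting \eqref{eq:alphaZ} and \eqref{eq:sigmaX} yields \eqref{eq:df_Z}'' holds only after replacing $\sigma_X(x,z)$ by $\psi(|x|,z)$ in the last term of \eqref{eq:zetaZ}. That correction also makes $\alpha$ odd in $x$, like $\sigma_X$.

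\emph{Boundedness.} The same example confirms your worry. There $\nabla_z f$, $\sigma_X$ and $\alpha$ grow linearly in $|x|$, and the paper merely asserts boundedness. Your linear ODE in $z$ cannot help, since solving it returns $\nabla_z f=\widehat\sigma(f,z)\int_0^{f}\nabla_z\widehat\sigma/\widehat\sigma^2\,du$. Your fallback does work with the corrected drift, and you should write it out:
\begin{itemize}
\item Solve with truncated coefficients up to $\xi_n=\inf\{t:|X_t|\ge n\}$.
\item On $[0,\xi_n)$, $Y=f(|X|,Z)$ solves the RSDE with bounded $b$ and $\sigma$.
\item The Skorokhod representation then gives $Y_t\le2\sup_{s\le t}|Y_0+\int_0^s b\,du+\int_0^s\sigma\,dW|$, which is tight in $n$ on $[0,T\wedge\xi_n]$.
\item Since $|X|=\kappa(Y,Z)\le Y/(\epsilon\sqrt{1-\delta^2})$, $X$ cannot explode.
\end{itemize}
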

\begin{proof}
    First, we note that for every $z \in E$, the function $\widehat \sigma(\cdot,z)$ satisfies the conditions of Lemma~\ref{lem:ODE}. This yields the unique solution $f(\cdot,z)$ given by \eqref{eq:f_Z} and establishes that $f(\cdot,z) \in W^{2,1}_{\mathrm{loc}}(I)$.  Additionally, since $\kappa$ is jointly $C^1$ with $\partial_\xi \kappa = 1/\widehat \sigma > 0$, the Inverse Function Theorem ensures that $f$ is of class $C^1$. Moreover, since $\widehat \sigma(y,\cdot)$ is of class $C^2_b$, the function  $f(x,\cdot)$ is of class $C^2$ for every $x \in I$, proving \ref{item:f_halfline_Z}.   
     
     To establish well-posedness for \eqref{eq:SDE_system_halfline} we first note that $\partial_y f(y,z) = \widehat \sigma(f(y,z),z) \ge \epsilon\sqrt{1-\delta^2} > 0$ for all $(y,z) \in I \times E$. Hence, by boundedness of the coefficients assumed in Assumption~\ref{ass:coefficients_Z}, we see that $\alpha$ and $\overline b_Z$ are bounded. Next, we note that the joint diffusion matrix of $(X,Z)$ is given in block form by
     \[A(x,z) = \begin{bmatrix}
        1 + \|\sigma_X(x,z)\|^2 & \sigma_X(x,z)^\top \overline \sigma_Z(x,z)^\top   \\
        \overline \sigma_Z(x,z)\sigma_X(x,z) & \overline \sigma_Z(x,z)\overline\sigma_Z(x,z)^\top 
     \end{bmatrix}.
     \] The function $A$ is uniformly elliptic and  continuous outside the set $\{x = 0\}$, with the discontinuity arising due to the block off-diagonal terms containing a $\sign(x)$ term inherited from its presence in equation \eqref{eq:sigmaX} defining $\sigma_X$. However, since this discontinuity is of co-dimension one, the system \eqref{eq:SDE_system_halfline} still admits a global solution which is unique in law. Indeed, this follows from \cite[Remark~3.4]{krylov2004on}, applicable here because $A$ is uniformly continuous on $\{x > 0\} \times E$ and $\{x < 0\} \times E$; this follows from the Lipschitz continuity of  $\sigma_X$ and $\overline \sigma_Z$ on both of these domains. This claim \ref{item:SDE_halfline_Z} is established.

     To establish the folding representation, we note that the regularity of $f$ allows us to apply the It\^o\,--Tanaka formula to $ F(X,Z) = f(|X|,Z)$. This leads to the dynamics \eqref{eq:dF_Z}, and substituting  \eqref{eq:alphaZ} for $\alpha$ and \eqref{eq:sigmaX} for $\sigma_X$ leads to \eqref{eq:df_Z}, with the scalar Brownian Motion $W$ given by \eqref{eq:WZ} and the process of finite variation $\Phi$ given by \eqref{eq:Phi_Z_prelim}. Recalling that $f$ satisfies the parameter-dependent ODE \eqref{eq:ODE_Z}, we see that the diffusion coefficient in \eqref{eq:df_Z} is equal to $\sigma(f(|X_t|,Z_t),Z_t)$ and the reflection term has the representation \eqref{eq:Phi_Z}. This establishes that $(Y,Z) = (f(|X|,Z),Z)$ satisfies the system \eqref{eq:RSDE_YZ}, and completes the proof.
    \end{proof}

    It is clear that the methods developed here, together with those of Subsection~\ref{sec:unit_interval}, can also be applied to obtain a folding representation on $I \times E$, for $(Y,Z)$ as in \eqref{eq:RSDE_YZ}, when $I = [0,1]$ is a bounded interval. We state this result without proof, as it follows by combining the construction of Theorem~\ref{thm:halfline_Z} with the approach of Subsection~\ref{sec:unit_interval}. The main modification is a $z$-dependent normalization constant $\nu(z)$, which is as in Lemma~\ref{lem:ODE_nu} with $\widehat \sigma(\cdot,z)$ in place of $\sigma$. Note that the assumed regularity on $\widehat \sigma$ ensures that the resulting function $\nu$ is of class $C^2$.
\begin{theorem} \label{thm:interval_Z}
    Let Assumption~\ref{ass:coefficients_Z} on the coefficients of the system \eqref{eq:RSDE_YZ} hold for $I = [0,1]$.
    \begin{enumerate}[label = (\roman*),noitemsep]
        \item There exist unique $f:I \times E \to I$ with $f(\cdot,z)$ belonging to $W^{2,1}(I)$ for every $z \in E$, and $\nu: E \to (0,\infty)$ satisfying
        \[\begin{cases}
            \partial_y f(y,z)  = \frac{1}{\nu(z)}\widehat \sigma(f(y,z),z), &   (y,z) \in I \times E, \\
            f(0,z) = 0, \quad  f(1,z) = 1, & z \in E,
        \end{cases}\]
        where $\widehat \sigma$ is given by \eqref{eq:sigma_hat}.
        These functions are explicitly given by the expressions
        \begin{equation} 
        \label{eq:nu_Z} 
        f(y,z) = \kappa^{-1}\bigg(\frac{y}{\nu(z)},z\bigg), \qquad \text{where} \qquad \nu(z) = \bigg(\int_0^1 \frac{1}{\widehat \sigma(u,z)}du\bigg)^{-1}
        \end{equation}
        and $\kappa^{-1}(\cdot,z)$ the inverse of the function $\kappa(\cdot,z)$ in \eqref{eq:f_Z}. Moreover, $f(\cdot,\cdot)$ is of class $C^1$, $f(x,\cdot)$ is of class $C^2$ for every $x \in I$, and $\nu(\cdot)$ is of class $C^2$.
        \item Consider the SDE system 
        \begin{equation}
    \label{eq:SDE_system_interval}
        \begin{aligned}
            dX_t & = \alpha(X_t,Z_t)dt + \nu(Z_t)\big(dB_t + \sigma_X(\overline X_t,Z_t)^\top d\overline W_t\big), \\
            dZ_t & = \overline b_Z(X_t,Z_t)dt + \overline \sigma_Z(X_t,Z_t)d\overline W_t,
        \end{aligned}
        \end{equation}
        where 
        $\overline b_Z(x,z) = b_Z(f(|\overline x|,z),z), \overline \sigma_Z(x,z) = \sigma_Z(f(|\overline x|,z),z)$
        with $\overline x = x - \Even(x)$, where $f(\cdot,\cdot)$ and $\nu(\cdot)$ are given by \eqref{eq:nu_Z}, $\sigma_X(\cdot,\cdot)$ is given by \eqref{eq:sigmaX},
        and where
        \[\alpha(x,z) = \sign(\overline x) \,\zeta_{\nu(z)}(\overline x,z), \qquad (x,z) \in [0,1] \times E,\]
        with $\zeta$ given by \eqref{eq:zetaZ}. This system has a globally defined weak solution, which is unique in law, for any initial values $(X_0,Z_0) = (x,z) \in \R \times E$.
        \item The process $(Y,Z) = (f(|\overline X|,Z),Z)$, with $(X,Z)$ as in \eqref{eq:SDE_system_interval}, $f$ as in \eqref{eq:nu_Z} and $\overline X = X - \Even(X)$,  satisfies the RSDE system \eqref{eq:RSDE_YZ} on $I\times E$ with initial condition $(Y_0,Z_0) = (f(|\overline X_0|,Z_0),Z_0)$, scalar Brownian Motion  \begin{align*} 
    W & =  \int_0^\cdot \sign(\overline X_t)\sqrt{1-\big\|\rho\big(f(|\overline X_t|,Z_t),Z_t\big)\big\|^2}\,dB_t + \int_0^\cdot \rho\big(f(|\overline X_t|,Z_t),Z_t\big)^\top d\overline W_t, 
\intertext{and reflection term}
        \Phi & = \sum_{n \in \Z}\int_0^\cdot  \frac{1}{\nu(Z_t)} \big(\sigma(0,Z_t)\sqrt{1-\|\rho(0,Z_t)\|^2}\, dL_t^{2n}(X)\\
        & \qquad \qquad \qquad \qquad   - \sigma(1,Z_t)\sqrt{1-\|\rho(1,Z_t)\|^2}\, dL_t^{2n+1}(X)\big).
        \end{align*}
    \end{enumerate}
\end{theorem}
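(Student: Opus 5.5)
The proof combines the proof of Theorem~\ref{thm:halfline_Z} with Lemma~\ref{lem:ODE_nu} and Theorem~\ref{thm:main_bounded_interval}, in three steps mirroring items (i)--(iii).

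\emph{Step 1: item (i).} For each fixed $z \in E$, apply Lemma~\ref{lem:ODE_nu} with $\widehat\sigma(\cdot,z)$ in place of $\sigma$. This is legitimate because Assumption~\ref{ass:coefficients_Z} gives $\widehat\sigma \ge \epsilon\sqrt{1-\delta^2}>0$ and makes $\widehat\sigma(\cdot,z)$ Lipschitz, hence absolutely continuous. The lemma yields existence, uniqueness and the explicit formulas \eqref{eq:nu_Z}.

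For regularity, note that $\kappa(\xi,z)=\int_0^\xi \widehat\sigma(u,z)^{-1}du$ is jointly $C^1$ with $\partial_\xi\kappa>0$. Assumption~\ref{ass:coefficients_Z}(v) allows differentiation under the integral twice in $z$, so $\kappa(\xi,\cdot)$ and $\nu(z)=\kappa(1,z)^{-1}$ are $C^2$. Then $f(y,z)=\kappa^{-1}(y/\nu(z),z)$ is $C^1$ by the inverse function theorem, and $C^2$ in $z$ for fixed $y$ by implicitly differentiating $\kappa(f(y,z),z)=y/\nu(z)$. Finally, $f(\cdot,z)\in W^{2,1}(I)$ with $\partial_{yy}f=\nu^{-1}\partial_y\widehat\sigma(f,z)\,\partial_y f$ a.e., exactly as in Lemma~\ref{lem:ODE_nu}\ref{item:f''_interval}.

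\emph{Step 2: item (ii).} Repeat the argument of Theorem~\ref{thm:halfline_Z}\ref{item:SDE_halfline_Z}.

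\begin{itemize}
\item \emph{Boundedness.} Since $\partial_y f\ge \epsilon\sqrt{1-\delta^2}/\sup\nu$ and $\nu$ is bounded above and below, while $\partial_{yy}f$, $\nabla_z f$, $\nabla_z^2 f$ and $\partial_y\nabla_z f$ are bounded (from the uniform bounds on $\widehat\sigma$ and its $z$-derivatives on the compact $E$), the drift $\alpha$ and the coefficient $\sigma_X$ are bounded.
\item \emph{Ellipticity.} The diffusion matrix of $(X,Z)$ is the block matrix $A$ of that proof, with the $X$-block and cross terms multiplied by $\nu(z)^2$ and $\nu(z)$. It remains uniformly elliptic, since $\nu$ is bounded away from $0$ and its Schur complement with respect to the $Z$-block is $\nu^2>0$.
\item \emph{Discontinuities.} $A$ is now discontinuous on the countable union of hyperplanes $\{x\in\Z\}\times E$, caused by the factor $\sign(\overline x)$. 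It is uniformly continuous on each slab $\{n<x<n+1\}\times E$.
\end{itemize}

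The main obstacle is to justify applying \cite[Remark~3.4]{krylov2004on} with infinitely many discontinuity hyperplanes instead of one. I would localize. Up to the exit time of $(-N,N)\times E$ only finitely many hyperplanes matter, and these are separated by distance $1$. Near any single hyperplane $\{x=n\}$ the situation is exactly the one treated in Theorem~\ref{thm:halfline_Z}. Gluing the local weak solutions and local uniqueness in law in the standard way, and using that bounded coefficients exclude explosion, yields a global weak solution that is unique in law.

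\emph{Step 3: item (iii).} Apply the It\^o--Tanaka formula to $F(X,Z)=f(|\overline X|,Z)$. This is justified by the regularity from Step 1: $f$ is $W^{2,1}$ in $y$ and $C^2$ in $z$, handled via a generalized It\^o formula or by mollifying in $y$ as in \eqref{eq:dF_Z}.

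\begin{itemize}
\item \emph{Quadratic variation.} Since $d[X]_t=\nu(Z_t)^2(1+\|\sigma_X\|^2)dt$ and $d[X,Z]_t=\nu(Z_t)\overline\sigma_Z\sigma_X\,dt$, the computation is \eqref{eq:dF_Z} with these extra $\nu$ factors. They are precisely what $\zeta_{\nu(z)}$ absorbs in the drift. I would double-check that the cross term $\partial_y\nabla_z f^\top\sigma_Z\sigma_X$ picks up the correct power of $\nu$, and adjust \eqref{eq:zetaZ} accordingly if it does not.
\item \emph{Martingale part.} This is $\nu\,\partial_y f\,[\sign(\overline X)dB+\ldots]$, and $\nu\partial_y f=\widehat\sigma(f,z)$. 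Hence, by the same algebra as in \eqref{eq:df_Z}--\eqref{eq:WZ} with $\overline X$ replacing $X$, it equals $\sigma(Y,Z)\,dW$, where $W$ has the stated form. L\'evy's characterization shows that $W$ is a Brownian Motion, and $d[W,\overline W]=\rho\,dt$.
\item \emph{Local-time terms.} The kinks of $F$ at even integers contribute $\partial_y f(0,Z)\,dL^{2n}(X)$, and those at odd integers contribute $-\partial_y f(1,Z)\,dL^{2n+1}(X)$. Using $\partial_y f(i,z)=\widehat\sigma(i,z)/\nu(z)$ for $i\in\{0,1\}$ gives the stated $\Phi$.
\end{itemize}

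$\Phi$ is of finite variation and inward-pointing, and it is carried by $\{Y\in\{0,1\}\}$ because $F(2n,z)=0$ and $F(2n+1,z)=1$.
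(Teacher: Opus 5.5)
Your route is the one the paper intends. The paper gives no separate proof, only the remark that the result follows by combining Theorem~\ref{thm:halfline_Z} with Subsection~\ref{sec:unit_interval}. Step~1, the localization in Step~2 and the local-time bookkeeping in Step~3 are fine.

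\textbf{The gap: the martingale-part claim in Step~3.} The factor $\nu(Z_t)$ multiplies only the noise of $X$, not that of $Z$. So the $d\overline W$-coefficient of $dF(X_t,Z_t)$ is $\sign(\overline X_t)\,\nu\,\partial_y f\,\sigma_X+\sigma_Z^\top\nabla_z f$, not $\nu\,\partial_y f$ times a half-line expression. Take $\sigma_X$ literally from \eqref{eq:sigmaX}; its second term has denominator $\partial_y f=\widehat\sigma(f,z)/\nu(z)$. Then this coefficient equals
\[
\sigma\rho+(1-\nu)\,\sigma_Z^\top\nabla_z f ,
\]
which differs from $\sigma\rho$ wherever $\nu(z)\neq 1$ and $\nabla_z f\neq 0$. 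For example, $\widehat\sigma(u,z)=1+uz$ with scalar $z>0$ gives $\nu(z)=z/\log(1+z)\neq 1$ and $f(y,z)=((1+z)^y-1)/z$, which depends on $z$ for $y\in(0,1)$. So $Y$ picks up a spurious $d\overline W$-component, and "the same algebra as in \eqref{eq:df_Z}--\eqref{eq:WZ}" does not go through. The statement cannot be verified as written. The second term of $\sigma_X$ needs $\nu(z)\,\partial_y f=\widehat\sigma(f,z)$ in its denominator:
\[
\sigma_X(\overline x,z)=\sign(\overline x)\,\frac{\sigma\rho-\sigma_Z^\top\nabla_z f}{\widehat\sigma},
\]
with $\sigma,\rho,\sigma_Z,\widehat\sigma$ evaluated at $(f(|\overline x|,z),z)$ and $\nabla_z f$ at $(|\overline x|,z)$.

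\textbf{The drift cross term.} It has the defect you half-suspected, plus one more. Since $\partial_x\nabla_z F=\sign(\overline x)\,\partial_y\nabla_z f$ and $d[X,Z]_t=\nu(Z_t)\,\overline\sigma_Z\sigma_X\,dt$, the cross term is $\nu\,\partial_y\nabla_z f^\top\sigma_Z\,\big(\sign(\overline X_t)\sigma_X\big)\,dt$.
\begin{enumerate}[label=(\alph*),noitemsep]
\item In $\zeta_\nu$ of \eqref{eq:zetaZ}, the parameter $\nu$ enters only the $\partial_{yy}f$ term, so the factor $\nu$ is missing from the cross term. Thus $\zeta_{\nu(z)}$ does not "absorb" the $\nu$ factors as you claim.
\item The factor $\sign(\overline x)$ is also missing, because \eqref{eq:dF_Z} already drops it; this slip is inherited from Theorem~\ref{thm:halfline_Z}. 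Since $\sigma_X$ itself carries $\sign(\overline x)$, the omission flips the sign of this term wherever $\overline X_t<0$.
\end{enumerate}
The last term inside $\zeta_{\nu(z)}$ should therefore be $-\nu(z)\,\partial_y\nabla_z f^\top\sigma_Z\,\sign(\overline x)\,\sigma_X(\overline x,z)$, divided by $\partial_y f$ as before. This makes $\alpha$ odd in $\overline x$, as it must be.

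With both corrections your argument closes as written:
\begin{enumerate}[label=(\alph*),noitemsep]
\item boundedness and uniform ellipticity in Step~2 are unaffected;
\item the martingale part becomes $\sigma(Y,Z)\,dW$ with the stated $W$;
\item the drift becomes $b$;
\item the local-time terms give the stated $\Phi$.
\end{enumerate}
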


\section{Multidimensional domains: orthant and hypercube} \label{sec:multidimensional}

\subsection{The positive orthant} \label{sec:orthant}
Here we study the folding representation question in the positive orthant $D = [0,\infty)^d \subset \R^d$ with inward normal reflection on its boundary. Specifically, we start with an RSDE on $D$ given symbolically by \eqref{eq:RSDE}, but where $Y = (Y^1,\dots,Y^d)^\top $ is now a $d$-dimensional process, and the coefficients $b:D \to \R^d$
and $\sigma:D \to \R^{d \times d}$ are vector- and matrix-valued, respectively. The reflection term $\Phi = (\Phi^1,\dots,\Phi^d)^\top $ is now a multivariate process with continuous paths of finite first variation on compact intervals, required to accumulate only on the boundary, i.e., satisfying
\[\mathrm{supp}(d\Phi_\cdot) \subset \{t \geq 0: Y_t \in \partial D\},\] and with inward reflection in the normal direction, i.e., 
\begin{equation} \label{eq:bivariate_Phi}
\Phi_t = \int_0^t n(Y_s)\, d\TV{\Phi}_s = \sum_{i=1}^d e_i \int_0^t 1_{\{Y^i_s = 0\}} d\TV{\Phi}_s; \qquad t \geq 0.
\end{equation}
 Here $n(y)$ is the inward-pointing normal vector at $y \in \partial D$, $e_1,\dots,e_d$ are the standard basis vectors, and $\TV{\Phi}_t$ is the total variation of $\Phi$ on $[0,t]$; that is,
 \begin{equation} \label{eq:TV}
     \TV{\Phi}_t = \sup_\pi  \sum_{k=1}^n \|\Phi_{t_k} - \Phi_{t_{k-1}}\|,
 \end{equation}
 with the supremum taken over all partitions $\pi$ of the form $0 = t_0 < t_1 < \dots < t_n=t$ for some $n \in \mathbb{N}$.

We look now for a vector-valued folding function $F = (F^1,\dots, F^d)^\top : \R^d \to D$ and a standard diffusion $X = (X^1,\dots,X^d)^\top$ of the form $dX^j_t = \alpha^j(X_t)dt + dB^j_t$, $j=1,\dots,d$ as in \eqref{eq:X_SDE}, but now with independent standard Brownian Motions $B^1,\dots,B^d$ and with suitable drift functions $\alpha^j: \R^d \to \R$, for which the relationship \eqref{eq:goal} holds.

Motivated by our analysis in the one-dimensional case, a natural Ansatz is to take $F^i(x) = f^i(|x|)$  for $i=1,\dots,d$, where $f^i:[0,\infty) \to [0,\infty)$ is a sufficiently regular function to be determined, and we write $|x|$ for the vector $(|x^1|,\dots,|x^d|)$.

Now applying the It\^o\,--Tanaka formula to each $F^i(X)$ gives
\begin{equation} \label{eq:dF_quadrant}
\begin{split} 
dF^i(X_t) & = \sum_{j=1}^d \partial_j f^i(|X_t|)d|X^j|_t + \tfrac{1}{2}\sum_{j=1}^d \partial_{jj} f^i(|X_t|)d[|X^j|]_t    \\
& = \sum_{j=1}^d \big( \partial_jf^i(|X_t|)\sign(X_t^j)\alpha^j(X_t) + \tfrac{1}{2}\partial_{jj}f^i(|X_t|) \big)dt + \sum_{j=1}^d \partial_j f^i(|X_t|)\sign(X_t^j)dB^j_t  \\
& \qquad + \sum_{j=1}^d \partial_j f^i\big(|X_t| \odot ({\bf 1}_d - e_j)\big)dL_t^0(X^j), 
\end{split}
\end{equation}
where ${\bf 1}_d$ denotes the $d$-dimensional vector of ones and $\odot$ represents componentwise product.
Next, we write $J_f$ for the Jacobian matrix of the function $f$, and define the standard $d$-dimensional Brownian Motion $W = \int_0^\cdot \mathrm{sign}(X_t)dB_t$, where $\sign(x)$ denotes the diagonal matrix with entries $\sign(x)^{ii} = \sign(x^i)$. We can write the differential of the process $F(X)$ above in matrix-vector form as
\begin{equation} \label{eq:DF_quadrant_vector}
dF(X_t) = \big(J_f(|X_t|)\sign(X_t) \alpha(X_t) + \tfrac{1}{2} \Delta f(|X_t|)\big)dt + J_f(|X_t|)dW_t +  J_f(|X_t|)dL_t^0(X),
\end{equation}
where we set $\Delta f = (\Delta f^1, \dots, \Delta f^d)^\top$, with $\Delta$ denoting the Laplacian operator, and $L^0(X) = (L^0(X^1),\dots,L^0(X^d))^\top $.
Comparing term by term with the RSDE \eqref{eq:RSDE} leads us to the conditions
\begin{align}
    J_f(y) & = \sigma\big(f(y)\big), & \text{for } y \in D, \label{eq:nonlinear_PDE_quadrant}\\
     b\big(f(|x|)\big) & = J_f(|x|)\sign(x) \alpha(x) + \tfrac{1}{2}\Delta f(|x|), & \text{for } x \in \R^d, \label{eq:drift_equation_quadrant} \\
    d\Phi^i_t & = \sum_{j=1}^d (J_f)^{ij}\big(|X_t|\odot ({\bf 1}_d-e_j)\big)dL^0_t(X^j) & \text{for } i =1,\dots,d,\quad t \geq 0.\label{eq:Phi_equation_quadrant}
\end{align}
In a manner analogous to the one-dimensional case, if we can find a function $f = (f^1,\dots,f^d)^\top$ with components $f^i:[0,\infty) \to [0,\infty)$, $i=1,\dots,d$ satisfying the first-order PDE \eqref{eq:nonlinear_PDE_quadrant}, then \eqref{eq:drift_equation_quadrant} can be satisfied by taking
\begin{equation}\label{eq:alpha_implicit_quadrant} 
\alpha(x) = \sign(x)J_f^{-1}(|x|)\Big(b\big(f(|x|)\big) - \tfrac{1}{2}\Delta f(|x|)\Big).
\end{equation}

 Hence, we focus on the first-order nonlinear PDE \eqref{eq:nonlinear_PDE_quadrant}. To uncover what boundary conditions should be enforced, we inspect \eqref{eq:Phi_equation_quadrant}. Since $Y = f(|X|)$, and the local times of the components of $X$ at zero appear in the decomposition of $f(|X|)$, it is clear that for the reflecting term $\Phi$ to accumulate only on the boundary, we need to have $X^i_t = 0 \implies f^i(|X_t|) = 0$ for every $i=1,\dots,d$; and this leads to the condition 
\begin{equation} \label{eq:f_boundary_quadrant}
     f^i\big(y \odot ({\bf 1}_d-e_i)\big) = 0, \qquad  \text{for } i=1,\dots,d, \quad  y \in D.
\end{equation}
We will see below that an appropriate condition on $\sigma$ at the boundary, namely \eqref{eq:sigma_offdiagonal_boundary_condition}, ensures that all of the local-time terms in \eqref{eq:Phi_equation_quadrant} vanish except the one corresponding to $X^i$, and that the surviving term renders $\Phi$ inward-pointing; we make this precise after determining the solvability conditions for \eqref{eq:nonlinear_PDE_quadrant}. 

We now turn our attention to solving the partial differential equation (PDE) \eqref{eq:nonlinear_PDE_quadrant}. To this end, we assume that the dispersion coefficient $\sigma$ is globally Lipschitz continuous and that the symmetric part of $\sigma(y)$ is positive definite for every $y \in D$; that is,
\begin{equation} \label{eq:pd}
v^\top \big(\sigma(y) + \sigma(y)^\top\big) v > 0 \qquad \text{for every } v \in \R^d\setminus \{0\}.
\end{equation} In particular, this implies that $\sigma(y)$ is an invertible matrix for every $y \in D$. 
In general, the system \eqref{eq:nonlinear_PDE_quadrant} is overdetermined, but we will be able to find necessary and sufficient conditions on $\sigma$ that ensure a solution exists. To uncover these conditions, we first decompose the PDE \eqref{eq:nonlinear_PDE_quadrant} into the system of ordinary differential equations (ODEs), 
\begin{equation} \label{eq:nonlinear_ODE_quadrant}
    \partial_j f(y) = \begin{pmatrix}
    \partial_j f^1(y) \\ \vdots \\ \partial_j f^d(y)
\end{pmatrix} = \begin{pmatrix}
    \sigma^{1j}(f^1(y), \dots , f^d(y)) \\ \vdots \\ \sigma^{d\,\!j}(f^1(y),\dots, f^d(y))
\end{pmatrix} = \sigma^{\cdot j}\big(f(y)\big), \qquad j = 1,\dots,d,
\end{equation}
where $\sigma^{\cdot j}$ denotes the $j$th column of $\sigma$. For each $j$, this is a coupled system of nonlinear ODEs with $y^j$ as the variable. We need to assume that
\begin{equation} \label{eq:sigma_boundary_conditions} 
\sigma^{ij}\big(y\odot({\bf 1}_d-e_i)\big) \geq 0, \qquad \forall\ y \in D, \quad i,j=1,\dots,d,
\end{equation}
which ensures that the columns of $\sigma$ at the boundary are pointing inward. Indeed, if \eqref{eq:sigma_boundary_conditions} fails, the flow defining $f$ would exit the orthant, so that $f$ could not possibly take values in the orthant and be a global solution (see \cite[Theorem~8.5.11]{hormander2003the}).

To make progress, we first consider this problem with only the initial condition $f(0) = 0$ imposed, rather than the more restrictive condition \eqref{eq:f_boundary_quadrant}. Under the global Lipschitz condition on $\sigma$ and the condition \eqref{eq:sigma_boundary_conditions}, we obtain flow maps $\eta^j_t(z)$ induced by the ODEs \eqref{eq:nonlinear_ODE_quadrant}, which satisfy for all $t \geq 0$ and $z \in D$,
\begin{equation} \label{eqn:flow_ODE}
    \frac{d}{dt}\eta^j_t(z) = \sigma^{\cdot j}\big(\eta^j_t(z)\big); \quad \eta^j_0(z) = z, \qquad j=1,\dots,d.
\end{equation}
Now, 
initiating at $z = 0$ and flowing along the $y^1$-axis, any solution $f$ of \eqref{eq:nonlinear_PDE_quadrant} with $f(0) = 0$ must satisfy $f(y^1e_1) = \eta^1_{y^1}(0)$. Next, flowing in the $y^2$ direction starting from $z = \eta^1_{y^1}(0)$ gives $f(y^1e_1+y^2e_2) = \eta^2_{y^2}(\eta^1_{y^1}(0))$. Proceeding in this way, we obtain the representation  $f(y) = (\eta^d_{y^d} \circ \cdots \circ \eta^1_{y^1})(0)$. Of course, one can permute the order in which the flows are applied, and arrive at the representation  
\begin{equation} \label{eq:f_quadrant}
f(y) = \Big(\eta^{p(d)}_{y^{p(d)}} \circ \cdots \circ \eta^{p(1)}_{y^{p(1)}}\Big)(0), \quad y \in D,
\end{equation} 
for any permutation $p(\cdot)$ of $\{1,\dots,d\}$.
 It follows that a necessary condition for our PDE \eqref{eq:nonlinear_PDE_quadrant} to have a solution, is for the flow maps in \eqref{eqn:flow_ODE} to commute; that is, we need to have \[\eta^i_{y^i} \circ \eta^j_{y^j} = \eta^j_{y^j} \circ \eta^i_{y^i} \qquad \text{for every } i,j=1,\dots,d.\]

Standard computations,  which compare second cross derivatives taken in different order, yield that these flows commute if, and only if, the \emph{Frobenius condition}
\[\sum_{\ell=1}^d \big(\sigma^{\ell i}(y)\partial_\ell \sigma^{kj}(y) - \sigma^{\ell j}(y)\partial_\ell\sigma^{ki}(y)\big) = 0 \qquad \text{for  a.e.\ } y \in D \quad  \text{and all} \quad i,j,k=1,\dots,d  \]
holds. More compactly, this condition can be written as
\begin{equation} \label{eq:Lie_bracket} [\sigma^{\cdot i},\sigma^{\cdot j}]=0, \qquad i,j=1,\dots,d,
\end{equation}
where $[\cdot,\cdot]$ denotes the Lie bracket of two vector fields and the equality in \eqref{eq:Lie_bracket} is understood to hold almost everywhere. In this case, the function $f$ of \eqref{eq:f_quadrant} is well-defined and satisfies the nonlinear, first-order PDE \eqref{eq:nonlinear_PDE_quadrant} together with the initial condition $f(0) = 0$. 

We now obtain conditions on $\sigma$, so that \eqref{eq:f_boundary_quadrant} holds. By differentiating $f^i$ in the $j$th coordinate for $j \ne i$, we see from \eqref{eq:f_boundary_quadrant} and \eqref{eq:nonlinear_PDE_quadrant} that we must have
\[ 0  = \partial_jf^i\big(y\odot ({\bf 1}_d-e_i)\big) = \sigma^{ij}\Big(f\big(y\odot ({\bf 1}_d-e_i)\big)\Big), \qquad \forall\ y \in D, \quad i \ne j.\]
In Lemma~\ref{lem:ODE_quadrant} below, we will show that the range of $f$ is all of $D$, which leads to the compatibility requirement
\begin{equation} \label{eq:sigma_offdiagonal_boundary_condition}
    \sigma^{ij}\big(y \odot ({\bf 1}_d - e_i)\big) = 0, \qquad \forall\ y \in D, \quad i \ne j.
\end{equation}
This is a substantial strengthening of the condition \eqref{eq:sigma_boundary_conditions} for the off-diagonal entries of $\sigma$. Note, however,  that the diagonal terms satisfy 
\[\sigma^{ii}\big(y\odot({\bf 1}_d-e_i)\big) > 0 \qquad \text{for all }y \in D \text{ and } i=1,\dots,d,\] since the symmetric part of $\sigma(y)$ is positive definite for all $y \in D$, as required by \eqref{eq:pd}.
 
We aggregate the conclusions of this discussion in the following result, whose proof we defer to Appendix~\ref{sec:orthant_proof}. \begin{lem} \label{lem:ODE_quadrant}
    Let $\sigma:D \to \R^{d \times d}$ be a globally Lipschitz continuous function, whose symmetric part is positive definite. Then a solution $f:D \to D$ to the nonlinear, first-order PDE \eqref{eq:nonlinear_PDE_quadrant} satisfying \eqref{eq:f_boundary_quadrant} exists if, and only if, $\sigma$ satisfies the Frobenius condition \eqref{eq:Lie_bracket} and the compatibility requirement \eqref{eq:sigma_offdiagonal_boundary_condition}. 
    In this case the solution $f$ is unique, belongs to $W^{2,\infty}_{\mathrm{loc}}(D)$, has all of $D$ as its range, and admits the representation  \eqref{eq:f_quadrant}. 
\end{lem}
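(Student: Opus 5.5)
We prove the two directions separately. For necessity, suppose $f:D\to D$ solves \eqref{eq:nonlinear_PDE_quadrant} and satisfies \eqref{eq:f_boundary_quadrant}. Since $\sigma$ is Lipschitz and $f$ is $C^1$ with $J_f=\sigma\circ f$ Lipschitz on compacts, $f\in W^{2,\infty}_{\mathrm{loc}}(D)$. Symmetry of the weak second derivatives, $\partial_i\partial_j f=\partial_j\partial_i f$ a.e., together with the chain rule $\partial_i(\sigma^{\cdot j}\circ f)=\sum_\ell \partial_\ell\sigma^{\cdot j}(f)\,\sigma^{\ell i}(f)$ (valid a.e. for Lipschitz $\sigma$ composed with the bi-Lipschitz map $f$), yields $[\sigma^{\cdot i},\sigma^{\cdot j}]=0$ a.e. on $f(D)$. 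Differentiating \eqref{eq:f_boundary_quadrant} in $y^j$, $j\neq i$, gives $\sigma^{ij}(f(y\odot({\bf 1}_d-e_i)))=0$. Both conclusions transfer to all of $D$ once we show $f(D)=D$, with faces mapped onto faces; this surjectivity is the same fact needed for the converse direction.

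For sufficiency and uniqueness, \eqref{eq:sigma_boundary_conditions} follows from \eqref{eq:sigma_offdiagonal_boundary_condition} and the positivity of the diagonal. Hence the flows $\eta^j$ of \eqref{eqn:flow_ODE} are globally defined (Lipschitz) and leave $D$ invariant. Commutation of the flows follows from the vanishing Lie bracket. We would argue this via the standard identity
\[
\frac{d}{ds}\,(\eta^i_{-s})_*\sigma^{\cdot j}=(\eta^i_{-s})_*[\sigma^{\cdot i},\sigma^{\cdot j}].
\]
Because $\sigma$ is only Lipschitz and the bracket vanishes only a.e., we first mollify $\sigma$. The brackets of the mollified fields then tend to zero in $L^1_{\mathrm{loc}}$, since each is a product of a continuous function with an $L^\infty$ derivative and the commutator is handled as in DiPerna--Lions. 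Gronwall then shows the commutator of the flows tends to zero.

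We expect this Lipschitz-regularity step to be the main obstacle. A cleaner fallback is to note that Lipschitz flows are differentiable a.e., and to check directly that $s\mapsto \eta^i_{-s}\circ\eta^j_t\circ\eta^i_s$ has zero derivative for a.e. starting point, using Rademacher's theorem along trajectories.

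Given commutation, $f$ defined by \eqref{eq:f_quadrant} is independent of the order of the flows. Moving $\eta^j$ to the last position shows $\partial_j f(y)=\sigma^{\cdot j}(f(y))$, which is the PDE. Uniqueness follows because any solution with $f(0)=0$ must agree with the composition of flows along coordinate paths, by ODE uniqueness. For \eqref{eq:f_boundary_quadrant}, we need $f^i=0$ when $y^i=0$. Starting at $0$ and using flows $\eta^j$ with $j\neq i$ only, condition \eqref{eq:sigma_offdiagonal_boundary_condition} gives $\sigma^{ij}=0$ on $\{y^i=0\}$. Thus the face $\{y^i=0\}$ is invariant under $\eta^j$ for $j\ne i$, and so $f^i(y\odot({\bf 1}_d-e_i))=0$.

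For surjectivity, $J_f=\sigma(f)$ is invertible, and its symmetric part being positive definite gives monotonicity. Since $\sigma$ is Lipschitz, the map $f$ is a local diffeomorphism up to the boundary, and it maps each face into itself. We show the map is proper: by \eqref{eq:pd} and a Lipschitz bound, $\|\sigma^{\cdot j}\|$ grows at most linearly, so flows take finite time to reach bounded sets and $|f(y)|\to\infty$ as $|y|\to\infty$. We then argue by induction on dimension over faces. On each face $f$ is a proper local homeomorphism of a contractible manifold with corners onto the corresponding face. It is therefore a covering map, hence a bijection, and so $f(D)=D$. Finally, $W^{2,\infty}_{\mathrm{loc}}$ regularity follows since $J_f=\sigma\circ f$ is a composition of Lipschitz maps.
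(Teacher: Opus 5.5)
\textbf{Assessment.} Most of your plan follows the paper's discussion in Subsection~\ref{sec:orthant}: the flow construction, order-independence of \eqref{eq:f_quadrant} from commutation, recovering the PDE by moving $\eta^j$ last, uniqueness by ODE uniqueness, face invariance from \eqref{eq:sigma_offdiagonal_boundary_condition}, and necessity by symmetry of second derivatives and differentiating \eqref{eq:f_boundary_quadrant}. You are more careful than the paper about commuting Lipschitz flows, which it calls standard.

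\textbf{The gap: properness.} The genuine gap is in the surjectivity step, specifically the claim that $f$ is proper. Linear growth of $\|\sigma^{\cdot j}\|$ only rules out finite-time blow-up. It gives an exponential \emph{upper} bound on $|f(y)|$ and cannot yield $|f(y)|\to\infty$. For that you would need a lower bound on how far the flows carry you. But \eqref{eq:pd} is only pointwise, not uniform. Along $\eta^j$ the $j$th coordinate increases at rate $\sigma^{jj}$, which is bounded below only on compacts. Nothing in your argument stops the intermediate points $f(y^k e_k+\dots+y^d e_d)$ of the composition \eqref{eq:f_quadrant} from leaving a fixed compact set and coming back. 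Without properness, the chain ``proper local homeomorphism $\Rightarrow$ covering $\Rightarrow$ bijection'' never starts. Your necessity direction also needs $f(D)=D$ with faces onto faces, so both directions are affected.

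\textbf{The missing idea.} Work on the image side. By \eqref{eq:nonlinear_PDE_quadrant}, any local inverse $\phi$ of $f$ satisfies $J_\phi(w)=\sigma(w)^{-1}$. So the lift of the segment $s\mapsto sy$ has derivative $\sigma^{-1}(sy)y$, equals $\int_0^s\sigma^{-1}(uy)y\,du$, and is bounded by $|y|\max_{0\le u\le 1}\|\sigma^{-1}(uy)\|$. This holds because $\sigma^{-1}$ is evaluated only on the compact segment $L_y=\{sy:0\le s\le 1\}$.

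The paper uses exactly this in an open--closed argument on the set $E$ of $y$ for which $f$ has a $C^1$ inverse near $L_y$. Closedness comes from these explicit, uniformly bounded lifts; local inverses are then patched along $L_{y_\infty}$.

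\textbf{Repairing your route.} The same idea also fixes your covering argument. Take $x\in D$ and $w=f(x)$, and lift the segment from $w$ back to $0$ starting at $x$. Its velocity is $-\sigma^{-1}((1-s)w)w$, so the lift exists on $[0,1]$ and ends at $x-\int_0^1\sigma^{-1}(uw)w\,du$, a preimage of $0$. By \eqref{eq:pd}, $t\mapsto y^\top f(ty)$ is strictly increasing, so $f(y)\neq 0$ for $y\in D\setminus\{0\}$ and $0$ is the only preimage of $0$. Hence $x=\int_0^1\sigma^{-1}(uw)w\,du$. This gives properness, $|x|\le|f(x)|\max_{0\le u\le 1}\|\sigma^{-1}(uf(x))\|$, and in fact injectivity as well.
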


Returning to \eqref{eq:Phi_equation_quadrant}, the local time term indexed by $j$ has integrand given by the expression
$(J_f)^{ij}(|X_t|\odot({\bf 1}_d - e_j)) = \sigma^{ij}(f(|X_t| \odot ({\bf 1}_d - e_j)))$, which vanishes on the set $\{y^j = 0\}$ for $i \ne j$ by \eqref{eq:sigma_offdiagonal_boundary_condition}.
With these preparations in hand, we are ready to establish a folding representation for \eqref{eq:RSDE} on the positive orthant with normal reflection.
\begin{theorem} \label{thm:main_quadrant}
 Consider measurable functions $b:D \to \R^d$ and $\sigma:D \to \R^{d \times d}$, such that $\sigma$ satisfies the assumptions of Lemma~\ref{lem:ODE_quadrant}. 
Further, suppose that the function 
\begin{equation} \label{eq:xi}
\xi(y) = \sigma^{-1}(y)\bigg(b(y) - \tfrac{1}{2}\sum_{j=1}^d (\sigma^{\cdot j}\cdot\nabla)\sigma^{\cdot j}(y)\bigg), \qquad y \in D,
\end{equation} is bounded, where
$ (\sum_{j=1}^d (\sigma^{\cdot j}\cdot\nabla)\sigma^{\cdot j})^i = \sum_{j,\ell=1}^d \sigma^{\ell j}\,\partial_\ell\sigma^{ij}$ for $i=1,\dots,d$.
Then,
\begin{enumerate}[label = (\roman*),noitemsep]
    \item \label{item:f_orthant} the function $f:D \to D$ of \eqref{eq:f_quadrant} is well-defined, of class $W^{2,\infty}_{\mathrm{loc}}(D)$, has $D$ as its range, and is the unique solution of the PDE \eqref{eq:nonlinear_PDE_quadrant} satisfying \eqref{eq:f_boundary_quadrant} on the boundary;
    \item with $\xi,f$ as above and
    \begin{equation} \label{eq:alpha_quadrant}
\alpha(x) = \sign(x)\, \xi\big(f(|x|)\big), \qquad \text{for a.e. } x \in \R^d,
    \end{equation}
 the SDE \eqref{eq:X_SDE} has a pathwise unique, strong solution for every initial value $X_0 \in \R^d$;
    \item the process $Y = f(|X|)$ satisfies the RSDE \eqref{eq:RSDE} on $D$ with initial condition $Y_0 = f(|X_0|)$, Brownian Motion $W = \int_0^\cdot\sign(X_t)dB_t$, and normally reflecting processes
    \begin{equation}  \label{eq:Phi_representation_quadrant}
    \Phi^i = \int_0^\cdot \sigma^{ii}\Big(f\big(|X_t|\odot ({\bf 1}_d - e_i)\big)\Big)dL_t^0(X^i), \qquad i=1,\dots,d.
    \end{equation}  
    \end{enumerate}
\end{theorem}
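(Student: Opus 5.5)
The proof will follow the template of Theorem~\ref{thm:main_half_line}, with Lemma~\ref{lem:ODE_quadrant} replacing Lemma~\ref{lem:ODE}. Item~\ref{item:f_orthant} is a direct restatement of Lemma~\ref{lem:ODE_quadrant}, since $\sigma$ is assumed to satisfy its hypotheses. From $J_f=\sigma\circ f$ and $f\in W^{2,\infty}_{\mathrm{loc}}(D)$, the chain rule for Lipschitz $\sigma$ composed with $f$ gives, for a.e.\ $y$,
\[
\partial_{jj}f^i(y)=\sum_{\ell=1}^d \partial_\ell\sigma^{ij}\big(f(y)\big)\,\partial_j f^\ell(y)=\sum_{\ell=1}^d \sigma^{\ell j}\big(f(y)\big)\,\partial_\ell\sigma^{ij}\big(f(y)\big).
\]
Hence $\Delta f(y)=\sum_j(\sigma^{\cdot j}\cdot\nabla)\sigma^{\cdot j}(f(y))$. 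Substituting this into \eqref{eq:alpha_implicit_quadrant} and using $J_f^{-1}(|x|)=\sigma^{-1}(f(|x|))$ (which exists by \eqref{eq:pd}) shows that \eqref{eq:alpha_implicit_quadrant} coincides with \eqref{eq:alpha_quadrant}. This step needs care with null sets: $\partial_\ell\sigma$ exists only a.e., and its composition with $f$ is meaningful because $f$ is locally bi-Lipschitz. Indeed, $J_f=\sigma(f)$ is invertible with locally bounded inverse, so $f$ maps null sets to null sets and preimages of null sets are null.

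For item (ii), boundedness of $\xi$ gives a bounded measurable drift $\alpha$, and the noise is the identity. Pathwise uniqueness and strong existence then follow from Zvonkin/Veretennikov, i.e.\ the multidimensional analogue of \cite[Proposition~5.5.17]{karatzas1998brownian}.

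For item (iii), I will apply the It\^o\,--Tanaka computation \eqref{eq:dF_quadrant} componentwise. Since $f$ is only $W^{2,\infty}_{\mathrm{loc}}$, I will justify the formula by mollifying $f$ and passing to the limit. The convergence of the second-order terms comes from Krylov's estimate for the nondegenerate process $X$ with bounded drift, or equivalently from Girsanov reducing to Brownian motion, whose law is absolutely continuous in time-space. Cross terms $\partial_{jk}$ do not appear because the $B^j$ are independent. The derivative in $y^j$ of $y\mapsto f^i(y)$ evaluated on $\{y^j=0\}$ gives the local-time coefficient $\partial_jf^i(|X_t|\odot(\mathbf 1_d-e_j))$, since $dL^0(X^j)$ charges only $\{X^j=0\}$.

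Matching terms in \eqref{eq:DF_quadrant_vector} then uses three facts. First, the drift equals $b(Y_t)$ by construction of $\alpha$. Second, the martingale part is $\sigma(Y_t)\,dW_t$ with $W=\int\sign(X)dB$, which is Brownian by L\'evy's characterization. Third, the reflection terms are $\sum_j\sigma^{ij}(f(|X_t|\odot(\mathbf 1_d-e_j)))\,dL^0_t(X^j)$. By \eqref{eq:sigma_offdiagonal_boundary_condition}, combined with \eqref{eq:f_boundary_quadrant}, which says $f(|X_t|\odot(\mathbf 1_d-e_j))$ lies in $\{y^j=0\}$, the terms with $j\neq i$ vanish. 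This leaves \eqref{eq:Phi_representation_quadrant}.

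Finally, I check the required properties of $\Phi$. The diagonal entries $\sigma^{ii}>0$ on $\{y^i=0\}$ by \eqref{eq:pd}, so each $\Phi^i$ is nondecreasing. Moreover $d\Phi^i$ is carried by $\{X^i_t=0\}\subset\{Y^i_t=0\}$, again by \eqref{eq:f_boundary_quadrant}. Since the $\Phi^i$ are nondecreasing, the total variation satisfies $d\TV{\Phi}\ge d\Phi^i$, so each $d\Phi^i$ is absolutely continuous with respect to $d\TV{\Phi}$. This yields the normal-reflection representation \eqref{eq:bivariate_Phi}, with $n(Y)$ understood at corners as a combination of the active $e_i$. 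I expect the main technical obstacle to be the justification of the generalized It\^o formula for the merely $W^{2,\infty}_{\mathrm{loc}}$ folding map together with the a.e.\ identification of $\Delta f$. The remaining steps are term matching.
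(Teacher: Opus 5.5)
Your overall route is the paper's. You get item (i) from Lemma~\ref{lem:ODE_quadrant}, use $\Delta f=\sum_j(\sigma^{\cdot j}\cdot\nabla)\sigma^{\cdot j}(f)$ to identify \eqref{eq:alpha_implicit_quadrant} with \eqref{eq:alpha_quadrant}, invoke Veretennikov for (ii), and run the It\^o--Tanaka computation \eqref{eq:dF_quadrant} for (iii). Your extra care with null sets and with the generalized It\^o formula (mollification plus Krylov's estimate) is fine.

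The step that fails is the vanishing of the cross local-time terms. In $dY^i$, the term driven by $L^0(X^j)$ has coefficient $\partial_j f^i=\sigma^{ij}(f)$ evaluated on the face $\{y^j=0\}$. So what you need is that the off-diagonal entries of the $j$-th \emph{column} of $\sigma$ vanish on the $j$-th face:
\[
\sigma^{ij}\big(y\odot(\mathbf{1}_d-e_j)\big)=0,\qquad y\in D,\quad i\neq j.
\]
By contrast, \eqref{eq:sigma_offdiagonal_boundary_condition} says $\sigma^{ij}(y\odot(\mathbf{1}_d-e_i))=0$: the off-diagonal entries of the $i$-th \emph{row} vanish on the $i$-th face. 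That is exactly the tangential condition $\partial_j f^i=0$ on $\{y^i=0\}$ that yields \eqref{eq:f_boundary_quadrant}, and it is the transposed condition, not the one you need. Your sentence ``by \eqref{eq:sigma_offdiagonal_boundary_condition}, combined with \eqref{eq:f_boundary_quadrant}, \dots the terms with $j\neq i$ vanish'' therefore applies the hypothesis on the wrong face.

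The other hypotheses do not close this gap. Take $d=2$, small $\epsilon>0$, $f(y)=\big(y^1+\epsilon\tanh(y^1)\tanh(y^2),\,y^2\big)$ and $\sigma:=J_f\circ f^{-1}$. Then:
\begin{itemize}
\item $f$ is a diffeomorphism of $D$ onto $D$ satisfying \eqref{eq:f_boundary_quadrant};
\item $\sigma$ is globally Lipschitz with positive definite symmetric part;
\item \eqref{eq:Lie_bracket} holds automatically, since the columns of $\sigma$ are push-forwards of the coordinate fields;
\item \eqref{eq:sigma_offdiagonal_boundary_condition} holds, because $\sigma^{21}\equiv 0$ and $\sigma^{12}(0,z^2)=0$.
\end{itemize}
Yet $\sigma^{12}(z^1,0)=\epsilon\tanh(z^1)\neq 0$. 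Choose $b$ so that $\xi\equiv 0$; then $X$ is a Brownian Motion. In that case $\Phi^1$ contains the extra term $\epsilon\int_0^\cdot \tanh(|X^1_t|)\,dL^0_t(X^2)$, which is nonzero as soon as $X^2$ has visited $0$. So $Y$ reflects obliquely, in direction $\sigma^{\cdot 2}$, on $\{y^2=0\}$, and both \eqref{eq:Phi_representation_quadrant} and \eqref{eq:bivariate_Phi} fail.

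The paper's proof, and the remark preceding the theorem, rest on the same assertion. So you have reproduced its argument faithfully, but the step is not valid as written. To repair it, either add the column condition displayed above as a hypothesis, or weaken the conclusion to oblique reflection with direction $\sigma^{\cdot j}$ on $\{y^j=0\}$.
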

\begin{proof}
    Item \ref{item:f_orthant} is simply a restatement of Lemma~\ref{lem:ODE_quadrant}. The expression for $\alpha$ in \eqref{eq:alpha_quadrant} is precisely the expression \eqref{eq:alpha_implicit_quadrant}, which we simplified by using the facts $J_f^{-1}(|x|) = \sigma^{-1}(f(|x|))$, courtesy of \eqref{eq:nonlinear_PDE_quadrant}, and $\Delta  f^i = \sum_{j,\ell=1}^d \sigma^{\ell j} (f)\partial_\ell \sigma^{i j}(f) = (\sum_{j=1}^d ((\sigma^{\cdot j} \cdot \nabla)\sigma^{\cdot j}))^i(f)$, obtained by differentiating \eqref{eq:nonlinear_PDE_quadrant}. By assumption, $\alpha$ is bounded so \eqref{eq:X_SDE} has a pathwise unique, strong solution for every initial condition $X_0 \in \R^d$ (see \cite{veretennikov1981on}).
    Now, the computation \eqref{eq:dF_quadrant} shows that $F(X) = f(|X|)$ satisfies \eqref{eq:DF_quadrant_vector}. This establishes that the diffusion process $Y = f(|X|)$ has $b(\cdot)$ and $\sigma(\cdot)$ as its drift and dispersion coefficients, courtesy of \eqref{eq:nonlinear_PDE_quadrant} and \eqref{eq:drift_equation_quadrant} being satisfied. 
    
    The expression for the reflection term \eqref{eq:Phi_representation_quadrant} follows because $J_f(y) = \sigma (f(y))$ and the boundary condition \eqref{eq:sigma_offdiagonal_boundary_condition} on $\sigma$ holds; the latter ensures the off-diagonal terms appearing in \eqref{eq:Phi_equation_quadrant} vanish. Note that, since $X^i_t = 0 \implies Y^i_t = 0$, courtesy of \eqref{eq:f_boundary_quadrant}, we have $\mathrm{supp}(d\Phi_\cdot) \subset \{t\geq 0: Y_t \in \partial D\}$. Moreover,  the positivity of the diagonal entries of $\sigma$ ensures that the reflecting term $\Phi$ is inward-pointing; that is, \eqref{eq:bivariate_Phi} holds. This establishes that $Y = f(|X|)$ solves \eqref{eq:RSDE} on $D$ with normal reflection, and completes the proof. 
    \end{proof}

    \subsection{The unit hypercube} \label{sec:hypercube} In this subsection, we study the folding representation problem on the domain $D = [0,1]^d$. That is, we seek again a folding representation of the form \eqref{eq:goal} for the process $Y$ satisfying the RSDE \eqref{eq:RSDE} and with a suitable $d$-dimensional diffusion $X$, when the domain $D$ is the unit hypercube. As with the orthant, we study this problem with normal reflection, which requires the reflection process $\Phi$ to be of finite variation, carried on the set $\{t \geq 0: Y_t \in \partial D\}$, and to satisfy
    \begin{equation} \label{eq:cube_Phi} 
    \Phi_t = \sum_{i=1}^d \int_0^t 1_{\{Y^i_s = 0\}}e_i d\TV{\Phi}_s -  \sum_{i=1}^d \int_0^t 1_{\{Y^i_s =1 \}}e_i d\TV{\Phi}_s, \qquad t \ge 0. 
    \end{equation}
    As in the case of the unit interval in one dimension, we will need additional degrees of freedom to ensure that the folding representation we construct has the correct boundary behavior on the newly introduced faces $\{y \in D: y^i=1\}$ for $i=1,\dots,d$. For this reason we consider $\R^d$-valued diffusions $X$ satisfying SDEs of the form \eqref{eq:X_SDE_nu}, where the drift coefficient $\alpha:\R^d \to \R^d$ and the constant diagonal dispersion matrix $\nu$ with positive diagonal entries $\nu^{ii} > 0$ are to be determined.

    Taking inspiration from the unit interval case of Subsection~\ref{sec:unit_interval}, we consider a folding function of the form 
    \[F(x) = \big(f^1\big(|\overline x|),\dots,f^d(|\overline x|)\big)^\top,\]
    where $\overline x = x- \Even(x) =  (x^1-\Even(x^1),\dots,x^d-\Even(x^d))^\top$. Using the It\^o\,--Tanaka formula on each component $F^i(X)$ of $F(X)$ leads, in a way similar to the computations of \eqref{eq:Ito_tanaka_interval} and \eqref{eq:dF_quadrant}, to the dynamics
    \begin{equation} \label{eq:Ito_tanaka_cube}
    \begin{aligned}
        dF(X_t) & = \big(J_f(|\overline X_t|)\sign(\overline X_t) \alpha(X_t) + \tfrac{1}{2}\Delta_\nu f(|\overline X_t|)\big) dt + J_f(|\overline X_t|)\nu dW_t \\
        & \qquad  + \sum_{n \in \Z} J_f(|\overline X_t|)dL^{2n}_t(X) -\sum_{n \in \Z} J_f(|\overline X_t|)dL^{2n+1}_t(X).
    \end{aligned}
    \end{equation}
    Here, we used again the notation $J_f$ for the Jacobian matrix of the function $f$, introduced the notation $\Delta_\nu f = (\Delta_\nu f^1,\dots,\Delta_\nu f^d)^\top$ for $\Delta_\nu f^i = \sum_{j=1}^d (\nu^{jj})^2 \partial_{jj} f^i$, defined the $d$-dimensional Brownian Motion $W = \int_0^\cdot \sign(\overline X_t)dB_t$, and, for any $a \in \R$, set $L_t^a(X) = (L_t^a(X^1),\dots,L_t^a(X^d))^\top$ to be a vector of local times.
    Comparing coefficients with the RSDE \eqref{eq:RSDE} leads to the PDE
    \begin{equation} \label{eq:nonlinear_PDE_cube}
    J_f(y) = \sigma\big(f(y)\big)\nu^{-1}, \qquad y \in D. 
    \end{equation}
    As in the case of the orthant, we impose the Lie bracket condition \eqref{eq:Lie_bracket} on $\sigma$, because this is a necessary condition for a solution to \eqref{eq:nonlinear_PDE_cube} to exist (note that $[\sigma^{\cdot i}, \sigma^{\cdot j}] = 0 \iff [(\sigma \nu^{-1})^{\cdot i}, (\sigma \nu^{-1})^{\cdot j}] = 0$ for any constant diagonal matrix $\nu$ with positive coefficients). By inspecting the local time terms in \eqref{eq:Ito_tanaka_cube}, it is clear that we need to enforce the boundary conditions
    \begin{equation} \label{eq:f_boundary_cube}
    f^i\big(y \odot ({\bf 1}_d - e_i)\big) = 0 \quad \text{and} \quad  f^i\big(y \odot ({\bf 1}_d - e_i)+e_i\big) = 1, \qquad \forall\ y \in D,\quad i=1,\dots,d, 
    \end{equation} in order to ensure that the reflecting term is supported on the boundary of the domain. Indeed, the first (resp., second) condition in \eqref{eq:f_boundary_cube} stipulates, that $f^i(y)$ equals zero (resp., one) whenever $y^i$ equals zero (resp., one), which is precisely the value that $y^i = x^i -\Even(x^i)$ takes when $x^i$ is an even (resp., odd) integer. Arguing by analogy with the orthant case in Subsection~\ref{sec:orthant}, we impose the requirement 
    \begin{equation} \label{eq:sigma_offdiagonal_boundary_condition_cube}
         \sigma^{ij}\big(y \odot ({\bf 1}_d-e_i)\big) = \sigma^{ij}\big(y \odot ({\bf 1}_d-e_i)+e_i\big) =0, \qquad \forall\ y\in D, \quad i \ne j 
    \end{equation} 
    on $\sigma$, which is the analogue of \eqref{eq:sigma_offdiagonal_boundary_condition} in the present setting. The requirement \eqref{eq:sigma_offdiagonal_boundary_condition_cube} will ensure that the solution to \eqref{eq:nonlinear_PDE_cube} we construct satisfies the boundary condition \eqref{eq:f_boundary_cube} on the faces of the cube.
    
    Now, proceeding as in the case of the orthant, given any fixed values $\nu^{ii} > 0$ for $i=1,\dots,d$ we can obtain a solution to \eqref{eq:nonlinear_PDE_cube}, which satisfies the first condition in \eqref{eq:f_boundary_cube}; that is, $f^i$ vanishes on the faces of the cube lying in the coordinate hyperplanes $\{y^i = 0\}$. Akin to the analysis of Subsection~\ref{sec:unit_interval} for the unit interval, we now seek values of $\nu$ that ensure the remaining boundary conditions in \eqref{eq:f_boundary_cube} are met. To this end, we note that we just have to guarantee that $f^i(e_i) = 1$  holds, since 
    \[\partial_j f^i(y \odot ({\bf 1}_d-e_i)+e_i) = \sigma^{ij}\big(f(y \odot ({\bf 1}_d-e_i)+e_i)\big) = 0, \qquad \forall\, y \in D, \quad i \ne j,
     \]
    courtesy of \eqref{eq:sigma_offdiagonal_boundary_condition_cube}, which ensures that $f^i$ is constant on faces of the cube where $y^i = 1$.
    From the PDE \eqref{eq:nonlinear_PDE_cube},  we see that
    \[ \partial_if^i(y^ie_i) = \frac{1}{\nu^{ii}} \sigma^{ii}\big(f(y^ie_i)\big) = \frac{1}{\nu^{ii}} \sigma^{ii}\big(f^i(y^ie_i)e_i\big), \qquad \forall\ y^i \in [0,1], \quad i=1,\dots,d, \]
    where in the final equality we used the fact that $f^j(y) = 0$  if $y^j = 0$.
    Proceeding as in the proof of Lemma~\ref{lem:ODE_nu}, we see that 
    \[\frac{1}{\nu^{ii}} = \int_0^1 \frac{\partial_i f^i(ue_i)}{\sigma^{ii}\big(f^i(ue_i)e_i\big)}du = \int_0^{f^i(e_i)}\frac{1}{\sigma^{ii}(ue_i)}du\]
    must hold, from which we conclude that $f^i(e_i) = 1 \iff \nu^{ii}= (\int_0^1\frac{1}{\sigma^{ii}(ue_i)}du)^{-1}$. 
    
    We have established the following result.
    \begin{lem}
 \label{lem:ODE_cube}
    Let $\sigma:D \to \R^{d \times d}$ be a Lipschitz continuous function, whose symmetric part is positive definite in the manner of \eqref{eq:pd}. Then a  solution $f:D \to D$ of the PDE \eqref{eq:nonlinear_PDE_cube}  satisfying the boundary condition \eqref{eq:f_boundary_cube} exists if, and only if, $\sigma$ satisfies \eqref{eq:Lie_bracket} and \eqref{eq:sigma_offdiagonal_boundary_condition_cube}, and the constant diagonal matrix $\nu$ appearing in \eqref{eq:nonlinear_PDE_cube} has positive entries
    \begin{equation} \label{eq:nus_cube}
    \nu^{ii} = \bigg(\int_0^1\frac{1}{\sigma^{ii}(ue_i)}du\bigg)^{-1}, \qquad \text{for } i=1,\dots,d.
    \end{equation}
    In this case, the solution $f$ is unique, belongs to $W^{2,\infty}(D)$, has all of $D$ as its range, and is given by the expression  \eqref{eq:f_quadrant}, where the flow maps $\eta_t^j$ satisfy the system of ODEs
    \begin{equation} \label{eq:flows_cube}
        \frac{d}{dt}\eta^j_t(z) = \frac{1}{\nu^{jj}}\sigma^{\cdot j}\big(\eta^j_t(z)\big); \quad \eta^j_0(z) = z, \qquad j=1,\dots,d,
    \end{equation}
for each $z \in D$ and all $t \geq 0$ for which $\eta^j_t(z) \in D$.
\end{lem}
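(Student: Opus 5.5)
The plan is to reduce everything to the orthant result, Lemma~\ref{lem:ODE_quadrant}, applied to the rescaled dispersion $\tilde\sigma := \sigma\nu^{-1}$ for a fixed positive diagonal $\nu$. The bracket condition transfers because $[(\sigma\nu^{-1})^{\cdot i},(\sigma\nu^{-1})^{\cdot j}] = (\nu^{ii}\nu^{jj})^{-1}[\sigma^{\cdot i},\sigma^{\cdot j}]$. The off-diagonal condition on the faces $\{y^i=0\}$ transfers as well, and $\tilde\sigma$ keeps positive diagonal entries.

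The one complication is that Lemma~\ref{lem:ODE_quadrant} is stated for $\sigma$ defined on the whole orthant, while here $\sigma$ lives only on $[0,1]^d$. I would first extend $\sigma$ to a Lipschitz map on $[0,\infty)^d$ that preserves \eqref{eq:Lie_bracket}, \eqref{eq:sigma_offdiagonal_boundary_condition} and \eqref{eq:pd}. Alternatively, I would rerun the flow construction \eqref{eq:f_quadrant} locally, using the flows \eqref{eq:flows_cube} only for as long as they stay in $D$; this is exactly why the statement restricts $t$ to times with $\eta^j_t(z)\in D$. I expect this localisation to be the main technical obstacle.

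To avoid a delicate extension, I would argue directly on the cube. The flows \eqref{eq:flows_cube} are well defined while they remain in $D$. The Frobenius condition makes them commute wherever defined, by the same cross-derivative computation as before. Hence \eqref{eq:f_quadrant} gives a well-defined $W^{2,\infty}$ solution on the set of $y$ for which all partial compositions stay in $D$.

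The heart of the argument is to show that this set is all of $D=[0,1]^d$, with $f(D)=D$, exactly when $\nu$ is chosen as in \eqref{eq:nus_cube}. First, along the axis, $f(y^ie_i)=f^i(y^ie_i)e_i$, because \eqref{eq:sigma_offdiagonal_boundary_condition_cube} kills the other components of $\sigma^{\cdot i}$ on the line $\{y^j=0,\ j\neq i\}$. So $f^i(\cdot\, e_i)$ solves the scalar problem of Lemma~\ref{lem:ODE_nu} with $\sigma^{ii}(\cdot\, e_i)$. That lemma gives $f^i(e_i)=1$ if and only if $\nu^{ii}$ is as in \eqref{eq:nus_cube}, which proves the necessity of \eqref{eq:nus_cube}.

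Next, by \eqref{eq:sigma_offdiagonal_boundary_condition_cube}, on the faces $\{z^i=0\}$ and $\{z^i=1\}$ the fields $\sigma^{\cdot j}$ with $j\neq i$ are tangent to the face. So the flows $\eta^j$ with $j\neq i$ preserve these faces, and each face of $D$ is invariant under the transverse flows. Meanwhile, $\eta^i$ moves the $i$th coordinate monotonically, since $\sigma^{ii}>0$ by \eqref{eq:pd}. Combining these facts with commutativity, I would argue inductively on the number of coordinates flowed. Starting from $0$, flowing $\eta^i$ for time $y^i\le1$ keeps $z^i\in[0,1]$: compare with the scalar ODE on the face structure and use uniqueness. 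The faces $\{z^i=1\}$ are reached exactly at $y^i=1$, which yields \eqref{eq:f_boundary_cube}.

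For surjectivity onto $D$ and uniqueness, I would repeat the invariance-of-domain and uniqueness arguments of Lemma~\ref{lem:ODE_quadrant}, namely uniqueness of ODE flows along coordinate paths. Necessity of \eqref{eq:Lie_bracket} and \eqref{eq:sigma_offdiagonal_boundary_condition_cube} follows as in the orthant discussion: equality of mixed partials of $f$, together with $J_f=\sigma(f)\nu^{-1}$, and differentiation of \eqref{eq:f_boundary_cube} along the faces, using that $f(D)=D$.
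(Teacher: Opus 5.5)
Your proposal is correct and follows essentially the paper's own argument. You fix $\nu$ and run the orthant flow construction for $\sigma\nu^{-1}$, using that the bracket condition is invariant under this rescaling. You use \eqref{eq:sigma_offdiagonal_boundary_condition_cube} to reduce the axis flow to the scalar problem of Lemma~\ref{lem:ODE_nu}, which gives $f^i(e_i)=1$ exactly when \eqref{eq:nus_cube} holds. You then use the vanishing of $\sigma^{ij}$, $j\neq i$, on $\{z^i=1\}$ to propagate $f^i=1$ across that face, and defer range and uniqueness to the orthant arguments.

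Your explicit handling of the fact that $\sigma$ lives only on $[0,1]^d$, via face invariance and monotonicity of the $i$th coordinate under $\eta^i$, is more careful than the paper, which simply says ``proceeding as in the case of the orthant''. Note that your induction on the number of flowed coordinates does not close on its own. Commutativity of the flows on a rectangle presupposes that the rectangle already lies in $D$, so that step needs a continuity or connectedness argument to be rigorous.
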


We are now ready to establish a folding representation on the hypercube.
\begin{theorem} \label{thm:main_cube}
    Consider measurable functions $b:D \to \R^d$ and $\sigma:D \to \R^{d \times d}$, such that $\sigma$ satisfies the assumptions of Lemma~\ref{lem:ODE_cube}. Furthermore, let constants $\nu^{ii}$ be given by \eqref{eq:nus_cube}, construct the diagonal matrix $\nu$ of these elements, and suppose that the function $\xi$ defined in \eqref{eq:xi} is bounded. Then,
\begin{enumerate}[label = (\roman*),noitemsep]
    \item \label{item:f_cube} the function $f:D \to D$ given by equation \eqref{eq:f_quadrant} with flow maps satisfying \eqref{eq:flows_cube} is well-defined, belongs to $W^{2,\infty}(D)$, has $D$ as its range, and is the unique solution to the PDE \eqref{eq:nonlinear_PDE_cube} satisfying the boundary condition \eqref{eq:f_boundary_cube};
    \item with 
    \begin{equation} \label{eq:alpha_cube}
    \alpha(x) = \nu\sign(\overline x)\, \xi\big(f(|\overline x|)\big), \qquad \text{for a.e. } x \in \R^d,
    \end{equation}
    and $\overline x = x - \Even(x)$, 
     the SDE \eqref{eq:X_SDE_nu} has a pathwise unique, strong solution $X$ for every initial value $X_0 \in \R^d$;
    \item the process $Y = f(|X-\Even(X)|)$ satisfies the RSDE \eqref{eq:RSDE} on $D$ with initial condition $Y_0 = f(|X_0-\Even(X_0)|)$, normally reflecting processes
    \begin{equation}
\label{eq:Phi_representation_cube}
\begin{aligned} 
\Phi_t^i  = &  \frac{1}{\nu^{ii}}\sum_{n \in \Z} \int_0^t \sigma^{ii}\Big(f\big(|X_s - \Even(X_s)|\odot ({\bf 1}_d - e_i)\big)\Big)dL_s^{2n}(X^i) \\
&  - \frac{1}{\nu^{ii}}\sum_{n \in \Z} \int_0^t \sigma^{ii}\Big(f\big(|X_s - \Even(X_s)|\odot ({\bf 1}_d - e_i)+e_i\big)\Big)dL_s^{2n+1}(X^i), 
    \end{aligned}
\end{equation}
for $i=1,\dots,d$,  $ t \geq 0$, 
    and Brownian Motion $W = \int_0^\cdot\sign(X_t-\Even(X_t))dB_t$. 
    \end{enumerate}
\end{theorem}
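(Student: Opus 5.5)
The plan follows the proof of Theorem~\ref{thm:main_quadrant}, combined with the periodic folding of Theorem~\ref{thm:main_bounded_interval}.

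\textbf{Item~\ref{item:f_cube}.} This is a restatement of Lemma~\ref{lem:ODE_cube}. The assumptions on $\sigma$ give the Frobenius condition \eqref{eq:Lie_bracket} and the compatibility requirement \eqref{eq:sigma_offdiagonal_boundary_condition_cube}, and $\nu$ is chosen as in \eqref{eq:nus_cube}. Hence $f$ of \eqref{eq:f_quadrant}, built from the rescaled flows \eqref{eq:flows_cube}, is the unique $W^{2,\infty}(D)$ solution of \eqref{eq:nonlinear_PDE_cube} with boundary values \eqref{eq:f_boundary_cube}, and its range is $D$.

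\textbf{Item (ii).} First I will simplify the drift. The It\^o--Tanaka computation \eqref{eq:Ito_tanaka_cube} shows that the drift of $F(X)$ matches $b$ exactly when
\[
\alpha(x) = \sign(\overline x)\, J_f^{-1}(|\overline x|)\big(b(f(|\overline x|)) - \tfrac12 \Delta_\nu f(|\overline x|)\big).
\]
By \eqref{eq:nonlinear_PDE_cube}, $J_f^{-1} = \nu\,\sigma^{-1}(f)$. Differentiating $\partial_j f^i = \sigma^{ij}(f)/\nu^{jj}$ once more gives $\partial_{jj} f^i = (\nu^{jj})^{-2}\sum_\ell \sigma^{\ell j}(f)\,\partial_\ell\sigma^{ij}(f)$ almost everywhere. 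Hence
\[
\Delta_\nu f^i = \Big(\sum_j (\sigma^{\cdot j}\cdot\nabla)\sigma^{\cdot j}\Big)^i(f),
\]
with the factors $\nu$ cancelling. Since $\nu$ and $\sign(\overline x)$ are diagonal they commute, so $\alpha = \nu\,\sign(\overline x)\,\xi(f(|\overline x|))$, which is \eqref{eq:alpha_cube}. Because $\xi$ is bounded, $\alpha$ is bounded and measurable, and the dispersion $\nu$ is a constant nondegenerate matrix. Veretennikov's theorem \cite{veretennikov1981on} then gives a pathwise unique strong solution for every initial value $X_0 \in \R^d$.

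\textbf{Item (iii).} I will apply It\^o--Tanaka to $F(X) = f(|\overline X|)$ componentwise, recovering \eqref{eq:Ito_tanaka_cube}. This is the main obstacle, because $F$ is only piecewise $W^{2,\infty}$ across the hyperplanes $\{x^j \in \Z\}$. I would handle it as in \eqref{eq:dF_quadrant}. Note that $x \mapsto |x - \Even(x)|$ is a triangular wave in each coordinate and the $X^j$ have independent Brownian parts. On the event $\{\sup_{s\le t}|X_s| \le N\}$ only finitely many kinks occur, so a mollification/localization argument in each coordinate applies. The occupation-times formula shows that the second-order terms involve only the a.e.\ second derivatives, and the cross-variations $d[X^j,X^k]$ vanish for $j\ne k$. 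The kink of the $j$th coordinate at an even (resp.\ odd) integer contributes $+\partial_j f$ (resp.\ $-\partial_j f$) evaluated on $\{y^j = 0\}$ (resp.\ $\{y^j = 1\}$), integrated against the local time at $2n$ (resp.\ $2n+1$).

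With the PDE \eqref{eq:nonlinear_PDE_cube}, the martingale term becomes
\[
J_f\,\nu\, dW = \sigma(f(|\overline X|))\, dW, \qquad W = \int_0^\cdot \sign(\overline X_t)\,dB_t,
\]
where $W$ is a Brownian Motion by L\'evy's characterization. By item (ii), the drift term equals $b(Y)$.

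The local-time coefficients are $\sigma^{ij}(f(\cdot))/\nu^{jj}$ evaluated on the faces $\{y^j=0\}$ and $\{y^j=1\}$. By \eqref{eq:sigma_offdiagonal_boundary_condition_cube}, and since $f$ maps those faces into the corresponding faces by \eqref{eq:f_boundary_cube}, these coefficients vanish for $i\ne j$. This leaves exactly \eqref{eq:Phi_representation_cube}.

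Finally, I will check the reflection properties. The diagonal entries $\sigma^{ii}$ are positive by \eqref{eq:pd}, so each $\Phi^i$ increases only when $X^i \in 2\Z$, i.e.\ when $Y^i = 0$, and decreases only when $X^i \in 2\Z+1$, i.e.\ when $Y^i=1$. Each $\Phi^i$ has finite variation, being a difference of two nondecreasing processes. Therefore $\Phi$ is carried by $\{t: Y_t\in\partial D\}$ and satisfies \eqref{eq:cube_Phi}.
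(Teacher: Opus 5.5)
Your items (i) and (ii), and the matching of drift and dispersion in (iii), are exactly the paper's argument. Your formulas $J_f^{-1}=\nu\,\sigma^{-1}(f)$ and $\Delta_\nu f^i=(\sum_j(\sigma^{\cdot j}\cdot\nabla)\sigma^{\cdot j})^i(f)$ are right.

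The gap is the step where you discard the off-diagonal local-time terms. On $\{X^j\in 2\Z\}$ the coefficient of $dL^{2n}(X^j)$ in $dY^i$ is $\partial_j f^i(|\overline X|\odot({\bf 1}_d-e_j))=\sigma^{ij}(Y)/\nu^{jj}$, with $Y$ on the face $\{y^j=0\}$; the same holds on $\{y^j=1\}$ for odd integers. Killing this term for $i\neq j$ requires $\sigma^{ij}=0$ on the faces $\{y^j\in\{0,1\}\}$, which is a condition on the $j$th \emph{column} of $\sigma$. But \eqref{eq:sigma_offdiagonal_boundary_condition_cube} says $\sigma^{ij}=0$ on the faces $\{y^i\in\{0,1\}\}$, which is a condition on the $i$th \emph{row}. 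That row condition is what makes the flows $\eta^j$, $j\neq i$, tangent to the $i$th faces, i.e.\ what yields \eqref{eq:f_boundary_cube}. The two conditions are transposes of each other and coincide only in special cases such as symmetric $\sigma$. Knowing that $f$ maps faces to faces tells you where $\sigma^{ij}$ is evaluated, not that it vanishes there. The paper's proof makes the same assertion, so you have inherited its gap rather than introduced a new one.

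The gap is genuine, as the following example shows. Let $d=2$ and $0<c<1/10$. Take $g(w)=(w^1,\,w^2+cw^1w^2(1-w^2))$, a smooth diffeomorphism of $[0,1]^2$ preserving each face, and set $f=g^{-1}$ and $\sigma(w)=J_g(w)^{-1}$. Then $\sigma^{11}\equiv 1$, $\sigma^{12}\equiv 0$, $\sigma^{22}=1/(1+cw^1(1-2w^2))$ and $\sigma^{21}=-cw^2(1-w^2)\sigma^{22}$. This $\sigma$ meets every hypothesis:
\begin{itemize}
\item it is Lipschitz with positive definite symmetric part;
\item its columns are the push-forwards $f_*\partial_1, f_*\partial_2$, so \eqref{eq:Lie_bracket} holds;
\item \eqref{eq:sigma_offdiagonal_boundary_condition_cube} holds;
\item \eqref{eq:nus_cube} gives $\nu=I_2$, and $J_f=\sigma(f)$, so this $f$ is the function of item (i).
\end{itemize}
Yet $\partial_1 f^2(0,y^2)=\sigma^{21}(0,y^2)=-cy^2(1-y^2)$. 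Hence $Y^2$ acquires the term $-c\sum_{n\in\Z}\int_0^\cdot Y^2_t(1-Y^2_t)\,dL^{2n}_t(X^1)$, which charges times with $Y^1=0$ and $Y^2\in(0,1)$. So the reflection on $\{y^1=0\}$ is along $\sigma^{\cdot 1}(Y)$, not $e_1$, and both \eqref{eq:Phi_representation_cube} and \eqref{eq:cube_Phi} fail.

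To close the gap you must add the transposed hypothesis $\sigma^{ij}(y\odot({\bf 1}_d-e_j))=\sigma^{ij}(y\odot({\bf 1}_d-e_j)+e_j)=0$ for $i\neq j$; your argument then goes through verbatim. Without it, your computation proves only oblique reflection, with $\Phi=\sum_{j}(\nu^{jj})^{-1}\sum_{n\in\Z}\int_0^\cdot\sigma^{\cdot j}(Y_s)\,(dL^{2n}_s(X^j)-dL^{2n+1}_s(X^j))$.
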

\begin{proof}
Item \ref{item:f_cube} is simply a restatement of Lemma~\ref{lem:ODE_cube}. The expression for $\alpha$ in \eqref{eq:alpha_cube} ensures that the drift term in \eqref{eq:Ito_tanaka_cube} is equal to $b(f(|\overline x|))$. This follows from the fact that  $J_f^{-1}(y) = \nu \sigma^{-1}(y)$, courtesy of \eqref{eq:nonlinear_PDE_cube}, and that 
\[\Delta_\nu f^i = \sum_{j=1}^d (\nu^{jj})^2 \partial_{jj}f^i = \sum_{j=1}^d \nu^{jj}\partial_j \big(\sigma^{ij}(f)\big) = \sum_{j,\ell=1}^d \partial_\ell \sigma^{ij}(f)\sigma^{\ell j}(f) =     \bigg(\sum_{j=1}^d \big((\sigma^{\cdot j} \cdot \nabla)\sigma^{\cdot j}\big)^i\bigg)(f),\]
where we used \eqref{eq:nonlinear_PDE_cube} twice to replace derivatives of $f$ by terms involving $\nu$ and $\sigma$. By assumption, $\alpha$ is bounded, so \eqref{eq:X_SDE_nu} has a pathwise unique, strong solution for every initial condition $X_0 \in \R^d$ (as before, see \cite{veretennikov1981on}). The representation \eqref{eq:Ito_tanaka_cube} shows that the stochastic dynamics for $Y = f(|X-\Even(X)|)$ has $b(\cdot)$ and $\sigma(\cdot)$ as its drift and dispersion coefficients. 
    
    The expression for the reflection term \eqref{eq:Phi_representation_cube} follows from the fact that $J_f(y) = \sigma (f(y))\nu^{-1}$ and from the boundary condition \eqref{eq:sigma_offdiagonal_boundary_condition_cube}, which ensures the off-diagonal terms of $J_f$ vanish in the integral against the local time terms of \eqref{eq:Ito_tanaka_cube}. Note that, since $X^i_t = 2n \implies Y^i_t = 0$ and $X^i_t = 2n+1 \implies Y^i_t = 1$ for any $n \in \Z$, the process $\Phi$ is carried by the set $\{t\geq 0: Y_t \in \partial D\}$. Moreover,  the positivity of the diagonal entries of $\sigma$ ensures that the reflecting term $\Phi$ is inward-pointing; that is, \eqref{eq:cube_Phi} holds. This establishes that $Y = f(|X-\Even(X)|)$ solves \eqref{eq:RSDE} on $D$ with normal reflection, and completes the proof.    
\end{proof}

\section{Convex domains} \label{sec:convex_domain}
In this section we consider more general closed and bounded convex domains $D \subset\R^d$ for $d \ge 2$. Here, the situation is more delicate than in the previously considered domains, as the geometry of the domain and the direction of reflection play a critical role. Nevertheless, we will be able to obtain a folding representation \eqref{eq:goal}, even when the reflection is oblique.
Concretely, we impose the following assumption regarding the domain $D$.

\begin{assum} \label{ass:domain}
 The domain $D \subset \R^d$ for $d \ge 2$ is a closed and bounded convex set containing the origin in its interior and has a boundary of class $C^3_+$; that is, the boundary $\partial D$ is of class $C^3$ and has everywhere positive curvature.
\end{assum} 
Under Assumption~\ref{ass:domain}, the domain $D$ has the representation
\begin{equation} \label{eq:gauge}
D = \{y \in \R^d: \rho_D(y) \leq 1\}, \qquad \text{where } \rho_D(y) := \inf\{r > 0: y \in rD\}
\end{equation}
 is the gauge function associated with $D$. In particular, the boundary of the domain has the representation
\[\partial D = \{y \in \R^d: \rho_D(y) = 1\}.\]
The function $\rho_D$ belongs to the class $C^3(\R^d\setminus \{0\})$; it inherits this regularity from the $C^3$ boundary of $D$, 
and satisfies 
\[v^\top \nabla^2 \rho_D(y)\, v > 0 \qquad \text{ for every } y \neq 0 \text{ and every } v \in y^\perp \setminus \{0\},\] due to the positive curvature of the boundary (see  \cite[Lemma~1.7.13]{schneider1993convex} and \cite[Section~2.5]{schneider1993convex}; in particular, \cite[Corollary~2.5.2]{schneider1993convex}). 

On such a domain $D$ we will study the RSDE \eqref{eq:RSDE} with oblique reflection. That is, in addition to the coefficients $b: D \to \R^d$ and $\sigma:D \to \R^{d \times d}$, we take a third coefficient $\gamma:\partial D \to \R^d$, which satisfies the condition
\begin{equation} \label{eq:gamma_inward}
    \gamma(y)^\top \nabla \rho_D(y) < 0, \qquad \forall\ y \in \partial D.
\end{equation}
This is an inward-pointing condition on $\gamma$, since $-\nabla \rho_D(y)/\|\nabla \rho_D(y)\|$ is the inward-pointing unit normal vector at $y \in \partial D$. Processes $(Y,\Phi)$ solve the RSDE \eqref{eq:RSDE} on $D$ with oblique reflection if the relationship \eqref{eq:RSDE} is satisfied with a reflecting term $\Phi$ which is of finite variation, carried on the set $\{t \geq 0: Y_t \in \partial D\}$, and inward-pointing in the direction specified by $\gamma$; that is, the reflection term is required to satisfy
\begin{equation} 
\label{eq:Phi_oblique} 
\Phi = \int_0^\cdot \gamma(Y_t)\,d\TV{\Phi}_t,
\end{equation}
where $\TV{\cdot}_t$ denotes the total variation up to time $t$, as in \eqref{eq:TV}.

\subsection{Brownian Motion with normal reflection on the unit ball} \label{sec:unit_ball}
To illustrate our approach we consider first the case of Reflected Brownian Motion on the unit ball $D = \{y \in \R^d: \|y\| \leq 1\}$ with normal reflection. This corresponds to $b \equiv 0$, $\sigma \equiv I_d$ (the $d \times d$ identity matrix),  $\rho_D(y) = \|y\|$, and $\gamma(y) = - y$. That is, $Y = W + \Phi$
for a standard $d$-dimensional Brownian Motion $W$ and reflecting process $\Phi$ satisfying
\begin{equation} \label{eq:Phi_radial}
\Phi = -\int_0^\cdot Y_t\,d\TV{\Phi}_t.
\end{equation} 

The curvature of the boundary makes it difficult to establish a folding representation where the diffusion $X$ only exhibits additive noise, so we will introduce a form of multiplicative noise.
In the present setting, the key idea for establishing the folding representation is to derive first an autonomous scalar RSDE for the norm of $Y$. 

Indeed, it is well-documented that the norm of a standard $d$-dimensional Brownian Motion is a Bessel process of order $d$ ($\mathrm{BES}(d)$). Similar calculations applied to the Reflected Brownian Motion $Y$ on the unit ball show that its radial part $R = \rho_D(Y) = \|Y\|$ satisfies
\[
    dR_t = \frac{d-1}{2R_t}dt + d\widetilde W_t + d\widetilde \Phi_t,
\]
with the scalar Brownian Motion $\widetilde W = \int_0^\cdot \frac{1}{R_t}Y_t^\top dW_t$ and with the reflection term 
\[\widetilde \Phi_t = \int_0^t\frac{1}{R_s}Y_s^\top d\Phi_s = - \int_0^t R_sd\TV{\Phi}_s = -  \int_0^t d\TV{\Phi}_s = - \TV{\Phi}_t, \qquad t \geq 0.\]
Here we used \eqref{eq:Phi_radial} in the penultimate equality, and the fact that $\Phi$ is supported on the set $\{t\geq 0: R_t = 1\}$ to obtain the final equality. It follows that $R$ is a BES($d$) process reflected at $r = 1$. The conclusions of Theorem~\ref{thm:main_bounded_interval} suggest that we have the representation 
\begin{equation} \label{eq:ball_norm_folding}
    R_t = |X_t^0-\Even(X_t^0)|, \qquad 0 \le t < \infty
\end{equation} for the radial part $R = \|Y\|$ of Brownian Motion in $\R^d$ reflected on the unit ball, where the scalar process $X^0$ satisfies
\begin{equation} \label{eq:signed_Bessel}
\begin{aligned} dX_t^0 & = \sign\big(X_t^0 - \Even(X_t^0)\big)\frac{d-1}{2|X^0_t-\Even(X^0_t)|}dt + dB^0_t \\
& = \bigg(\frac{d-1}{2X^0_t}1_{\{X_t^0 \in (0,1)\}} - \frac{d-1}{2(2-X^0_t)}1_{\{X^0_t \in [1,2)\}}\bigg)dt + dB^0_t
\end{aligned}
\end{equation}
on $[0,\xi)$, with $B^0$ standard Brownian Motion and $\xi = \inf\{t \geq 0: X_t^0 \in \{0,2\}\} = \inf\{t \geq 0: R_t = 0\}$. Here,
without loss of generality, we considered an initial condition $X_0^0 \in (0,2)$. Loosely speaking, the dynamics of $X^0$ coincide with those of a $\mathrm{BES}(d)$ process while $X^0$ is in the interval $(0,1]$, whereas the dynamics of $2-X^0$ coincide with those of a $\mathrm{BES}(d)$ process while $X^0$ is in the interval $[1,2)$. From standard properties of the Bessel process we can deduce that $\xi = \infty$ almost surely (since $d \ge 2$), so that the dynamics \eqref{eq:signed_Bessel} hold globally. As such, Theorem~\ref{thm:main_bounded_interval}, together with Remark~\ref{rem:bounded_drift}, guarantees the validity of the representation \eqref{eq:ball_norm_folding}.

With these preparations at hand, we are able to construct now a folding representation for Reflected Brownian Motion on the unit ball. To this end, note that the process $X = Y/R$ takes values in the unit sphere $S^{d-1} = \{z \in \R^d: \|z\| = 1\}$ and has dynamics
\begin{align*} dX_t & = \frac{1}{R_t}dY_t - \frac{Y_t}{R_t^2}dR_t - \frac{1}{R_t^2}d[Y,R]_t + \frac{Y_t}{R_t^3}d[R]_t  \\
& = -\frac{d-1}{2R_t^2}X_t dt + \frac{1}{R_t}(I_d - X_tX_t^\top )dW_t, 
\end{align*}
where the reflection terms cancel since $R=1$ on the support of $\Phi$.
In particular, no contribution from the reflection term $\Phi$ remains. In fact, $X$ is a time-changed spherical Brownian Motion. 

To remove the degeneracy of the dispersion term, which will be useful in the less explicit analysis of Subsection~\ref{sec:convex_oblique}, we project the dispersion coefficient and introduce a $(d-1)$-dimensional driving Brownian Motion $\overline W = (\overline W^1,\dots,\overline W^{d-1})$. Concretely, we take a projection matrix $P:S^{d-1} \to \R^{(d-1)\times d}$, which satisfies
\[P(x)P(x)^\top = I_{d-1}, \qquad (I_d - xx^\top)P(x)^\top P(x) = I_d-xx^\top, \qquad \forall x \in S^{d-1}.\]
For instance, one can take $P(x)$ to be the first $d-1$ rows of the matrix $I_d - 2ww^\top$ with $w = \frac{x-e_d}{\|x-e_d\|}$ (for less explicit dispersion matrices than $I_d - xx^\top$, $P(x)$ can be computed using, for instance, $QR$ decomposition). Then we have that $X$ satisfies
\[dX_t = -\frac{d-1}{2R_t^2}X_tdt + \frac{1}{R_t}(I_d - X_tX_t^\top)P(X_t)^\top d\overline W_t, \qquad \text{with  } \overline W_t = \int_0^\cdot P(X_t)dW_t\]
a $(d-1)$-dimensional standard Brownian Motion. These observations lead us to a folding representation, which is the content of the next proposition.
\begin{prop}
\label{prop:RBM_ball}
    Consider the $(d+1)$-dimensional SDE system for $(X^0,X) = (X^0,X^1,\dots, X^d)$:
    \begin{equation}
        \label{eq:SDE_ball}
    \begin{aligned}
        dX^0_t & = \bigg(\frac{d-1}{2X_t^0}1_{\{X_t^0 \in (0,1)\}} - \frac{d-1}{2(2-X_t^0)}1_{\{X_t^0 \in [1,2)\}}\bigg)dt + dB^0_t, & t \geq 0, \\
        dX_t & = - \frac{d-1}{2(X_t^0-\Even(X_t^0))^2}X_t\, dt + \frac{1}{|X_t^0-\Even(X_t^0)|}(I_d - X_tX_t^\top)P(X_t)^\top dB_t, & t \geq 0, 
    \end{aligned}
    \end{equation}
where $(B^0,B) = (B^0,B^1,\dots,B^{d-1})$ is a standard $d$-dimensional Brownian Motion and we initiate the process at any $X_0^0 \in (0,2)$ and $(X_0^1,\dots,X_0^d) \in S^{d-1}$. Then the SDE \eqref{eq:SDE_ball} has a weak solution, which is unique in law; the process $X$ takes values in the unit sphere $S^{d-1}$; whereas the process
\begin{equation} \label{eq:Y_oblique_ball}
Y = |X^0-\Even(X^0)|X = \big(X^01_{\{X^0 \in (0,1)\}} + (2-X^0)1_{\{X^0 \in [1,2)\}}\big)X 
\end{equation} is a Reflected Brownian Motion on the unit ball with normal reflection. \smallskip

In particular, the process $Y$ of \eqref{eq:Y_oblique_ball} has initial value $Y_0 =(X^0_01_{\{X^0_0 \in (0,1)\}} + (2-X^0_0)1_{\{X^0_0 \in [1,2)\}})X_0$ and
satisfies the RSDE \eqref{eq:RSDE} with $b \equiv  0$, $\sigma \equiv I_d$, driving
standard Brownian Motion 
\[ W  = \int_0^\cdot \bigg(X_t \sign\big(X_t^0-\Even(X_t^0)\big)dB^0_t + (I_d - X_tX_t^\top)P(X_t)^\top dB_t\bigg),
\]
and reflection term $
\Phi   = -\int_0^\cdot X_t dL^1_t(X^0) = - \int_0^\cdot Y_t dL^1_t(X^0)$.
\end{prop}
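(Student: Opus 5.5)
The proof splits into three steps: solve for $X^0$, solve for $X$ given $X^0$, and then apply It\^o's formula to $Y = R X$ with $R := |X^0-\Even(X^0)|$.

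\emph{Step 1: the radial component.} The first equation of \eqref{eq:SDE_ball} is autonomous. We read it as the signed, reflected $\mathrm{BES}(d)$ process discussed before the statement. On $(0,1]$ it has the drift of $\mathrm{BES}(d)$. On $[1,2)$ the process $2-X^0$ has the drift of $\mathrm{BES}(d)$. Because the drift is bounded on compacts away from $\{0,2\}$, we get pathwise existence and uniqueness up to $\xi=\inf\{t: X^0_t\in\{0,2\}\}$. To show $\xi=\infty$, compare each excursion with a genuine $\mathrm{BES}(d)$ process with $d\ge 2$: for instance, use a Lyapunov function $\log x$ near $0$, or the fact that $R$ solves the reflected Bessel equation. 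The drift is locally bounded on $(0,2)$, so Remark~\ref{rem:bounded_drift} and the argument of Theorem~\ref{thm:main_bounded_interval} apply. This gives $R = |X^0-\Even(X^0)| \in (0,1]$ for all $t$, with
\[ dR_t = \tfrac{d-1}{2R_t}dt + \sign(X^0_t-\Even(X^0_t))\,dB^0_t - dL^1_t(X^0).\]
Here only the local time at $1$ contributes, since $X^0$ never reaches $0$ or $2$, and $f'(1)=1$.

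\emph{Step 2: the angular component.} Given the path of $X^0$, the second equation is a linear-in-time-change SDE with coefficients that are smooth in $X$ and locally bounded in time, because $R$ is continuous and strictly positive. Under the time change $\tau_t=\int_0^t R_s^{-2}ds$, it becomes the spherical Brownian Motion equation $dX = -\tfrac{d-1}{2}X\,d\tau + (I-XX^\top)P^\top d\widetilde B$. We check $\tau_t<\infty$ for each finite $t$ and $\tau_\infty=\infty$. Applying It\^o's formula to $\|X\|^2$ shows that the sphere is invariant: the drift gives $-(d-1)\|X\|^2R^{-2}$, and the trace term gives $R^{-2}\mathrm{Tr}((I-XX^\top)P^\top P(I-XX^\top)) = (d-1)R^{-2}$ on $S^{d-1}$. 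Strictly, we should first work with a version whose coefficients are globally Lipschitz extensions off the sphere and then restrict. Since $B$ is independent of $B^0$ and the equation for $X$ has locally Lipschitz coefficients given $X^0$, we obtain a strong solution for $(X^0,X)$ jointly. Uniqueness in law follows from pathwise uniqueness via Yamada--Watanabe.

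\emph{Step 3: It\^o's formula for $Y=RX$.} We have $dY = X\,dR + R\,dX + d[R,X]$, and $[R,X]=0$ because $B^0$ and $B$ are independent. The drift terms are $\tfrac{d-1}{2R}X - \tfrac{d-1}{2R}X = 0$. The martingale part is exactly $dW$ as stated. $W$ is a standard Brownian Motion by L\'evy's characterization, since
\[ d[W]_t = \big(XX^\top + (I-XX^\top)P^\top P(I-XX^\top)\big)dt = \big(XX^\top + I - XX^\top\big)dt = I_d\,dt,\]
using the projection identities for $P$. The remaining term is $\Phi = -\int X\,dL^1(X^0)$. Since $L^1(X^0)$ increases only when $R=1$, on its support $X=Y$ and $\|Y\|=1$. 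Hence $\TV{\Phi} = L^1(X^0)$, which gives \eqref{eq:Phi_radial} with $\gamma(y)=-y$, i.e.\ normal inward reflection carried on $\{Y_t\in\partial D\}$.

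The main obstacle is Step 1, the non-attainment of $\{0,2\}$, which is needed to justify global existence despite the singular drift. I would handle it by comparison with $\mathrm{BES}(d)$ on each excursion away from $1$, using the strong Markov property at the hitting times of $1$.
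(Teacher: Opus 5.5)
Your Steps 1 and 3 are sound. Step 3 is exactly what the paper's general It\^o computation reduces to in the radial case $\gamma(y)=-y/\|y\|$, $H(r,x)=rx$, $f(r,x)=r$, $\nu\equiv 1$, $\rho\equiv 0$.

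\textbf{The gap is in Step 2.} You treat the dispersion coefficient $\frac{1}{R_t}(I_d-X_tX_t^\top)P(X_t)^\top$ as smooth, hence locally Lipschitz, in $X$. From this you derive a strong solution, pathwise uniqueness and, via Yamada--Watanabe, uniqueness in law. No regularity of $P$ is assumed, and in general none is available. The two defining identities force $P(x)^\top P(x)=I_d-xx^\top$. So the rows of a continuous $P$ would form a global continuous orthonormal frame of the tangent bundle of $S^{d-1}$. By the hairy ball theorem this cannot exist for odd $d$, and in fact it exists only for $d\in\{2,4,8\}$. The Householder choice suggested in the paper is indeed discontinuous at $x=e_d$ once $d\ge 3$: its limit there depends on the direction of approach.

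This is not a mere technicality, because for a merely measurable $P$ pathwise uniqueness can genuinely fail. On $S^1$, write $x=(\cos\theta,\sin\theta)$ with $\theta\in(-\pi,\pi]$ and take $P(x)=s(x)(-\sin\theta,\cos\theta)$, where $s(x)$ is the sign of $\theta$. This turns the angular equation into a time-changed Tanaka-type equation $d\theta_t=R_t^{-1}s(X_t)\,dB_t$. So your Step 2 does not establish the uniqueness-in-law claim, and the strong existence you assert is not available in general.

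\textbf{How to repair it.} The statement only claims weak existence and uniqueness in law, and these depend on $P$ only through $P^\top P=I_d-xx^\top$. So Step 2 should be done at the level of laws; Step 1 is unaffected.

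For existence, let $\Theta$ be a spherical Brownian motion independent of $B^0$, given by the smooth-coefficient equation $d\Theta_s=-\frac{d-1}{2}\Theta_s\,ds+(I_d-\Theta_s\Theta_s^\top)\,d\beta_s$ with a $d$-dimensional Brownian motion $\beta$. Set $X_t=\Theta_{\tau_t}$, $\widetilde\beta_t=\beta_{\tau_t}$ and $B=\int_0^\cdot R_tP(X_t)\,d\widetilde\beta_t$. Then $(B^0,B)$ is a $d$-dimensional Brownian motion and $\frac{1}{R_t}P(X_t)^\top dB_t=(I_d-X_tX_t^\top)\,d\widetilde\beta_t$.

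For uniqueness, take any solution and add $\int_0^\cdot R_t^{-1}X_tX_t^\top\, d\beta'_t$ to the martingale part of $X$, where $\beta'$ is an independent $d$-dimensional Brownian motion. Time-change the result by $\tau^{-1}$ and apply Knight's theorem. Then $X_{\tau^{-1}(\cdot)}$ solves the same smooth-coefficient equation, driven by a Brownian motion independent of $B^0$, and this determines the joint law of $(X^0,X)$.

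\textbf{Comparison with the paper.} The paper gives no separate proof; it obtains the proposition as the radial special case of Theorem~\ref{thm:convex}. There, uniqueness in law comes from Krylov's weak-uniqueness result applied to the joint diffusion matrix. In this case that matrix is block-diagonal with blocks $1$ and $|x^0-\Even(x^0)|^{-2}(I_d-xx^\top)$, so it does not depend on $P$. Non-explosion comes from Lemma~\ref{lem:polar}. Your route instead uses that, for the ball, $X^0$ is autonomous and driven by noise independent of that of $X$. This gives non-explosion directly from Bessel non-attainment and makes the It\^o computation fully explicit. Once Step 2 is recast in terms of laws, it is a valid and more self-contained proof of this special case.
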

Proposition~\ref{prop:RBM_ball} will follow as a special case of Theorem~\ref{thm:convex} below, so we do not provide a separate proof for it.

\subsection{Oblique reflection on general convex domains} \label{sec:convex_oblique}
We return now to the setting of a general convex domain and invoke Assumption~\ref{ass:domain}.  
We start by stating our assumptions on the coefficients $(b,\sigma,\gamma)$, which specify the RSDE \eqref{eq:RSDE} and the direction of oblique reflection \eqref{eq:Phi_oblique}.

\begin{assum} \label{ass:coefficients_convex}
 We assume the following:
\begin{enumerate}[label = (\roman*), noitemsep]
    \item $b$ is Lipschitz continuous and $\sigma$ is of class $C^2$,
    \item  $a = \sigma \sigma^\top$ is uniformly elliptic, and 
    \item $\gamma$ is of class $C^2$, $\|\gamma(y)\| = 1$ for all $y \in \partial D$, and the inward-pointing condition \eqref{eq:gamma_inward} holds.
\end{enumerate}
\end{assum}
We start with a preparatory result, which will play an important role in the proof of Theorem~\ref{thm:convex} to come. The proof is postponed to Appendix~\ref{sec:polar_proof}.

\begin{lem} \label{lem:polar}
Let Assumptions~\ref{ass:domain} and \ref{ass:coefficients_convex} hold and suppose $Y$ is a solution to the RSDE \eqref{eq:RSDE} with oblique reflection \eqref{eq:Phi_oblique} on some stochastic time interval $[0,\xi)$. Then, as long as $Y_0 \ne 0$, we have
\[\P\big(Y_t = 0 \text{ for some } t \in [0,\xi)\big) = 0.\]
\end{lem}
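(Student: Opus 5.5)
The argument is local: near the origin $Y$ is an unreflected diffusion in $\R^d$, $d\ge2$, with uniformly elliptic diffusion matrix $a=\sigma\sigma^\top$. Such diffusions do not hit points, and I will show this with a Lyapunov-function argument applied to $\log$ of a suitable quadratic form. The origin lies in the interior of $D$, so fix $r_0>0$ with $\{\|y\|\le 2r_0\}\subset D^\circ$. Since $d\Phi$ is carried by $\{t: Y_t\in\partial D\}$, on any time interval during which $\|Y\|<2r_0$ the reflection term is inactive and $dY_t=b(Y_t)dt+\sigma(Y_t)dW_t$. Two reductions follow.

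\emph{Choice of Lyapunov function.} I freeze the coefficient at the origin: set $A:=a(0)$, which is positive definite, and $V(y):=\log\big(y^\top A^{-1}y\big)$ for $y\ne0$. With $q(y)=y^\top A^{-1}y$, we have $\nabla V=2A^{-1}y/q$ and $\nabla^2V=2A^{-1}/q-4A^{-1}yy^\top A^{-1}/q^2$. This gives
\[
\mathcal{L}V(y)=\frac{2b^\top A^{-1}y}{q}+\frac{\mathrm{Tr}(A^{-1}a(y))}{q}-\frac{2\,y^\top A^{-1}a(y)A^{-1}y}{q^2}.
\]
At $a(y)=A$ the last two terms combine to $(d-2)/q\ge0$. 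Since $\sigma\in C^2$, $a$ is Lipschitz near $0$, so $\|a(y)-A\|\le C\|y\|$. The error terms are therefore $O(\|y\|/q)=O(1/\|y\|)$, and the drift term is also $O(1/\|y\|)$ because $b$ is bounded near $0$. Hence $\mathcal{L}V(y)\ge -C/\|y\|$ on $0<\|y\|\le 2r_0$. This bound is not integrable in time in an obvious way, so I instead aim for a bound on $|\mathcal{L}V|$ that suffices for a localization argument. A cleaner route is a power function $V_p(y)=q(y)^{-p}$ with small $p>0$ when $d=2$, or $p=(d-2)/2$ in general. I commit to the following: for $d\ge3$, take $V=q^{-p}$ with $0<p<(d-2)/2$; for $d=2$, take $V=-\log q$ plus a correction. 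In both cases the goal is $\mathcal{L}V\le C V$ on a punctured ball, obtained by absorbing the $O(\|y\|)$ relative errors. Concretely, for $V=q^{-p}$, $\mathcal{L}V=q^{-p-1}\big(-2p\,b^\top A^{-1}y-p\,\mathrm{Tr}(A^{-1}a)+2p(p+1)y^\top A^{-1}aA^{-1}y/q\big)$, and at $a=A$ the bracket is $p(2p+2-d)<0$. For $r_0$ small, the perturbation by $O(\|y\|)$ keeps the bracket bounded above by $C\|y\|$, so $\mathcal{L}V\le C' q^{-p-1/2}\le C''(1+V)$ on the punctured ball, using $q^{-1/2}\le 1+q^{-p}$ when $p\ge1/2$. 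For $p<1/2$, or when $d=2$, I expect to need the extra factor coming from the drift. This is where I would be careful.

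\emph{Stopping argument.} Let $\tau_\varepsilon=\inf\{t:\|Y_t\|\le\varepsilon\}$ and $\theta=\inf\{t:\|Y_t\|\ge 2r_0\}$. Starting from $\|Y_s\|\le r_0$, Itô's formula and the inequality $\mathcal{L}V\le C(1+V)$ give, via Gronwall, $E[V(Y_{t\wedge\tau_\varepsilon\wedge\theta\wedge\xi})]\le (V(Y_s)+Ct)e^{Ct}$. Because $V(Y_{\tau_\varepsilon})\ge c\,\varepsilon^{-2p}$, we get $\P(\tau_\varepsilon<t\wedge\theta\wedge\xi)\to0$ as $\varepsilon\to0$. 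Hence $0$ is not hit before the exit from the $2r_0$-ball, within any finite time horizon.

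\emph{Patching excursions.} The successive times at which $\|Y\|$ drops to $r_0$ after having been at $2r_0$ are finite in number on compact time intervals, by path continuity. Applying the strong Markov property, or simply Itô's formula from each such stopping time (the argument above only uses the SDE and no Markov structure), and taking a countable union over $t\in\mathbb{N}$ yields $\P(Y_t=0\text{ for some }t<\xi)=0$. If $\|Y_0\|>r_0$, start from the first hitting time of $r_0$.

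The main obstacle is the case $d=2$, where the frozen generator annihilates $\log q$ exactly and the $O(1/\|y\|)$ errors are borderline. For this case I would use $V=-\log q+\lambda\|y\|$, or equivalently $\log\log(1/q)$-type corrections; the $C\|y\|\cdot q^{-1}$ errors are then controlled because $\mathcal{L}\|y\|\approx (d-1)/(2\|y\|)$ supplies a positive $1/\|y\|$ term that dominates them for large $\lambda$.
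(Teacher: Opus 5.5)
Your localization and patching are the same as in the paper: a small ball around the origin on which $d\Phi$ vanishes, then a countable union over excursions into it. The difference is the core step. The paper couples $Y$ on $[\theta_k,\tau_k)$ with the pathwise-unique strong solution of the unreflected SDE, and then cites non-attainability of points for uniformly elliptic diffusions with Lipschitz coefficients in $d\ge 2$ (Friedman, Ch.~11, Thm.~4.1). You instead try to prove that fact with a Lyapunov function applied to $Y$ directly. That route is legitimate and would even avoid the coupling. However, the Lyapunov estimates carry the whole argument, and as written they are wrong in both dimension regimes.

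For $d\ge 3$ and $V=q^{-p}$, the chain $\mathcal{L}V\le C'q^{-p-1/2}\le C''(1+V)$ fails. Since $q^{-p-1/2}/q^{-p}=q^{-1/2}\to\infty$ as $y\to0$, no bound of the form $C''(1+q^{-p})$ is possible, and $q^{-1/2}\le 1+q^{-p}$ does not give one. For $d=3$ you are also forced to take $p<1/2$, a case you leave open. The error is that you discarded the leading term. The bracket equals $p(2p+2-d)+O(\|y\|)$ with $p(2p+2-d)<0$. So for $r_0$ small it is at most $p(2p+2-d)/2<0$, and hence $\mathcal{L}V\le 0$ on the punctured ball. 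No Gronwall step is needed, and the drift plays no role: it only contributes $O(\|y\|)$ to the bracket.

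For $d=2$, which you rightly identify as the critical case, your correction $V=-\log q+\lambda\|y\|$ has the wrong sign. The stopping argument needs an \emph{upper} bound on $\mathcal{L}V$. You have $\mathcal{L}(-\log q)\le C_1/\|y\|$, with error terms of either sign. Since $\mathcal{L}\|y\|\ge \lambda_0/(2\|y\|)-\|b\|_\infty$, where $\lambda_0$ is the ellipticity constant of $a$, adding $+\lambda\|y\|$ adds a large positive $1/\|y\|$ term and makes the upper bound worse. The correct choice is $V=-\log q-\lambda\|y\|$ with $\lambda\ge 2C_1/\lambda_0$. This gives $\mathcal{L}V\le \lambda\|b\|_\infty$ near the origin, while $V$ stays bounded below on the ball and $V\to+\infty$ at $0$.

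Alternatively, set $u=\log(1/q)$ and take $V=\log u$. Then $\mathcal{L}V=\mathcal{L}u/u-\nabla u^\top a\nabla u/(2u^2)\le \frac{1}{u\|y\|}\big(C_1-\frac{c}{u\|y\|}\big)$ for some $c>0$, and this is $\le 0$ for small $\|y\|$ because $u\|y\|\to 0$. This is not ``equivalent'' to your additive correction; here the concavity of $\log$ supplies the negative term. With either repair, your stopping and patching steps go through.
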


In the analysis to come we will need to extend $\gamma$ to $\R^d \setminus \{0\}$. There are many ways this can be done, but to respect the geometry of the domain we choose the following extension
\begin{equation} \label{eq:gamma_extension}
\gamma(y) = \gamma\bigg(\frac{y}{\rho_D(y)}\bigg), \quad y \in \R^d \setminus \{0\}.
\end{equation}
Motivated by the case of Reflected Brownian Motion on the unit ball developed in Subsection~\ref{sec:unit_ball}, we look for a transformation of the reflected diffusion $Y$ satisfying \eqref{eq:RSDE} with oblique reflection \eqref{eq:Phi_oblique}, which consists of a scalar process $R$ with reflection, and of a multivariate process $X$ devoid of reflection terms.
To this end, we set 
\begin{equation} \label{eq:R,X}
R = \rho_D(Y), \qquad X = h(Y), 
\end{equation}
where $R$ will have a reflection term in its dynamics, while 
the map $h: D\setminus \{0\} \to \partial D$ will be chosen so that $X$ is an It\^o process with state space $\partial D$. To derive conditions on $h$ we compute for every $i=1,\dots,d$,
\begin{align}
    dX^i_t & = \nabla h^i(Y_t)^\top dY_t + \tfrac{1}{2}\mathrm{Tr}\big(\nabla^2 h^i(Y_t)d[Y]_t\big) \nonumber \\
    & = \Big(\nabla h^i(Y_t)^\top b(Y_t) + \tfrac{1}{2}\mathrm{Tr}\big(\nabla^2h^i(Y_t)a(Y_t)\big)\Big)dt + \nabla h^i(Y_t)^\top\sigma(Y_t)dW_t + \nabla h^i(Y_t)^\top d\Phi_t. \label{eq:X_convex_initial}
\end{align}
Using the oblique reflection condition \eqref{eq:Phi_oblique}, we see that the reflection term in \eqref{eq:X_convex_initial} vanishes if
\begin{equation} \label{eq:oblique_boundary} \nabla h^i(y)^\top \gamma(y) = 0 \qquad \text{holds for every } i=1,\dots,d \quad \text{and} \quad   y \in \partial D.
\end{equation}
Moreover, we require that $h(y) \in \partial D$ for every $y \in D\setminus \{0\}$. As such, to reconstruct $Y$ from $R$ and $X$ we will associate to any point $y \in \R^d \setminus \{0\}$ a curve $t\mapsto \eta_t(y)$ mapping $y$ to the origin. We write $T(y)$ for the hitting time of the origin when the curve $\eta$ is initiated at $y \in \R^d \setminus \{0\}$; that is, $\eta_{T(y)}(y) = 0$. Since, for any $y \in \partial D$, we will use $h$ to map the set 
\begin{equation} \label{eq:constancy_set}
\Gamma(y) := \big\{\eta_t(y): t \in \big[0,T(y)\big)\big\}
\end{equation} to $y$, this means that the gauge function $\rho_D$ of \eqref{eq:gauge} may be nonconstant on $\Gamma(y)$, but the functions $h^i$ for $i=1,\dots,d$ may not. This leads to the constancy condition 
\begin{equation} \label{eq:constancy_condition}
    0 = \frac{d}{dt} h^i\big(\eta_t(y)\big) = \nabla h^i\big(\eta_t(y)\big)^\top \dot \eta_t(y), \qquad y \in \R^d\setminus\{0\}, \quad t \in \big[0,T(y)\big). 
\end{equation}

The two conditions \eqref{eq:oblique_boundary} and \eqref{eq:constancy_condition} motivate the flow equation
\begin{equation} \label{eq:flow_oblique}
    \dot \eta_t(y) = \gamma\big(\eta_t(y)\big), \quad \eta_0(y) = y;  \qquad y \in \R^d\setminus\{0\}, \quad t \in \big[0,T(y)\big),
\end{equation}
which is posed on $\R^d \setminus \{0\}$ using the extension \eqref{eq:gamma_extension} for $\gamma$. We now establish the existence of a flow map $\eta$ satisfying \eqref{eq:flow_oblique} as well as some of its properties. The proof is lengthy, so we defer it to Appendix~\ref{sec:convex_proof}.
\begin{lem} \label{lem:flow_convex} Let Assumptions~\ref{ass:domain} and \ref{ass:coefficients_convex} hold. Then,
    \begin{enumerate}[label = (\roman*),noitemsep]
        \item \label{item:convex_flow} for every $y \in \R^d\setminus\{0\}$ the flow equation \eqref{eq:flow_oblique} has a unique solution $t \mapsto \eta_t(y)$ on a maximal time interval $[0,T(y))$. We have $T(y) < \infty$ and  $\eta_t(y) \to 0$ as $t \uparrow T(y)$ for all $y \in \R^d\setminus\{0\}$; 
        \item \label{item:bijectivity} every $y \in D^* := D\setminus \{0\}$ can be uniquely written as $y = \eta_t(z)$ for some $z \in \partial D$ and $t \in [0,T(z))$. That is, the map
        \[\eta:\mathcal{U} \to D^* \quad \text{given by} \quad (z,t) \mapsto \eta_t(z), \quad \text{where} \quad \mathcal{U} = \{(z,t): z \in \partial D,\,  0 \le t < T(z)\}\]
        is a bijection. As such, we can write $\eta^{-1}(y) = (h(y),\theta(y))$ for maps $h:D^* \to \partial D$ and $\theta:D^* \to [0,\infty)$;
        \item \label{item:flow_regularity} the flow map $(y,t) \mapsto \eta_t(y)$ and the inverse maps $h$ and $\theta$ are all of class $C^2$. 
    \end{enumerate}
\end{lem}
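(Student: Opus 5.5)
The plan is to study the flow through the gauge function. Along a solution of \eqref{eq:flow_oblique}, write $r_t = \rho_D(\eta_t(y))$ and use the $0$-homogeneity of the extension \eqref{eq:gamma_extension}.

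\textbf{Step 1: existence and finite hitting time.} The extension $\gamma$ is $C^2$ on $\R^d\setminus\{0\}$, being a composition of $C^2$ maps: $y\mapsto y/\rho_D(y)$ is $C^3$ away from the origin. Hence local existence and uniqueness on a maximal interval $[0,T(y))$ in $\R^d\setminus\{0\}$ follow from Picard--Lindel\"of. Since $\rho_D$ is positively $1$-homogeneous, $\nabla\rho_D$ is $0$-homogeneous, so
\[
\dot r_t=\nabla\rho_D(\eta_t)^\top\gamma(\eta_t)=\nabla\rho_D(z_t)^\top\gamma(z_t)=:-c(z_t),
\qquad z_t=\eta_t/\rho_D(\eta_t)\in\partial D .
\]
By \eqref{eq:gamma_inward}, continuity and compactness of $\partial D$, there are constants with $0<c_-\le c(\cdot)\le c_+$. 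Thus $r_t$ decreases at rate between $c_-$ and $c_+$. The solution cannot blow up, since $\|\dot\eta\|=1$ and $\rho_D$ decreases. So the only way the maximal interval can end is by approaching the origin, which gives $T(y)\le \rho_D(y)/c_-<\infty$ and $\eta_t(y)\to0$ because $\rho_D(\eta_t)\to0$.

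\textbf{Step 2: bijectivity.} For each $y\in D^*$, run the flow backward, i.e.\ solve $\dot\zeta=-\gamma(\zeta)$. Along this backward flow $\rho_D$ increases at rate at least $c_-$, and the solution exists globally because $\|\dot\zeta\|=1$. So it reaches $\partial D$ at a unique time $\theta(y)$: uniqueness holds because $\rho_D$ is strictly monotone along trajectories. Setting $h(y)=\zeta_{\theta(y)}$ gives $y=\eta_{\theta(y)}(h(y))$ with $\theta(y)<T(h(y))$. Uniqueness of the representation follows from uniqueness of ODE solutions together with strict monotonicity of $\rho_D$ along orbits: two representations $(z,t)$ and $(z',t')$ lie on the same orbit, and each of $z,z'$ is the unique point of that orbit on $\{\rho_D=1\}$.

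\textbf{Step 3: regularity.} This is the main obstacle. The flow $(y,t)\mapsto\eta_t(y)$ is $C^2$ on its open domain by the standard differentiable dependence on initial data for a $C^2$ vector field. For $\theta$ and $h$, apply the implicit function theorem to
\[
G(y,s)=\rho_D\bigl(\eta_{-s}(y)\bigr)-1=0,
\]
where $\eta_{-s}$ denotes the backward flow. Its $s$-derivative equals $c(\cdot)>0$, so $\theta$ is $C^2$, and then $h(y)=\eta_{-\theta(y)}(y)$ is $C^2$. A point needing care is that $\rho_D$ is only $C^3$, so $G$ is $C^2$, which suffices. One must also check the behaviour at points near $\partial D$ and at $t=0$, where the domain $\mathcal U$ has a boundary. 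I would handle this by working with the flow on the open set $\R^d\setminus\{0\}$, where $\gamma$ is defined, so that every map involved is defined on an open neighbourhood and the regularity near $\partial D$ is inherited automatically.
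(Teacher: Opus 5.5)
Your proposal is correct and follows essentially the same route as the paper. Both arguments use the uniform inward bound $\nabla\rho_D^\top\gamma\le -c_0$, obtained from zero-homogeneity and compactness of $\partial D$, to get $T(y)\le\rho_D(y)/c_0$ and $\eta_t(y)\to 0$; the backward flow $\dot\psi=-\gamma(\psi)$ with strict monotonicity of $\rho_D$ along orbits for the bijection; and the Implicit Function Theorem on $\rho_D(\psi_s(y))-1$ for the $C^2$ regularity of $\theta$ and $h$. Your remark about working on the open set $\R^d\setminus\{0\}$ to make sense of $C^2$ regularity up to $\partial D$ makes explicit a point the paper leaves implicit.
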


With $h$ as in Lemma~\ref{lem:flow_convex}\ref{item:bijectivity}, it is easy to see that $h(y) = z$ for any $y \in \Gamma(z)$, where $\Gamma(\cdot)$ is the set defined in \eqref{eq:constancy_set}, so that \eqref{eq:oblique_boundary} holds courtesy of \eqref{eq:constancy_condition} and \eqref{eq:flow_oblique}. \smallskip

We now turn our attention to reconstructing the reflected process $Y$ from the boundary process $X$ and the radial process $R$. From observing $X_t$, it is clear that $Y_t \in \Gamma(X_t)$, so it just remains to determine the precise point of this set at which $Y_t$ lies; this is  equivalent to determining a time $\tau$ so that $Y_t = \eta_\tau(X_t)$, where $\eta$ is the solution to \eqref{eq:flow_oblique} obtained in Lemma~\ref{lem:flow_convex}. As the gauge function measures how far a point lies between the boundary and the  origin, the value $R_t$ precisely pins down how far along the curve $s \mapsto \eta_s(X_t)$ the value $Y_t$ is. This observation yields the reconstruction formula
    \begin{equation} \label{eq:Y_reconstruct}
        Y_t = \eta_{\tau(R_t,X_t)}(X_t), \qquad \text{where } \tau(r,x)  \text{ is the unique } s \geq 0 \text{ such that } \rho_D\big(\eta_s(x)\big) = r.
    \end{equation}
The map $t \mapsto \rho_D(\eta_t(x))$ is strictly decreasing with rate uniformly bounded away from zero (see equation \eqref{eq:inward_pointing_estimate} obtained during the proof of Lemma~\ref{lem:flow_convex}). Additionally, since $\eta_t(x) \to 0$ as $t \uparrow T(x)$, we have that $\rho_D(\eta_t(x)) \downarrow 0$ as $t \uparrow T(x)$. Since $\rho_D(\eta_0(x)) = \rho_D(x) = 1$ for all $x \in \partial D$, we deduce from these observations that $\tau(r,x)$ is well-defined for all $(r,x) \in (0,1] \times \partial D$. 

We now seek to derive autonomous dynamics for $(X,R)$ and establish that $Y$, which satisfies \eqref{eq:RSDE} on $D$ with $\Phi$ given by \eqref{eq:Phi_oblique}, can  be reconstructed. We introduce the map 
\begin{equation} \label{eq:H} 
H(r,x) = \eta_{\tau(r,x)}(x), \qquad (r,x) \in (0,1] \times \partial D,
\end{equation}
in terms of which the reconstruction formula \eqref{eq:Y_reconstruct} becomes $Y_t = H(R_t,X_t)$. Whereas, 
using the function $h$ obtained in Lemma~\ref{lem:flow_convex}\ref{item:bijectivity}, we see that the dynamics for $X$ in \eqref{eq:X_convex_initial} become 
\[     dX_t = \alpha_X(R_t,X_t)dt + \sigma_X(R_t,X_t)dW_t,
\]
where $\alpha_X: (0,1] \times \partial D \to \R^d$ and $\sigma_X: (0,1] \times \partial D \to \R^{d \times d}$ are given by
\begin{align} \label{eq:alphaX}
    \alpha_X^i(r,x) & = \nabla h^i\big(H(r,x)\big)^\top b\big(H(r,x)\big) + \tfrac{1}{2}\mathrm{Tr}\Big(\nabla^2 h^i\big(H(r,x)\big)a\big(H(r,x)\big)\Big), & i=1,\dots,d, \\
   \sigma_X(r,x)  & = J_h\big(H(r,x)\big) \sigma\big(H(r,x)\big), \nonumber  
\end{align}
with $J_h(\cdot)$ the Jacobian of $h$. From \eqref{eq:constancy_condition}, we see that $J_h$ has rank $d-1$ since it degenerates along the direction of the flow $\eta$. As such, $\sigma_X$ also has rank $d-1$, and we can find a projection matrix $P:D^* \to \R^{(d-1)\times d}$ satisfying 
\begin{equation}
    \label{eq:projection}
    P(y) P(y)^\top  = I_{d-1} \quad \text{and} \quad\sigma_X(r,x)P(y)^\top P(y) = \sigma_X(r,x) 
\end{equation}
for all $(r,x) \in (0,1] \times \partial D$, and $y = H(r,x)$. Then the matrix
\begin{equation}
    \label{eq:sigmaX_bar} \sigma_X^P(r,x) = \sigma_X(r,x)P\big(H(r,x)\big)^\top
\end{equation} has rank $d-1$ for every $(r,x) \in (0,1] \times \partial D$. With this dimension reduction we can write
\begin{equation} 
dX_t = \alpha_X(R_t,X_t)dt +  \sigma_X^P(R_t,X_t)d\overline W_t,
\label{eq:X_convex_dynamics}
\end{equation}
where $\overline W = \int_0^\cdot P(Y_t)dW_t$ is a standard $(d-1)$-dimensional Brownian Motion. \smallskip

Next, we compute the dynamics of $R = \rho_D(Y)$ as in \eqref{eq:R,X}, using It\^o's formula:    \begin{align} dR_t & = \Big(\nabla \rho_D(Y_t)^\top b(Y_t) + \tfrac{1}{2}\mathrm{Tr}\big(\nabla^2 \rho_D(Y_t)a(Y_t)\big)\Big)dt + \nabla \rho_D(Y_t)^\top \sigma(Y_t)dW_t + \nabla \rho_D(Y_t)^\top d\Phi_t \nonumber  \\
& = \widetilde b(R_t,X_t)dt + \widetilde \sigma(R_t,X_t)d\widetilde W_t +d\widetilde \Phi_t. \label{eq:Rt_dynamics}
\end{align}
In the final equality, we recalled $Y_t = H(R_t,X_t)$ and introduced the drift and dispersion coefficients
\begin{align} \label{eq:widetilde_b}
\widetilde b(r,x) & = \nabla \rho_D\big(H(r,x)\big)^\top b\big(H(r,x)\big) + \tfrac{1}{2}\mathrm{Tr}\Big(\nabla^2 \rho_D\big(H(r,x)\big)a\big(H(r,x)\big)\Big),  \\ \label{eq:widetilde_sigma}
\widetilde \sigma(r,x) & = \sqrt{\nabla \rho_D\big(H(r,x)\big)^\top a\big(H(r,x)\big)\nabla \rho_D\big(H(r,x)\big)}
\end{align}
 for $(r,x) \in (0,1]  \times \partial D$, the
scalar Brownian Motion \begin{align*}
\widetilde W & = \int_0^\cdot \frac{\nabla \rho_D(Y_t)^\top \sigma(Y_t)}{\sqrt{\nabla \rho_D(Y_t)^\top a(Y_t)\nabla \rho_D(Y_t)}}dW_t ,
\intertext{and the scalar finite variation term} 
    \widetilde \Phi & = \int_0^\cdot \nabla \rho_D(Y_t)^\top \gamma(Y_t)d\TV{\Phi}_t.
\end{align*} 
Note that $\widetilde \Phi$ is supported on the set $\{t \geq 0: Y_t \in \partial D\} = \{t \geq 0: R_t = 1\}$ and inward-pointing, since $\nabla \rho_D(y)^\top \gamma(y) < 0$ for every $y \in \partial D$ by \eqref{eq:gamma_inward}. \smallskip

From \eqref{eq:Rt_dynamics}, we see that, loosely speaking, the process $R$ is a one-dimensional reflected diffusion on $(0,1]$ coupled with the unreflected diffusion $X$ of \eqref{eq:X_convex_dynamics}. As such, from Theorem~\ref{thm:interval_Z} with $(R,X)$ in place of $(Y,Z)$, we expect to be able to write $R = f(|X^0-\Even(X^0)|,X)$ using a function $f$ of the form \eqref{eq:nu_Z} for an appropriate function $\widehat \sigma$. This leads us to the main result of this section, which establishes a general folding representation for diffusions obliquely reflected on convex domains. The proof of this result is contained in Appendix~\ref{sec:main_proof}.
\begin{theorem} \label{thm:convex}
    Let Assumptions~\ref{ass:domain} and \ref{ass:coefficients_convex} hold. 
    \begin{enumerate}[label = (\roman*),noitemsep]
        \item \label{item:folding_regularity} The function $H:(0,1] \times \partial D \to D^*$ defined in \eqref{eq:H} with $\eta$ as in \eqref{eq:flow_oblique}, and the function $\tau:(0,1] \times\partial D \to [0,\infty)$ defined in \eqref{eq:Y_reconstruct},  are of class $C^2$. Moreover, the relationships
        \begin{equation}
    \label{eq:inverse_identities}
        \rho_D\big(H(r,x)\big) = r, \qquad h\big(H(r,x)\big) = x  
        \end{equation}
        hold for $(r,x) \in (0,1] \times \partial D$, where $h$ is given in Lemma~\ref{lem:flow_convex}\ref{item:bijectivity}; that is,  $H$ is invertible with inverse $H^{-1} = (\rho_D,h)$;
        \item \label{item:SDE_system} For $r \in (0,1]$,  $x^0 \in (0,2)$ and $x \in \partial D$ set 
        \begin{equation} \label{eq:SDE_system_coefficients}
        \begin{aligned}
        \rho(r,x) & = \frac{P(H(r,x))\sigma^\top(H(r,x))\nabla
        \rho_D(H(r,x))}{\sqrt{\nabla
        \rho_D(H(r,x))^\top a(H(r,x))\nabla
        \rho_D(H(r,x))}}, \qquad \widehat \sigma(r,x) = \widetilde \sigma(r,x)\sqrt{1-\|\rho(r,x)\|^2}, \\
        \nu(x) & = \bigg(\int_0^1 \frac{du}{\widehat \sigma(u,x)}\bigg)^{-1}, \qquad 
        f(r,x)  = \kappa^{-1}\bigg(\frac{r}{\nu(x)},x\bigg), \quad \text{where} \quad  \kappa(\xi,x) = \int_0^\xi \frac{du}{\widehat \sigma(u,x)}, \\
           \overline x^0 & = x^01_{\{x^0 \in (0,1)\}}  - (2-x^0)1_{\{x^0 \in [1,2)\}},\\
            \overline \alpha(x^0,x) & = \alpha_X\big(f(|\overline x^0|,x),x\big), \quad 
            \overline \sigma(x^0,x) = \sigma_X^P \big(f(|\overline x^0|,x),x\big), \quad \overline a(x^0,x) = \overline \sigma(x^0,x)\overline \sigma(x^0,x)^\top, \\
               \overline \sigma^0(x^0,x) & = \sign(\overline x^0)\bigg(\frac{\rho(f(|\overline x^0|,x),x)}{\sqrt{1-\|\rho(f(|\overline x^0|,x),x)\|^2}} -\frac{ \overline \sigma(x^0,x)^\top \nabla_xf(|\overline x^0|,x)}{\partial_r f(|\overline x^0|,x)}\bigg), \\
            \zeta(x^0,x) & = \frac{1}{\partial_r f(|\overline x^0|,x)}\Big(\widetilde b\big(f(|\overline x^0|,x),x\big) - \tfrac{1}{2}\nu(x)^2\partial_{rr} f(|\overline x^0|,x)\big(1+\|\overline \sigma^0(x^0,x)\|^2\big) \\ 
& \qquad \qquad \qquad \qquad - \nabla_x f(|\overline x^0|,x)^\top \overline \alpha( x^0,x)  - \tfrac{1}{2}\mathrm{Tr}\big(\nabla^2_x f(|\overline x^0|,x)\overline a(x^0,x)\big) \\
& \qquad \qquad \qquad \qquad- \partial_r \nabla_x f(|\overline x^0|,x)^\top \overline  \sigma(x^0,x)\overline \sigma^0(x^0,x)\Big),\\
            \overline \alpha^0(x^0,x) & = \sign(\overline x^0)\zeta(x^0,x),
        \end{aligned} 
        \end{equation}
        where $(\alpha_X,P, \sigma_X^P)$ and $(\widetilde b, \widetilde \sigma)$ are as in  \eqref{eq:alphaX}--\eqref{eq:sigmaX_bar} and \eqref{eq:widetilde_b}--\eqref{eq:widetilde_sigma} respectively and, as in Theorem~\ref{thm:interval_Z}, $\kappa^{-1}$ refers to the inverse of $\kappa$ in its first argument.
        
Consider the SDE system for $(X^0,X) = (X^0,X^1,\dots,X^d)$ given by
        \begin{equation}
        \label{eq:SDE_sytem_convex}
        \begin{aligned}
        dX^0_t & = \overline \alpha^0(X^0_t,X_t)dt + \nu(X_t)\big(dB^0_t + \overline \sigma^0(X^0_t,X_t)^\top dB_t\big),\\
        dX_t  & =   \overline \alpha(X^0_t,X_t)dt + \overline \sigma(X^0_t,X_t)dB_t,
        \end{aligned}
        \end{equation}
        where $(B^0,B) = (B^0,B^1,\dots,B^{d-1})$ is a standard $d$-dimensional Brownian Motion. This system has a weak solution, unique in law, for any initial condition $(X^0_0,X_0) \in (0,2) \times \partial D$; 
        \item \label{item:convex_folding} The process 
        \[Y = H(R,X) \qquad \text{with} \qquad  R = f(|\overline X^0|,X)\]
        satisfies the RSDE \eqref{eq:RSDE} with oblique reflection on $D$ in the manner of \eqref{eq:Phi_oblique}, initial condition $Y_0 = H(R_0,X_0)$, driving Brownian Motion
        \begin{equation} \label{eq:W_convex}
            \begin{aligned}
                 W & = \int_0^\cdot \sign(\overline X^0_t)\widehat\sigma(R_t,X_t)\frac{\sigma^{-1}(H(R_t,X_t))\gamma(H(R_t,X_t))}{\nabla \rho_D(H(R_t,X_t))^\top \gamma(H(R_t,X_t))}dB^0_t +\int_0^\cdot P\big(H(R_t,X_t)\big)^\top dB_t,
            \end{aligned}
        \end{equation} and reflection term 
        \begin{equation} \label{eq:Phi_convex} 
        \Phi = -\int_0^\cdot \frac{\widehat \sigma(1,X_t)\gamma(H(1,X_t))}{\nu(X_t)\nabla \rho_D(H(1,X_t))^\top\gamma(H(1,X_t))} dL_t^1(X^0).
    \end{equation}    \end{enumerate}
\end{theorem}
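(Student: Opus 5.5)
The proof has three parts: regularity of $H$, then well-posedness of the $(X^0,X)$ system, then Itô calculus for $Y=H(R,X)$.

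\emph{Part (i).} We obtain $\tau$ from the implicit function theorem applied to $G(s,r,x)=\rho_D(\eta_s(x))-r$ on $[0,T(x))\times(0,1]\times\partial D$. By Lemma~\ref{lem:flow_convex}\ref{item:flow_regularity}, $G$ is $C^2$ (recall $\rho_D\in C^3$ away from the origin). Moreover $\partial_s G=\nabla\rho_D(\eta_s(x))^\top\gamma(\eta_s(x))$. By the extension \eqref{eq:gamma_extension} and the $1$-homogeneity of $\rho_D$, this equals $\nabla\rho_D(z)^\top\gamma(z)$ at $z=\eta_s(x)/\rho_D(\eta_s(x))\in\partial D$, which is negative by \eqref{eq:gamma_inward} and bounded away from zero by compactness. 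Hence $\tau$ is well defined and $C^2$, and so is $H(r,x)=\eta_{\tau(r,x)}(x)$. The identity $\rho_D(H)=r$ is the definition of $\tau$. The identity $h(H(r,x))=x$ follows from the bijectivity in Lemma~\ref{lem:flow_convex}\ref{item:bijectivity}, since $\eta^{-1}(\eta_\tau(x))=(x,\tau)$. Conversely, every $y\in D^*$ equals $H(\rho_D(y),h(y))$, so $H^{-1}=(\rho_D,h)$.

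\emph{Part (ii).} The plan is to recognise \eqref{eq:SDE_sytem_convex} as exactly the system of Theorem~\ref{thm:interval_Z}, with $(R,X)$ in place of $(Y,Z)$. The correspondences are:
\begin{itemize}[noitemsep]
\item $E=\partial D$, $b=\widetilde b$ and $\sigma=\widetilde\sigma$;
\item $b_Z=\alpha_X$ and $\sigma_Z=\sigma_X^P$;
\item the correlation $\rho$ as in \eqref{eq:SDE_system_coefficients}, which is the correlation $d[\widetilde W,\overline W]_t/dt$ between $\widetilde W$ and $\overline W=\int P(Y)\,dW$.
\end{itemize}
I then check Assumption~\ref{ass:coefficients_convex}$\Rightarrow$Assumption~\ref{ass:coefficients_Z}, and this is the main obstacle. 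The difficulty is at $r\downarrow0$: the coefficients $\alpha_X$, $\sigma_X^P$ and $\widetilde b$ blow up there, like $1/R$ in the ball case. The boundedness and Lipschitz requirements therefore fail globally.

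I would handle this by localization. Work on the stopping intervals up to the first time $|\overline X^0|\le\varepsilon$ (equivalently $R\le$ some small level). On each region $\{r\ge\varepsilon\}$ the coefficients are $C^1$ on a compact set, so the Krylov-type argument of Theorem~\ref{thm:halfline_Z}\ref{item:SDE_halfline_Z} gives weak existence and uniqueness in law; the discontinuity of $\operatorname{sign}(\overline x^0)$ across $x^0=1$ is again codimension one. Several facts enter here:
\begin{itemize}[noitemsep]
\item Uniform ellipticity of $a$ gives $\widetilde\sigma\ge c>0$.
\item $\|\rho\|<1$ uniformly, because $P\sigma^\top\nabla\rho_D$ omits the component of $\sigma^\top\nabla\rho_D$ along the null direction of $J_h\sigma$, and that component is nonzero. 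This is where the oblique flow enters, through $J_h\gamma=0$ and $\gamma^\top\nabla\rho_D\neq0$.
\item $\widehat\sigma$ is $C^2$ in $x$, as Assumption~\ref{ass:coefficients_Z} requires, since $\sigma\in C^2$ and $H\in C^2$. This is possibly marginal and may need the $C^3$ boundary.
\end{itemize}
Then let $\varepsilon\downarrow0$. Part (iii) will show that $Y=H(R,X)$ solves the RSDE up to these times, so Lemma~\ref{lem:polar} gives that $R$ never reaches $0$, i.e.\ $X^0$ never hits $\{0,2\}$. The solution is therefore global, and uniqueness in law follows by patching the localized laws.

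\emph{Part (iii).} Theorem~\ref{thm:interval_Z}(iii) shows that $(R,X)$ satisfies \eqref{eq:Rt_dynamics} and \eqref{eq:X_convex_dynamics} with a scalar Brownian motion $\widetilde W$ correlated with $B$ via $\rho$, and reflection $\widetilde\Phi=-\nu^{-1}\widehat\sigma(1,X)\,dL^1(X^0)$. Since only $L^1$ matters on $(0,2)$, this $\widetilde\Phi$ is carried by $\{R=1\}$. Apply Itô's formula to $Y=H(R,X)$; this is legitimate since $H\in C^2$ away from $r=0$. The second-order and drift terms reassemble to $b(Y)$ and the martingale part to $\sigma(Y)\,dW$. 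This holds because differentiating $(\rho_D,h)\circ H=\mathrm{id}$ shows $DH$ inverts $(\nabla\rho_D^\top,J_h)$, and the coefficients of $(R,X)$ were obtained from $Y$ by exactly this change of variables. So it suffices to find $W$ with $\nabla\rho_D^\top\sigma\,dW=\widetilde\sigma\,d\widetilde W$ and $P\,dW=dB$; solving this linear system gives \eqref{eq:W_convex}, whose first column is proportional to $\sigma^{-1}\gamma$ because $J_h\gamma=0$. Finally, the reflection contribution is $\partial_rH(1,X)\,d\widetilde\Phi$. Differentiating $\rho_D(\eta_{\tau}(x))=r$ gives $\partial_rH=\gamma/(\nabla\rho_D^\top\gamma)$, which yields \eqref{eq:Phi_convex}. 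This reflection term points along $\gamma(Y)$ and is carried by $\{Y\in\partial D\}$, as \eqref{eq:Phi_oblique} requires.
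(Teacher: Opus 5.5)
Your proposal is correct and follows the paper's proof essentially step by step. For (i) you use the implicit function theorem for $\tau$ and the bijectivity of the flow; for (ii) you localize away from $\{X^0\in\{0,2\}\}$, apply Krylov's result, and get non-explosion from Lemma~\ref{lem:polar}; for (iii) you use Theorem~\ref{thm:interval_Z} for $R$, then It\^o's formula for $H(R,X)$ with $\partial_r H=\gamma/(\nabla\rho_D^\top\gamma)$. Your change-of-variables argument ($DH$ inverts $(\nabla\rho_D^\top,J_h)$, so It\^o's formula through $H\circ(\rho_D,h)=\mathrm{id}$ returns $b$) justifies the drift identity that the paper omits, and the one check to add is that $W$ in \eqref{eq:W_convex} satisfies $[W]_t=tI_d$, which holds because the matrix with rows $\nabla\rho_D^\top\sigma$ and $P$ is invertible, maps $W$ to $(\widetilde\sigma\widetilde W,B)$, and the covariance of the latter equals that matrix times its transpose.
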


 In general, the flow equation \eqref{eq:flow_oblique} does not admit an explicit solution and, consequently, the functions $H$, $h$ and $\tau$ do not either. The exception, which works on a generic convex domain $D$ satisfying Assumption~\ref{ass:domain}, is the case of inward radial reflection. Concretely, suppose that $\gamma(y) = -y/\|y\|$ for all $y \in \partial D$ and note that this choice
        satisfies \eqref{eq:gamma_inward} by the Euler identity $y^\top \nabla \rho_D(y) = \rho_D(y)$. Then the flow equation \eqref{eq:flow_oblique} has the explicit solution
        \[\eta_t(y) = \frac{\|y\|-t}{\|y\|}y, \qquad 0 \leq t < \|y\| = T(y).\]
        The homogeneity of $\rho_D$ shows that for any $(r,x) \in (0,1] \times \partial D$ we have $\rho_D(\eta_t(x)) = r \iff t = \|x\|(1- r)$. As such, it follows that $\tau(r,x) = \|x\|(1-r)$ and, consequently, that $H(r,x) = rx$. In the case of the unit ball, this  coincides precisely with the discussion in Subsection~\ref{sec:unit_ball}; in particular, Theorem~\ref{thm:convex} reduces then to Proposition~\ref{prop:RBM_ball}.

    \subsubsection*{Funding}

    Ioannis Karatzas gratefully acknowledges support from the National Science Foundation under Grant DMS-25-06199, and from a Lenfest Award at Columbia University.
\appendix 
\section{Proofs of Lemmas~\ref{lem:ODE_quadrant}, \ref{lem:polar} and \ref{lem:flow_convex}}

\subsection{Proof of Lemma~\ref{lem:ODE_quadrant}} \label{sec:orthant_proof}
    With the exception of $f$ being a member of $W^{2,\infty}_{\mathrm{loc}}(D)$ and having range $D$, which leads to the necessity of \eqref{eq:sigma_offdiagonal_boundary_condition}, the remaining claims have already been established as part of the discussion in Subsection~\ref{sec:orthant}. Note that sufficiency of \eqref{eq:sigma_offdiagonal_boundary_condition} is immediate, since on $\{y^i = 0\}$ it gives $\partial_j f^i = \sigma^{ij}(f) = 0$ for $j \neq i$; this way, $f^i$ stays at its initial value $f^i(0) = 0$, establishing \eqref{eq:f_boundary_quadrant}. 
    
    The claim $f \in W^{2,\infty}_{\mathrm{loc}}(D)$ follows directly from \eqref{eq:nonlinear_PDE_quadrant} and the global Lipschitz continuity of $\sigma$. To establish surjectivity of $f$, we define for any $y \in D$ the line-segment $L_y = \{sy: 0 \leq s \leq 1\}$ connecting $y$ to the origin.
    Next, we introduce the set
    \[E := \{y \in D: f \text{ has a $C^1$ inverse on a neighborhood $U_y$ of $L_y$ in $D$}\}. \]
    Since $J_f(y) = \sigma(f(y))$ is invertible by our assumption on $\sigma$, we have from the Inverse Function Theorem that $f$ is a local $C^1$ diffeomorphism. From here, it follows that the set $E$ is open in $D$. Indeed, the line segments $L_{y'}$ vary continuously with their endpoint, so for $y'$ in a small neighborhood of $y$ we have $L_{y'} \subset U_y$, and the $C^1$ inverse on $U_y$ shows that $y' \in E$. Additionally, since $f(0) = 0$ and $f$ is a local diffeomorphism at $0$, we  have that $0 \in E$, so that $E$ is a nonempty set. Hence, if we can show that $E$ is also closed in $D$, we will be able to conclude that $E=D$, which will establish the claim.

    To this end, let $\{y_n\}_{n \in \mathbb{N}} \subset E$ be a sequence that converges to some $y_\infty \in D$. Since each $y_n$ is in $E$, the function $h_n(s) = f^{-1}(sy_n)$ is well-defined, satisfies $h_n(0) = 0$, and is of class $C^1([0,1])$. Differentiating $sy_n = f(h_n(s))$ yields by the chain rule and \eqref{eq:nonlinear_PDE_quadrant}, 
    \[y_n = \frac{d}{ds} f\big(h_n(s)\big) =  J_f\big(h_n(s)\big)\frac{d}{ds}h_n(s) = \sigma(sy_n)\frac{d}{ds}h_n(s) \qquad \implies \frac{d}{ds}h_n(s) = \sigma^{-1}(sy_n)y_n.\] 
    Since $\sigma^{-1}$ is continuous on the compact set  $\overline L = \overline{(\cup_n L_{y_n}) \cup L_{y_\infty}}$, the operator norm of $\sigma^{-1}$ is bounded by some constant $M > 0$ on $\overline L$. By the Dominated Convergence Theorem we conclude that 
    \[\lim_{n \to \infty} h_n(s) = \lim_{n \to \infty} \int_0^s \sigma^{-1}(uy_n)y_ndu = \int_0^s \sigma^{-1}(uy_\infty)y_\infty du =: h_\infty(s)\] 
    for every $s \in [0,1]$. By continuity of $f$, we see that $f(h_\infty(s)) = \lim_{n \to \infty} f(h_n(s)) = sy_\infty$.
    
    We are now ready to show that $y_\infty \in E$. For each $s \in [0,1]$, from the above analysis we have that $h_\infty(s)$ is a preimage of $sy_\infty$ and that $f$ is a local diffeomorphism at $h_\infty(s)$. As such, there exist open neighborhoods $U_s, V_s \subset D$ with $s y_\infty \in U_s$, $h_\infty(s) \in V_s$, which we may take to be connected, such that $f|_{V_s}: V_s \to U_s$ is a $C^1$ diffeomorphism. That is, there exists a local inverse $\phi_s: = (f|_{V_s})^{-1}$, which satisfies $\phi_s(sy_\infty) = h_\infty(s)$. The collection of sets $\{U_s\}_{s \in [0,1]}$ is an open cover of the compact set $L_{y_\infty}$, so we can extract a finite subcover $U_{y_\infty} := \cup_{n=1}^N U_{s_n}$. Whenever $s y_\infty \in U_{s_n} \cap U_{s_m}$, both $\phi_{s_n}(s y_\infty)$ and $\phi_{s_m}(s y_\infty)$ are preimages of $s y_\infty$ under $f$ lying on $L_{y_\infty}$, and each equals $h_\infty(s)$; thus $\phi_{s_n}$ and $\phi_{s_m}$ agree along $L_{y_\infty} \cap U_{s_n} \cap U_{s_m}$. Since $f$ is a local diffeomorphism, its local inverse is unique near any point of this set, so $\phi_{s_n}$ and $\phi_{s_m}$ agree on a neighborhood of $L_{y_\infty} \cap U_{s_n} \cap U_{s_m}$. Shrinking the $U_{s_n}$ to a sufficiently small neighborhood of $L_{y_\infty}$ if necessary, we obtain that the $\phi_{s_n}$ agree on the overlaps $U_{s_n} \cap U_{s_m}$. It follows that $f^{-1}:U_{y_\infty} \to D$ given by $f^{-1}(y) = \phi_{s_n}(y)$ if $y \in U_{s_n}$ is a well-defined $C^1$ inverse of $f$. This establishes that $y_\infty \in E$. Hence $E$ is closed, so $E = D$, and the local inverses patch to a global $C^1$ inverse of $f$ on $D$; in particular, $f$ is a bijection onto $D$. This completes the proof.

\subsection{Proof of Lemma~\ref{lem:polar}} \label{sec:polar_proof}
    Let $r> 0$ be small enough so that the ball centered at the origin with radius $2r$ is compactly contained in the interior of $D$. We inductively define a sequence of stopping times by setting $\theta_0 = \inf\{t \ge 0: \|Y_t\| \leq r\}$ and 
    \[\tau_k = \inf\{t \ge \theta_k: \|Y_t\| \ge 2r\}, \quad \theta_{k+1} = \inf\{t \geq \tau_k: \|Y_t\| \le r\} \qquad \text{for } k=0,1,\dots\]
Clearly, we have that
\begin{align*}
    \P\big(Y_t = 0 \text{ for some } t \in [0,\xi)\big) & = \P\big({\displaystyle \cup_k} 
    \{Y_t = 0 \text{ for some } t \in [\theta_k,\tau_k)\}\big) \\
    & \le \sum_{k=0}^\infty \P\big(Y_t =0 \text{ for some } t \in [\theta_k,\tau_k)\big).
\end{align*}
Hence, it suffices to show that $\P(Y_t = 0 \text{ for some } t \in [\theta_k,\tau_k)) = 0$ holds for arbitrary $k$. Since $\Phi$ only accumulates on the set $\{t \ge 0: Y_t \in \partial D\}$, we see that $\Phi$ is constant on $[\theta_k,\tau_k)$ for any $k$. As such, on this time interval, $Y$ satisfies the standard SDE
\[dY_t = b(Y_t)dt + \sigma(Y_t)dW_t, \qquad t \in [\theta_k,\tau_k).\]
We now let $X$ be the solution of the SDE $dX_t = b(X_t)dt + \sigma(X_t)dW_t$ on the same probability space, started at $X_{\theta_k} = Y_{\theta_k}$ and driven by the same Brownian Motion $W$. Since the coefficients $b$ and $\sigma$ are Lipschitz continuous and bounded we have a pathwise unique, strong solution to this SDE. As such, we deduce that $X_t = Y_t$ for all $t \in [\theta_k,\tau_k)$. However, since $d \ge 2$, $b$ and $\sigma$ are Lipschitz, and $a = \sigma \sigma^\top$ is uniformly elliptic it is known that $X$ does not hit points; in particular, it does not hit the origin (see \cite[Chapter~11, Theorem~4.1]{friedman1975stochastic}). As such, we have that 
\[\P\big(Y_t = 0 \text{ for some } t \in [\theta_k,\tau_k)\big) = \P\big(X_t = 0 \text{ for some } t \in [\theta_k,\tau_k)\big) = 0,\]
which completes the proof.

\subsection{Proof of Lemma~\ref{lem:flow_convex}} \label{sec:convex_proof}

    We start by noting that the derivative of the gauge function $\nabla \rho_D$ is homogeneous of order zero and the same is true of $\gamma$ due to the chosen extension \eqref{eq:gamma_extension}. Hence, the map $y \mapsto \gamma(y)^\top \nabla \rho_D(y)$ on $\R^d \setminus \{0\}$ can be viewed as a map on $\partial D$ in terms of achievable values. Since $\partial D$ is compact this continuous map has a minimal value, and from \eqref{eq:gamma_inward} it follows that there exists a constant $c_0 >0$ such that 
    \begin{equation} \label{eq:gamma_uniform_estimate}
        \gamma(y)^\top \nabla \rho_D(y) \le -c_0 < 0, \qquad \forall y \in \R^d \setminus \{0\}.
    \end{equation}
    Now, because $\gamma$ is $C^2$ on $\R^d \setminus \{0\}$, it is locally Lipschitz continuous, which guarantees that \eqref{eq:flow_oblique} has a unique solution $\eta_t(y)$ on a maximal interval $[0,T(y))$. Since $\gamma$ is bounded, finite-time blowup is not possible, so the solution may exit the domain only if $\eta_t(y) \to 0$ as $t \uparrow T(y)$. 
    
    Fix $y \in \R^d \setminus \{0\}$ and define $r(t) = \rho_D(\eta_t(y))$ for $t \in [0,T(y))$. Note that $r(0) = \rho_D(y)$ and by the chain rule, \eqref{eq:flow_oblique}, and the estimate \eqref{eq:gamma_uniform_estimate} we have that 
    \begin{equation} 
\label{eq:inward_pointing_estimate}
    \dot r(t) = \nabla \rho_D\big(\eta_t(y)\big)^\top \dot \eta_t(y) = \nabla\rho_D\big(\eta_t(y)\big)^\top \gamma\big(\eta_t(y)\big) \le -c_0 < 0.
    \end{equation}
    As such, $r$ is strictly decreasing, which tells us that $y \in D^* \implies \eta_t(y) \in D^*$ for all $t \in [0,T(y))$.
    Moreover, the estimate \eqref{eq:inward_pointing_estimate} implies that $r(t) \le \rho_D(y) - c_0 t$, from which we deduce that $T(y) \le \rho_D(y)/c_0 < \infty$.  Since $\rho_D(y) = 0 \iff y = 0$,  we see that $\lim_{t \uparrow T(y)} r(t) = 0$ and, as such, $\lim_{t \uparrow T(y)} \eta_t(y) = 0$. This proves item \ref{item:convex_flow}.

    To establish \ref{item:bijectivity}, first consider the related flow equation  
    \[\dot \psi_t(y) = - \gamma\big(\psi_t(y)\big), \quad \psi_0(y) = y; \qquad y \in \R^d\setminus\{0\}, \quad t \in [0,\infty).\] For the same reasons as for \eqref{eq:flow_oblique}, this equation has a unique local flow map $\psi$. Defining $R(t) = \rho_D(\psi_t(y))$ for fixed $y \in \R^d \setminus \{0\}$, we readily obtain that $\dot R(t) \geq c_0 > 0$, so $R$ is strictly increasing and that $R(t) \ge \rho_D(y) + c_0t$. Since $\gamma$ is bounded, finite-time blowup is not possible, establishing that the map $t \mapsto \psi_t(y)$ is well-defined on $[0,\infty)$ for all $y \in \R^d \setminus \{0\}$. From the strictly increasing property of $R$ and the fact that $R(t) \to \infty$ as $t \to \infty$, we see that for every $y \in D^*$ there exists a unique $\theta(y) \in [0,\infty)$ such that $h(y) := \psi_{\theta(y)}(y) \in \partial D$. 
    
    We now establish that \begin{equation} \label{eq:flow_relationship} \eta_t\big(\psi_s(y)\big)  = \psi_{s-t}(y), \qquad \text{for all } 0 \leq t \leq s. 
    \end{equation} To see this, fix $s > 0$ and set $u_t(y) = \psi_{s-t}(y)$ for $t \in [0,s]$. Then we have that $u_0(y) = \psi_s(y)$ and 
    \[\dot u_t(y) = -\dot \psi_{s-t}(y) = \gamma\big(\psi_{s-t}(y)\big) = \gamma\big(u_t(y)\big).\]
    As such, $u$ solves the flow equation \eqref{eq:flow_oblique} initiated at $\psi_s(y)$ and, by uniqueness, must be equal to $\eta_t(\psi_s(y))$. In particular, given $y \in D^*$ we have that $(\psi_{\theta(y)}(y),\theta(y)) \in \mathcal{U}$. Hence, by taking $s = t = \theta(y)$ in \eqref{eq:flow_relationship} we have that $\eta_{\theta(y)}(\psi_{\theta(y)}(y)) = y$, which establishes surjectivity of $\eta$. 
    
    To obtain injectivity, assume that $\eta_{t_1}(z_1) = y = \eta_{t_2}(z_2)$ for some $y \in D^*$, $z_1,z_2\in \partial D$ and $t_i\in [0,T(z_i))$ for $i=1,2$. Set $v_t(y) = \eta_{t_1-t}(z_1)$ for $t \in [0,t_1]$. It is easy to see that $v_0(y) = y$ and
    $\dot v_t(y) = -\gamma(v_t(y))$,
    so that $v_t(y) = \psi_t(y)$ by uniqueness of solutions to the flow equation. However, we also clearly have $v_{t_1}(y) = z_1 \in \partial D$. From the strict increase of the function $R(t) = \rho_D(\psi_t(y))$ we know that, for any $y \in D^*$, $\psi_\cdot(y)$ hits $\partial D$ at the unique time $\theta(y)$. As such, it follows that $t_1 = \theta(y)$ and $z_1 = v_{t_1}(y) =  h(y)$. Arguing identically with $\eta_{t_2}(z_2)$ establishes that $t_2 = \theta(y)$ and $z_2 = h(y)$, whence $t_1 = t_2$ and $z_1 = z_2$. This completes the proof of item \ref{item:bijectivity}.

    Finally, to establish \ref{item:flow_regularity}, first note that the flow maps $(t,y)\mapsto \eta_t(y)$ and $(t,y) \mapsto \psi_t(y)$ are of class $C^2$ because $\gamma$ is of class $C^2$ (see, e.g., \cite[Theorem~5.4.1]{hartman1982ordinary}). To obtain smoothness of $\theta$, define the $C^2$ function $F(s,y) = \rho_D(\psi_s(y)) -1$ for $y \in D^*$ and $s \in [0,\infty)$. Note that for any $y \in D^*$,  $F(\theta(y),y) = \rho_D(h(y)) -1 = 0$ and that $\partial_s F(s,y) = \dot R(s) \geq c_0 > 0$. As such, the Implicit Function Theorem establishes that $\theta(y)$ is $C^2$. The function $h(y) = \psi_{\theta(y)}(y)$ is then also $C^2$ as a composition of $C^2$ maps. This completes the proof. 

\section{Proof of Theorem~\ref{thm:convex}} \label{sec:main_proof}

To prove the first item, we start by defining the function $G: \mathcal{U} \to (0,\infty)$,  via $G(x,t) = \rho_D(\eta_t(x))$, where  $\mathcal{U}$ is defined in Lemma~\ref{lem:flow_convex}\ref{item:bijectivity}. Since $\rho_D$ and $\eta$ are of class $C^2$, we see that $G$ is as well. Additionally, by the definition of $\tau$ we have that $G(x,\tau(r,x)) - r = 0 $ for all $(r,x) \in (0,1] \times \partial D$. The same computation as in \eqref{eq:inward_pointing_estimate} shows that $\partial_tG(x,t) \leq - c_0 < 0$ so by the Implicit Function Theorem it follows that $\tau$ is $C^2$. It is then clear that $H$ is also of class $C^2$ as a composition of the $C^2$ maps $\eta$ and $\tau$.
To obtain the identities in \eqref{eq:inverse_identities} we simply note that $H(r,x) = \eta_t(z)$ with $(z,t) = (x,\tau(r,x)) \in \mathcal{U}$ and that $\rho_D(H(r,x)) = r$ by definition of $\tau$. As such, the bijectivity of the flow map $\eta$ guaranteed by Lemma~\ref{lem:flow_convex}\ref{item:bijectivity} establishes \eqref{eq:inverse_identities} and completes the proof of \ref{item:folding_regularity}.

    We now turn our attention to the SDE system \ref{item:SDE_system}. Assumption~\ref{ass:coefficients_convex} on the coefficients ensures that the drift and dispersion coefficients $(\overline \alpha^0, \overline \sigma^0, \overline \alpha, \overline \sigma)$ are bounded on   $[\varepsilon,2-\varepsilon] \times \partial D$ for every $\varepsilon > 0$. However, the coefficients may blow up as $r = f(|\overline x^0|,x) \downarrow 0$ (see the dynamics \eqref{eq:SDE_ball} for the case of the unit ball). Additionally, as in the proof of Theorem~\ref{thm:interval_Z} the diffusion matrix for the system $(X^0,X)$ is given in block form as
\[A(x^0,x) = \begin{bmatrix}
    \nu(x)^2(1+\|\overline \sigma^0(x^0,x)\|^2) & \nu(x)\overline \sigma^0(x^0,x)^\top \overline \sigma(x^0,x)^\top  \\
    \nu(x)\overline \sigma(x^0,x) \overline \sigma^0(x^0,x) & \overline a(x^0,x)
\end{bmatrix}, \qquad (x^0,x) \in (0,2) \times \partial D.
\]
This diffusion coefficient is uniformly elliptic, but the block off-diagonal entries have the co-dimension one discontinuity set $\{x^0 = 1\}$ due to the appearance of $\sign(\overline x^0)$ in the definition of $\overline \sigma^0$ in \eqref{eq:SDE_system_coefficients}. As such, applying \cite[Remark~3.4]{krylov2004on} locally on $[\varepsilon,2-\varepsilon] \times \partial D$ and then sending $\varepsilon \to 0$ we obtain a unique weak solution to \eqref{eq:SDE_sytem_convex} on the interval $[0,\xi)$, where 
\[\xi = \inf\big\{t \ge 0: X^0_t \in \{0,2\}\big\} = \inf\{t \ge 0: R_t =0\}\]
is the explosion time and $R_t = f(|\overline X_t^0|,X_t)$. Next, we work towards establishing the folding representation of item \ref{item:convex_folding} and postpone establishing that $\xi = \infty$, $\P$-a.s.\ to the end of this proof.

To establish \ref{item:convex_folding}, we show first that $R$ satisfies \eqref{eq:Rt_dynamics} with 
\begin{align}
    \widetilde W & = \int_0^\cdot \sign(\overline X^0_t)\sqrt{1-\big\|\rho\big(f(|\overline X_t^0|,X_t),X_t\big)\big\|^2}\,dB^0_t + \int_0^\cdot \rho\big(f(|\overline X_t^0|,X_t),X_t\big)^\top dB_t \label{eq:Wtilde}\\
    \intertext{and}
    \widetilde \Phi & = - \int_0^\cdot \frac{\widehat \sigma(1,X_t)}{\nu(X_t)} dL_t^1(X^0). \label{eq:Phi_tilde}
\end{align} 
Note that the only integer value $X^0$ can take on $[0,\xi)$ is one, which is why the only local time term in \eqref{eq:Phi_tilde} is $L^1(X^0)$. Since Assumption~\ref{ass:coefficients_convex}  ensures that $\widetilde \sigma(r,\cdot)$ is of class $C^2_b$,  
the claimed dynamics for $R$ will follow as in the derivation of Theorem~\ref{thm:interval_Z} (applied locally on the time interval $[0,\xi)$) once we verify that the condition $\sup_{(r,x) \in (0,1] \times \partial D} \|\rho(r,x)\| < 1$ holds.  To this end, note that 
$\rho(r,x) = P(y)v/\|v\|$, where $y = H(r,x)$ and $v = \sigma(y)^\top \nabla \rho_D(y)$.  Set
\begin{equation} \label{eq:unit_normal}
   n(y):=\frac{\sigma^{-1}(y)\gamma(y)}{\|\sigma^{-1}(y)\gamma(y)\|},
\end{equation}
which is a well-defined unit vector since
$\sigma(y)$ is invertible and $a(y) = \sigma(y)\sigma(y)^\top$ is uniformly elliptic. We claim that 
\begin{equation} \label{eq:projection_identity}
    P(y)^\top P(y)=I_d-n(y)n(y)^\top.
\end{equation}Indeed, from \eqref{eq:projection} we have that
$P(y)^\top P(y)$ is the orthogonal projection onto the row space of
$\sigma_X(y) = J_h(y)\sigma(y)$, which is a $(d-1)$-dimensional space due to the degeneracy of $J_h(y)$ in the direction $\gamma(y)$. As such, we see that 
$n(y)$ is a unit vector orthogonal to that row space because 
\[
    J_h(y) \sigma(y)n(y) 
   =\frac{J_h(y)\gamma(y)}{\|\sigma(y)^{-1}\gamma(y)\|}=0,
\]
by \eqref{eq:constancy_condition} and \eqref{eq:flow_oblique}. This establishes the identity \eqref{eq:projection_identity} from which we obtain, with $y = H(r,x)$, the relationship
\begin{align} 
1 - \|\rho(r,x)\|^2 & = 1- \frac{\nabla \rho_D(y)^\top \sigma(y) P(y)^\top P(y)\sigma(y)^\top\nabla \rho_D(y)}{\nabla \rho_D(y)^\top a(y)\nabla \rho_D(y)} \nonumber \\
& = \frac{(\nabla \rho_D(y)^\top \gamma(y))^2}{\|\sigma^{-1}(y)\gamma(y)\|^2\nabla \rho_D(y)^\top a(y)\nabla \rho_D(y)}. \label{eq:rho_norm} 
\end{align}
From \eqref{eq:gamma_uniform_estimate} we see that $\nabla \rho_D(y)^\top \gamma(y) \le -c_0 < 0$ for all $y$ so that the  numerator in \eqref{eq:rho_norm} is bounded away from zero. Moreover, by  boundedness of $\gamma$ and $a$, uniform ellipticity of $\sigma$, and degree-zero homogeneity of $\nabla \rho_D$ we see that the denominator in \eqref{eq:rho_norm} is bounded, establishing uniform boundedness of $\|\rho(\cdot,\cdot)\|$ away from one. As such, the computations leading to the conclusions of Theorem~\ref{thm:interval_Z} are valid (see the proof of Theorem~\ref{thm:halfline_Z} for the analogous computation) establishing that $R$ satisfies \eqref{eq:Rt_dynamics} on $[0,\xi)$.

Next, we turn to verifying the dynamics of $H(R_t,X_t)$. 
First, by differentiating $H$ given by \eqref{eq:H} with respect to $r$ we have that
\[\partial_r H(r,x) = \dot \eta_{\tau(r,x)}(x)\partial_r\tau(r,x) = \gamma\big(H(r,x)\big)\partial_r\tau(r,x),\]
    where we used that $\eta$ satisfies \eqref{eq:flow_oblique}. To obtain an explicit expression for $\partial_r \tau$ we differentiate both sides of the identity $r = \rho_D(H(r,x))$ to obtain
    \[1 = \nabla \rho_D\big(H(r,x)\big)^\top \partial_r H(r,x) = \nabla \rho_D\big(H(r,x)\big)^\top \gamma\big(H(r,x)\big) \partial_r\tau(r,x).\]
    From these expressions we obtain 
    \begin{equation} \label{eq:drH}
        \partial_r \tau(r,x) = \frac{1}{\nabla \rho_D(H(r,x))^\top \gamma(H(r,x))}, \qquad \partial_r H(r,x) = \frac{\gamma(H(r,x))}{\nabla \rho_D(H(r,x))^\top \gamma(H(r,x))}.
    \end{equation}
    Next, from the expression $y  = H(\rho_D(y),h(y))$ for $y \in D^*$ we can differentiate with respect to $y$ to obtain
    \[I_d = \partial_r H\big(\rho_D(y),h(y)\big)\nabla \rho_D(y)^\top + J_H\big(\rho_D(y),h(y)\big)J_h(y),\]
    where $J_h$ is the Jacobian of $h$ and $J_H$ is the Jacobian of $H$ (in the second component). 
    Evaluating at $y = H(r,x)$ for any $(r,x) \in (0,1] \times \partial D$ gives
    \begin{equation} \label{eq:dxH}
        I_d = \partial_r H(r,x)\nabla \rho_D\big(H(r,x)\big)^\top + J_H(r,x)J_h\big(H(r,x)\big).
    \end{equation}
    By taking second derivatives one can obtain an explicit expression for $\partial_{rr}H$ as well as identities that $\partial_{r}J_H$ and the second-derivative tensor of $H$ in the $x$-variable satisfy. Now applying It\^o's formula to $H^i(R,X)$ for $i=1,\dots,d$ yields
    \begin{equation} \label{eq:dH}
    \begin{aligned}
        dH^i(&R_t,X_t)  = \partial_r H^i(R_t,X_t)dR_t + \nabla_x H^i(R_t,X_t)^\top dX_t + \tfrac{1}{2}\partial_{rr}H^i(R_t,X_t)d[R]_t \\
        & \qquad \qquad + \partial_r \nabla_x H^i(R_t,X_t)^\top d[R,X]_t + \tfrac{1}{2}\mathrm{Tr}\big(\nabla^2 H^i(R_t,X_t)d[X]_t\big) \\
        & = \Big(\partial_r H^i(R_t,X_t)\widetilde b(R_t,X_t) + \nabla_x H^i(R_t,X_t)^\top \alpha_X(R_t,X_t) + \tfrac{1}{2}\partial_{rr}H^i(R_t,X_t)\widetilde \sigma^2(R_t,X_t) \\
        & \qquad + \widetilde \sigma(R_t,X_t)\partial_r \nabla_x H^i(R_t,X_t)^\top \sigma_X^P(R_t,X_t)\rho(R_t,X_t) + \tfrac{1}{2}\mathrm{Tr}\big(\nabla^2H^i(R_t,X_t) a_X^P(R_t,X_t)\big)\Big)dt\\
        & \qquad + \partial_r H^i(R_t,X_t)\widetilde \sigma(R_t,X_t)d\widetilde W_t + \nabla_x H^i(R_t,X_t)^\top  \sigma_X^P(R_t,X_t) dB_t   + \partial_r H^i(R_t,X_t)d\widetilde \Phi_t,
    \end{aligned} 
    \end{equation}
    where $a_X^P = \sigma_X^P(\sigma_X^P)^\top$.
    Next, we obtain a more explicit expression for the local martingale part of $H = (H^1,\dots,H^d)$. Working in matrix form, omitting function evaluations for brevity and recalling the definition of $\widetilde \sigma$ and $ \sigma_X^P$ given by \eqref{eq:widetilde_sigma} and \eqref{eq:sigmaX_bar} respectively, as well as the derivative relationships \eqref{eq:drH} and \eqref{eq:dxH} for $H$, we see from \eqref{eq:dH} that the local martingale part is given by
    \begin{align*} \partial_r H \widetilde \sigma d\widetilde W +J_HJ_h\sigma P^\top dB 
    & = \partial_r H\widetilde \sigma  d\widetilde W + (I_d - \partial_r H \nabla \rho_D^\top)\sigma P^\top dB \\
    &  = \sigma\bigg(\frac{\sigma^{-1}\gamma}{\nabla \rho_D^\top \gamma}\widetilde \sigma d\widetilde W + P^\top dB - \frac{\sigma^{-1}\gamma}{\nabla \rho_D^\top \gamma}\nabla \rho_D^\top \sigma P^\top dB\bigg) \\
    & = \sigma\bigg(\frac{\sigma^{-1}\gamma}{\nabla \rho_D^\top \gamma}\sign(\overline X^0)\widetilde \sigma \sqrt{1-\|\rho\|^2}dB^0+P^\top dB\bigg)\\
    & = \sigma dW.
    \end{align*} In the penultimate equality we expanded out the formula for $\widetilde W$ in \eqref{eq:Wtilde}, and in the final equality we recognized that the given expression is precisely $W$ as defined in \eqref{eq:W_convex}. Note that $W$ is indeed a Brownian Motion by L\'evy's characterization, since its quadratic variation is 
    \begin{align*}
        \frac{d[W]_t}{dt} = \frac{\sigma^{-1}\gamma\gamma^\top \sigma^{-T}}{(\nabla \rho_D^\top \gamma)^2}\widetilde \sigma^2(1-\|\rho\|^2) + P^\top P = \frac{\sigma^{-1}\gamma\gamma^\top \sigma^{-T}}{\|\sigma^{-1}\gamma\|^2} + I_d - nn^\top = I_d,
    \end{align*}
    where we used that $1-\|\rho\|^2$ is given by \eqref{eq:rho_norm} and the projection matrix term is given by \eqref{eq:projection} with $n$ given by \eqref{eq:unit_normal}. As such, we see that the dispersion coefficient of $H(R,X)$ is given by $\sigma(H(R,X))$. Similar computations using first- and second-order identities involving derivatives of $H$, which we omit here, show that the drift terms given in \eqref{eq:dH} reduce to $b(H(R,X))$. Finally, for the reflecting term we note from \eqref{eq:Phi_tilde}, \eqref{eq:drH} and  \eqref{eq:dH} that 
    \begin{align*}
        \partial_r H(R_t,X_t)d\widetilde \Phi_t & =  -\frac{\widehat \sigma(1,X_t)\gamma(H(1,X_t))}{\nu(X_t)\nabla \rho_D(H(1,X_t))^\top \gamma(H(1,X_t))}
  dL_t^1(X^0) = d\Phi_t,
    \end{align*}
    where $\Phi$ is given by \eqref{eq:Phi_convex} and we used the fact that $X_t^0 = 1 \implies R_t = 1$ to replace all instances of $R_t$ with one. Note that $\Phi$ satisfies the inward-pointing condition specified in \eqref{eq:Phi_oblique} since $\nabla \rho_D(y)^\top \gamma(y) < 0$ for every $y \in \partial D$ as prescribed by \eqref{eq:gamma_inward}.

    As such, we see that $Y = H(R,X)$ satisfies the RSDE \eqref{eq:RSDE} with oblique reflection \eqref{eq:Phi_oblique} on the time interval $[0,\xi)$. However, by the definition of $H$ in \eqref{eq:H}, the definition of $\tau$ in \eqref{eq:Y_reconstruct} and the fact that the flow $\eta_t(x) \to 0 \iff t \uparrow T(x)$ guaranteed by Lemma~\ref{lem:flow_convex}\ref{item:convex_flow}, we see that $Y_t = 0 \iff R_t = 0$. As such, the explosion time has the representation $\xi = \inf\{t \ge 0: Y_t = 0\}$. By Lemma~\ref{lem:polar} we see that this event has probability zero establishing that $\xi = \infty$, $\P$-a.s. This completes the proof of \ref{item:SDE_system} and also shows that $Y = H(R,X)$ is a global solution to the RSDE \eqref{eq:RSDE} with oblique reflection \eqref{eq:Phi_oblique}, completing the proof of the theorem.

\end{document}